\newtheorem{theorem}{Theorem}
\newtheorem{lemma}[theorem]{Lemma}
\newtheorem{corollary}[theorem]{Corollary}
\newtheorem{proposition}[theorem]{Proposition}
\newtheorem{example}[theorem]{Example}
\newtheorem{remark}[theorem]{Remark}
\newcommand{\tto}{\twoheadrightarrow}
\font\sc=rsfs10
\newcommand{\cC}{\sc\mbox{C}\hspace{1.0pt}}
\newcommand{\cQ}{\sc\mbox{Q}\hspace{1.0pt}}
\newcommand{\cS}{\sc\mbox{S}\hspace{1.0pt}}
\newcommand{\cA}{\sc\mbox{A}\hspace{1.0pt}}
\newcommand{\cX}{\sc\mbox{X}\hspace{1.0pt}}
\font\scc=rsfs7
\newcommand{\ccC}{\scc\mbox{C}\hspace{1.0pt}}
\newcommand{\ccA}{\scc\mbox{A}\hspace{1.0pt}}
\newcommand{\ccS}{\scc\mbox{S}\hspace{1.0pt}}
\newcommand{\ccQ}{\scc\mbox{Q}\hspace{1.0pt}}
\newcommand{\ccX}{\scc\mbox{X}\hspace{1.0pt}}
\begin{document}
\title[Analogues of centralizer subalgebras for fiat $2$-categories]
{Analogues of centralizer subalgebras for fiat $2$-categories and their $2$-representations}

\author{Marco Mackaay, Volodymyr Mazorchuk,\\ Vanessa Miemietz and Xiaoting Zhang}

\begin{abstract}
The main result of this paper establishes a bijection between the set of equivalence 
classes of simple transitive $2$-representations with
a fixed apex $\mathcal{J}$ of a fiat $2$-category $\cC$ and the set of equivalence 
classes of faithful simple transitive $2$-representations of the fiat $2$-subquotient of
$\cC$ associated with a diagonal $\mathcal{H}$-cell in $\mathcal{J}$.
As an application, we classify simple transitive $2$-representations of
various categories of Soergel bimodules, in particular, completing
the classification in types $B_3$ and $B_4$.
\end{abstract}

\maketitle

\section{Introduction and description of the results}\label{s1}

Let $\Bbbk$ be an algebraically closed field, $A$ a finite dimensional unital $\Bbbk$-algebra and 
$B$ a subalgebra of $A$, which is also assumed to be unital, however, whose unit is
not necessarily the same as in $A$. In this generality, there is no obvious relation 
between the categories $A$-mod and $B$-mod of finite dimensional $A$- and $B$-modules,
respectively. However, if we additionally assume that $B$ is the centralizer subalgebra 
of an idempotent $e$ in $A$, that is $B=eAe$, then the functor $Ae\otimes_{B}{}_-$
provides a full embedding of $B$-mod into $A$-mod.

Finitary $2$-categories, introduced in \cite{MM1}, are higher representation theoretic 
analogues of finite-dimensional algebras, where ``higher representation theory'' refers 
to the $2$-upgrade of the classical representation theory which originated in \cite{BFK,CR,KL,Ro}.
Foundations of $2$-representation theory of finitary $2$-categories were laid 
in the series \cite{MM1,MM2,MM3,MM4,MM5,MM6} and then various further developments appeared in 
\cite{CM,KMMZ,MMMT,MaMa,MT,MMZ1,MMZ2,MZ1,MZ2,Xa,Zh1,Zh2,Zi1,Zi2}, see \cite{Ma1,Ma2} for overviews.

The basic classification problem in higher representation theory is that of 
{\em simple transitive} $2$-representations of a given $2$-category $\cC$, as defined
in \cite{MM5}. These simple transitive $2$-representations are appropriate higher analogues of
simple modules over algebras. It turns out that in many cases 
(see \cite{MM5,MM6,GM1,GM2,Zh2,Zi1,Zi2,MZ1,MZ2,MMZ1,MMZ2} and \cite{Ma2} for an overview) 
simple transitive $2$-representations are exhausted by the class of {\em cell $2$-representations} 
which were defined already in \cite{MM1} using the intrinsic combinatorial structure of $\cC$. 
However, in many cases, the situation is more complicated or
the answer is not known.
Some elementary  examples of $2$-categories which have simple transitive 
$2$-representations that are not cell $2$-representations appear already in \cite{MM5}.
The first non-elementary example, a certain subquotient of the $2$-category of Soergel 
bimodules (over the coinvariant algebra) of type $B_2$, was considered 
and completed in \cite{MaMa}. There, both the construction 
and the classification rely on a delicate and technically very involved
analysis of the behavior of $2$-natural transformations and modifications related to a 
cell $2$-representation. The technique was later applied in \cite{KMMZ} to classify 
simple transitive $2$-representations for all {\em small quotients} of Soergel bimodules
associated to finite Coxeter systems. It turns out that simple transitive $2$-representations
which are not cell $2$-representations appear in all types $I_2(n)$, with $n>4$ even.

In the present paper we observe that the abstract results of \cite{MMMT} can, in some cases, be
applied to relate $2$-representations of a $2$-finitary category $\cC$ and a finitary $2$-subcategory
$\cA$ of $\cC$ of a certain kind. Very loosely, $\cA$ should be such that it
can be considered as a vague analogue of a ``centralizer subalgebra''
in $\cC$. Our analogy goes as follows: in the classical setup $eAe$-proj is equivalent to the 
idempotent split additive subcategory $\mathrm{add}(Ae)$ of $A$-proj. In our setup, 
$\cA$ is a $2$-subcategory of $\cC$ given as the ``additive span'' of some indecomposable
$1$-morphisms in $\cC$. One major complication in the $2$-setup compared to the classical case
is the absence of any kind of general induction procedure. In classical representation theory, 
one can always use induction given by the tensor product. So far, there is no reasonable general 
analogue of the tensor product in higher representation theory. We go around this difficulty and
use the approach of \cite{MMMT} to $2$-representations based on the study of (co)algebra 
$1$-morphisms
in the abelianization of $\cC$. This approach is based on ideas from \cite{EGNO,ENO,EO,Os}. 
Our main result is Theorem~\ref{thmmain} which can roughly be formulated as follows
(we refer the reader to the text for the precise formulation).

{\bf Main theorem.} For a fiat $2$-category $\cC$ and its $2$-subcategory $\cA$ as above,
there is a bijection between certain classes of simple transitive $2$-representations of
$\cC$ and $\cA$.

On the surface, this result carries a remarkable resemblance with the abstract 
classification of  simple modules over finite semigroups, see \cite{Cl,Mu,Po,GMS}.

Our main theorem has various immediate applications which work in both directions
(that is in the direction from $\cC$ to $\cA$ and 
in the direction from $\cA$ to $\cC$). In Section~\ref{s3} we use the
direction ``from $\cC$ to $\cA\,$'' to classify simple transitive $2$-representations of
certain subquotients of Soergel bimodules in finite Coxeter types, both simplifying and extending
the results of \cite{MaMa} and \cite{KMMZ}. In Section~\ref{s4} we use the
direction ``from $\cA$ to $\cC\,$'' to classify certain simple transitive $2$-representations 
of Soergel bimodules (over the coinvariant algebra) in finite Coxeter types, extending
the results of \cite{KMMZ} and, in particular, completing the classification in 
types $B_3$ and $B_4$. All preliminaries are collected in Section~\ref{s2}.
Section~\ref{snew1} contains some basic technical results on comodule categories over 
coalgebra  $1$-morphisms and functors between them.

\textbf{Acknowledgements:} 
The first author is partially supported by FCT/Portugal through project UID/MAT/04459/2013.
The second author is partially supported by
the Swedish Research Council, G{\"o}ran Gustafsson Stiftelse and Vergstiftelse. 
The fourth author is supported by G{\"o}ran Gustafsson Stiftelse. 

A part of this research was done during the visit, in October-November 2017, 
of the fourth author to the University of East Anglia, whose hospitality is gratefully acknowledged.
This visit was supported by Lisa och Carl-Gustav Esseens Stiftelse.
A part of this research was done during the visit, in November 2017, 
of all authors to the Hausdorff Institute of Mathematics in Bonn, 
whose hospitality and support are gratefully acknowledged.

We also thank Tobias Kildetoft for his help regarding the computation 
of cells in types $B_3$, $B_4$ and $B_5$.

\section{Preliminaries in abstract $2$-representation theory}\label{s2}

\subsection{$2$-categories and $2$-representations}\label{s2.1}

We refer to \cite{Le,McL} for generalities and terminology on $2$-categories.
We refer to \cite{MM1,MM2,MM3,MM4,MM5,MM6} for all terminology concerning 
$2$-representations of finitary $2$-categories. We recall some terminology below.

Following \cite{MM1}, a {\em finitary} $2$-category $\cC$ is a $2$-category such that
\begin{itemize}
\item it has finitely many objects;
\item each $\cC(\mathtt{i},\mathtt{j})$ is equivalent to $A_{\mathtt{i},\mathtt{j}}$-proj,
for some finite dimensional, unital, associative $\Bbbk$-algebra $A_{\mathtt{i},\mathtt{j}}$;
\item all compositions are (bi)additive and (bi)linear whenever applicable;
\item all identity $1$-morphisms are indecomposable.
\end{itemize}
Normally, we denote by $\circ_{\mathrm{h}}$ and $\circ_{\mathrm{v}}$ the horizontal and vertical compositions in $\cC$, respectively.
However, in Section~\ref{snew1} we will often suppress the notation $\circ_{\mathrm{h}}$ to save space in diagrams.
For the same reason, we will deviate from the notation of the \cite{MM1}-\cite{MM6} series
and denote by $1_{\mathrm{X}}$ the identity $2$-morphism of
a $1$-morphism $\mathrm{X}$.

A {\em finitary} $2$-representation of $\cC$ is a functorial action of $\cC$ on 
categories equivalent to $A$-proj, where $A$ is a finite dimensional, unital,
associative $\Bbbk$-algebra.
A typical example of a finitary $2$-rep\-re\-sen\-ta\-ti\-on is the {\em principal}
$2$-representation $\mathbf{P}_{\mathtt{i}}:=\cC(\mathtt{i},{}_-)$, where 
$\mathtt{i}\in\cC$. 
An {\em abelian} $2$-representation of $\cC$ is a functorial action of $\cC$ on 
categories  equivalent to $A$-mod, where $A$ is a finite dimensional, unital, associative $\Bbbk$-algebra.
For simplicity, given any $2$-representation $\mathbf{M}$, we will use the action notation $\mathrm{F}\,X$ instead of 
the representation notation $\mathbf{M}(\mathrm{F})(X)$.

Both finitary and abelian $2$-representations of $\cC$ form $2$-categories,
denoted $\cC$-afmod and $\cC$-mod, respectively, in which $1$-morphisms are strong 
$2$-natural transformations and $2$-morphisms are modifications, see \cite{MM3} for details. 
These two $2$-categories are connected by the diagrammatic {\em abelianization} $2$-functor
$\overline{\,\cdot\,}:\cC\text{-}\mathrm{afmod}\to \cC\text{-}\mathrm{mod}$,
see \cite[Subsection~4.2]{MM2}.

Let $\mathbf{M}\in\cC$-afmod. We say that $\mathbf{M}$ is {\em transitive}, see \cite[Subsection~3.1]{MM5},
provided that, for any $\mathtt{i},\mathtt{j}\in\cC$ and any indecomposable $X\in\mathbf{M}(\mathtt{i})$
and $Y\in\mathbf{M}(\mathtt{j})$, there exists $\mathrm{F}\in\cC(\mathtt{i},\mathtt{j})$ such that 
$Y$ is isomorphic to a summand of $\mathrm{F}\,X$. A transitive $2$-representation $\mathbf{M}$
is called {\em simple transitive}, see \cite[Subsection~3.5]{MM5}, if
$\mathbf{M}$ does not have any non-zero proper $\cC$-invariant ideals.
By \cite[Section~4]{MM5}, every $\mathbf{M}\in\cC$-afmod has a {\em weak Jordan-H{\"o}lder series}
with simple transitive subquotients which are defined uniquely up to permutation and equivalence.

If $\mathbf{M}$ is a $2$-representation of $\cC$, then the {\em rank} of $\mathbf{M}$ is the number of
isomorphism classes of indecomposable objects in $\displaystyle\coprod_{\mathtt{i}\in\ccC}\mathbf{M}(\mathtt{i})$.
Let $X_1,X_2,\dots,X_n$ be a fixed list of representatives of these isomorphism classes 
(in particular, $n$ is the rank of $\mathbf{M}$). To each $1$-morphism $\mathrm{F}$ one associates an 
$n\times n$-matrix $[\mathrm{F}]=(m_{i,j})$, where $m_{i,j}$ records the multiplicity of $X_i$ as a
summand of $\mathrm{F}\, X_j$.

\subsection{Cells and cell $2$-representations}\label{s2.2}

For indecomposable $1$-morphisms $\mathrm{F}$ and $\mathrm{G}$ in $\cC$, we write
$\mathrm{F}\geq_L \mathrm{G}$ provided that $\mathrm{F}$ is isomorphic to a direct summand of 
$\mathrm{H}\mathrm{G}$, for some $1$-morphism $\mathrm{H}$. The relation $\geq_L$
is called the {\em left} pre-order on the (finite) set of isomorphism classes of 
indecomposable $1$-morphisms in $\cC$. The {\em right} pre-order $\geq_R$ and the
{\em two-sided} pre-order $\geq_J$ are defined similarly using $\mathrm{G}\mathrm{H}$
and $\mathrm{H}_1\mathrm{G}\mathrm{H}_2$, respectively. The equivalence classes for 
these pre-orders are called the respective {\em cells}. We refer to \cite[Section~3]{MM2} 
for more details.

A two-sided cell is {\em regular} provided that all left (resp. right) cells
inside this two-sided cell are not comparable with respect to the left (resp. right) order.
A regular two-sided cell is called {\em strongly regular} if
the intersection of each left and each right cell inside this two-sided cell contains exactly
one isomorphism class of indecomposable $1$-morphisms.

Given a left cell $\mathcal{L}$ in $\cC$, there is $\mathtt{i}\in\cC$ such that all
$1$-morphisms in $\mathcal{L}$ start at $\mathtt{i}$. The additive closure, in $\coprod_{\mathtt{j}\in\ccC}\mathbf{P}_{\mathtt{i}}(\mathtt{j})$,
of all $1$-morphisms $\mathrm{F}$ satisfying $\mathrm{F}\geq_L\mathcal{L}$ is 
$\cC$-stable and hence gives a $2$-representation of $\cC$, denoted $\mathbf{N}_{\mathcal{L}}$.
The $2$-representation $\mathbf{N}_{\mathcal{L}}$ contains a unique ideal, denoted $\mathbf{I}_{\mathcal{L}}$, 
which is maximal in the set of all proper $\cC$-stable ideals of $\mathbf{N}_{\mathcal{L}}$. The
quotient $\mathbf{N}_{\mathcal{L}}/\mathbf{I}_{\mathcal{L}}$ is called the cell $2$-representation of 
$\cC$ associated to $\mathcal{L}$ and denoted $\mathbf{C}_{\mathcal{L}}$. Indecomposable 
objects of this $2$-representation are in natural bijection with indecomposable $1$-morphisms in 
$\mathcal{L}$. We refer to \cite[Subsection~6.5]{MM2}  for more details.

If $\mathbf{M}$ is any transitive $2$-representation of $\cC$, then there is a unique two-sided
cell, called the {\em apex} of $\mathbf{M}$, which is maximal, with respect to the two-sided order,
in the set of all two-sided cells whose elements are not annihilated by $\mathbf{M}$, 
see \cite[Subsection~3.2]{CM} for details.

\subsection{Fiat $2$-categories}\label{s2.3}

A finitary $2$-category $\cC$ is called {\em fiat} provided that it has a weak involutive 
anti-equivalence $\star$ (reversing the direction of both $1$- and $2$-morphisms) such that, for every
$1$-morphism $\mathrm{F}$, the pair $(\mathrm{F},\mathrm{F}^{\star})$ is an adjoint pair 
of $1$-morphisms. We refer to \cite[Subsection~2.4]{MM1}  for more details.

\subsection{Special modules for positively based algebras}\label{s2.51}

Here we assume that our field $\Bbbk$ is a subfield of $\mathbb{C}$. Let $A$ be a $\Bbbk$-algebra 
of dimension $n<\infty$ with a fixed basis $\mathbf{B}=\{1=a_1,\dots,a_n\}$. 
We say that $\mathbf{B}$ is {\em positive} if all structure constants with respect to this
basis are non-negative real numbers. For $i,j\in\{1,2,\dots,n\}$, set $a_i\leq_L a_j$ provided that
there is some $a_s$ such that $a_sa_i$ has a non-zero coefficient at $a_j$. This is the {\em left}
preorder on $\mathbf{B}$. Equivalence classes with respect to it are called {\em left cells}.
The right and two-sided preorders and cells are defined similarly using multiplication on the right
and on both sides, respectively.

An $A$-module $V$ with a fixed basis $\{v_1,\dots,v_k\}$ is said to be {\em positively based} provided
that the matrix of each $a_i$ in this basis has non-negative real coefficients and the
matrix of $a_1+a_2+\dots+ a_n$ has positive real coefficients. Each positively based
module has a unique {\em special} simple subquotient, namely the one which contain the 
Perron-Frobenius eigenvector for $a_1+a_2+\dots+ a_n$. Special simple modules for
positively based algebras correspond to non-nilpotent two-sided cells and 
are classified in \cite{KM}. We refer to \cite{KM} for more details.

\section{Coalgebra $1$-morphisms and simple transitivity of $2$-representations}\label{snew1}

\subsection{(Co)algebra $1$-morphisms and their (co)module categories}\label{s2.4}

Let $\cC$ be a finitary $2$-category. The paper \cite{MMMT} introduces certain, very technical,
injective and projective abelianizations  $\underline{\cC}$ and $\overline{\cC}$, respectively.
They have natural structures of $2$-categories extending the one on $\cC$. The main results of 
\cite{MMMT} require the assumption that $\cC$ is fiat, so we will also assume this from now on.
In case $\cC$ is fiat, its action on both $\underline{\cC}$ and $\overline{\cC}$ is given by
exact functors. As every object in $\underline{\cC}$ (resp. $\overline{\cC}$) is isomorphic to
a kernel (resp. cokernel) of a morphism between two objects in $\cC$, the snake lemma implies
that the action of $\underline{\cC}$ (resp. $\overline{\cC}$) on itself is left (resp. right) exact.

A {\em coalgebra $1$-morphism}  in $\underline{\cC}$  is a triple $(\mathrm{C},\Delta_{\mathrm{C}},\varepsilon)$, where 
$\mathrm{C}$ is a $1$-morphism and $\Delta_{\mathrm{C}}:\mathrm{C}\to\mathrm{C} \mathrm{C}$ 
and $\varepsilon: \mathrm{C}\to\mathbbm{1}$ are $2$-morphisms which satisfy the usual axioms for the comultiplication and counit of a coalgebra. Each coalgebra $1$-morphism $\mathrm{C}$ 
in $\underline{\cC}$ comes together with the category $\mathrm{comod}_{\underline{\ccC}}(\mathrm{C})$
of (right) $\mathrm{C}$-comodule $1$-morphisms in $\underline{\cC}$ and its full subcategory $\mathrm{inj}_{\underline{\ccC}}(\mathrm{C})$
of injective objects. The latter is a finitary (left) $2$-representation of $\cC$.
We also have the  category $(\mathrm{C})\mathrm{comod}_{\underline{\ccC}}$
of left $\mathrm{C}$-comodule $1$-morphisms in $\underline{\cC}$. 

Dually one defines {\em algebra $1$-morphisms} $(\mathrm{A},\mu,\eta)$ in $\overline{\cC}$
and the corresponding categories $\mathrm{mod}_{\overline{\ccC}}(\mathrm{A})$
and $(\mathrm{A})\mathrm{mod}_{\overline{\ccC}}$
of $\mathrm{A}$-module $1$-morphisms in $\overline{\cC}$ and its 
full subcategory $\mathrm{proj}_{\overline{\ccC}}(\mathrm{A})$
of projective objects. 
We refer to \cite{MMMT,EGNO} for further details.

\begin{lemma}\label{lemnnn1}
{\hspace{2mm}}

\begin{enumerate}[$($i$)$]
\item\label{lemnn1} The categories $\mathrm{comod}_{\underline{\ccC}}(\mathrm{C})$
and $(\mathrm{C})\mathrm{comod}_{\underline{\ccC}}$  are abelian subcategories of $\underline{\cC}$. 
\item\label{lemnn2} The categories $\mathrm{mod}_{\overline{\ccC}}(\mathrm{A})$
and $(\mathrm{A})\mathrm{mod}_{\overline{\ccC}}$  are abelian subcategories of $\overline{\cC}$. 
\end{enumerate}
\end{lemma}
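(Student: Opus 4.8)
The plan is to treat all four categories uniformly. Each is an additive, \emph{non-full} subcategory of the underlying category of the relevant abelianization (its objects are the (co)module $1$-morphisms and its morphisms are those $2$-morphisms that intertwine the (co)actions), and the task is to show that it is closed under the formation of kernels and cokernels taken in $\underline{\cC}$ (resp. $\overline{\cC}$), with the (co)module structure transported along, and then to deduce the abelian axioms from those already valid in the ambient Hom-categories. I would spell out the case of $\mathrm{comod}_{\underline{\ccC}}(\mathrm{C})$ in part~\ref{lemnn1} and invoke symmetry for the rest.

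First I would record the one structural input. Since $\cC$ is fiat, every $1$-morphism of $\cC$ admits a two-sided adjoint, so horizontal composition with a $1$-morphism of $\cC$ is exact on either side. Writing an arbitrary object of $\underline{\cC}$ as a kernel of a morphism between objects of $\cC$ and running the snake-lemma argument recalled just before the lemma in each variable separately, one gets that both $\mathrm{C}\circ_{\mathrm{h}}-$ and $-\circ_{\mathrm{h}}\mathrm{C}$ are left exact on $\underline{\cC}$; dually, the analogous functors on $\overline{\cC}$ are right exact. This is the only nonformal fact the argument uses.

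Next, for a morphism $f\colon\mathrm{M}\to\mathrm{N}$ of right $\mathrm{C}$-comodules I would build coactions on $\mathrm{K}=\ker f$ and $\mathrm{Q}=\mathrm{coker}\,f$ formed in the ambient Hom-category. For $\mathrm{K}$, the composite of the inclusion $\mathrm{K}\hookrightarrow\mathrm{M}$ with $\rho_{\mathrm{M}}$ is annihilated by the map $\mathrm{M}\mathrm{C}\to\mathrm{N}\mathrm{C}$ induced by $f$; left exactness identifies $\mathrm{K}\mathrm{C}$ with the kernel of that map, so the composite factors uniquely through $\mathrm{K}\mathrm{C}\hookrightarrow\mathrm{M}\mathrm{C}$, defining $\rho_{\mathrm{K}}$. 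The monomorphism $\mathrm{K}\mathrm{C}\hookrightarrow\mathrm{M}\mathrm{C}$ is exactly what allows one to cancel and verify the counit and coassociativity axioms for $\rho_{\mathrm{K}}$ together with the universal property of $\mathrm{K}$ among comodules. For $\mathrm{Q}$ no exactness is needed: the composite of $\rho_{\mathrm{N}}$ with the map $\mathrm{N}\mathrm{C}\to\mathrm{Q}\mathrm{C}$ induced by the projection $p\colon\mathrm{N}\to\mathrm{Q}$ kills $f$, so the universal property of the cokernel supplies a unique $\rho_{\mathrm{Q}}\colon\mathrm{Q}\to\mathrm{Q}\mathrm{C}$ through which it factors, and since $p$ is epic all the required identities descend. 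Thus $\mathrm{comod}_{\underline{\ccC}}(\mathrm{C})$ has kernels and cokernels, and the forgetful functor $\mathrm{U}$ to $\underline{\cC}$ preserves them.

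Finally I would assemble the abelian axioms. The functor $\mathrm{U}$ is additive and faithful, hence reflects zero objects; combined with preservation of kernels and cokernels this yields that a comodule morphism is monic (resp. epic) if and only if it is so in $\underline{\cC}$. Given a monic $f$, it is the kernel of its cokernel in $\underline{\cC}$; lifting that cokernel to $\mathrm{comod}_{\underline{\ccC}}(\mathrm{C})$ as above and computing the kernel of the lift (again in $\underline{\cC}$, the coaction of $\mathrm{M}$ being recovered by uniqueness) exhibits $f$ as a kernel in the comodule category, and dually every epimorphism is a cokernel. Hence $\mathrm{comod}_{\underline{\ccC}}(\mathrm{C})$ is abelian and its inclusion into $\underline{\cC}$ is exact. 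The category $(\mathrm{C})\mathrm{comod}_{\underline{\ccC}}$ is handled identically with $\mathrm{C}\circ_{\mathrm{h}}-$ in place of $-\circ_{\mathrm{h}}\mathrm{C}$, and the two cases in part~\ref{lemnn2} follow by reversing every arrow: for modules in $\overline{\cC}$ the kernel is the construction needing no exactness, while the cokernel is the one relying on right exactness of horizontal composition on $\overline{\cC}$. I expect the only genuinely delicate step to be the transport of the comodule structure to the kernel, as this is the unique place where left exactness of composition with $\mathrm{C}$ is indispensable; everything else is formal once the exactness of horizontal composition is in hand.
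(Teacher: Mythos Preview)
Your proposal is correct and follows the same approach as the paper: both arguments construct the induced coaction on the kernel by using left exactness of $-\circ_{\mathrm{h}}\mathrm{C}$ on $\underline{\cC}$ to identify $\mathrm{K}\mathrm{C}$ with $\ker(f\,1_{\mathrm{C}})$, and obtain the coaction on the cokernel from the universal property of $\mathrm{Q}$ alone. The paper's proof is terser: it declares that ``the only non-obvious thing to check is the existence of kernels and cokernels'' and stops after producing the dotted arrows in the obvious diagram, whereas you go on to verify explicitly that monics are kernels and epics are cokernels via the faithful exact forgetful functor---a point the paper leaves implicit.
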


\begin{proof}
The only non-obvious thing to check is the existence of kernels and cokernels. We do this for
claim~\eqref{lemnn1} and the dual construction works for claim~\eqref{lemnn2}.
Let $(\mathrm{M},\delta_{\mathrm{M}})$ and $(\mathrm{N},\delta_{\mathrm{N}})$ be two $\mathrm{C}$-comodule $1$-morphisms
and $\varphi:\mathrm{M}\to \mathrm{N}$ a homomorphism of comodule $1$-morphisms. Then we have the commutative
diagram
\begin{displaymath}
\xymatrix{
\mathrm{Ker}(\varphi)\ar@{^{(}->}[rr]^{\xi}\ar@{.>}[d]&&
\mathrm{M}\ar[rr]^{\varphi}\ar[d]_{\delta_{\mathrm{M}}}&&
\mathrm{N}\ar[d]_{\delta_{\mathrm{N}}}\ar@{->>}[rr]^{\zeta}&&\mathrm{Coker}(\varphi)\ar@{.>}[d]\\
\mathrm{Ker}(\varphi)\mathrm{C}\ar@{^{(}->}[rr]^{\xi1_{\mathrm{C}}}
&&\mathrm{M}\mathrm{C}\ar[rr]^{\varphi1_{\mathrm{C}}}&&
\mathrm{N}\mathrm{C}\ar[rr]^{\zeta1_{\mathrm{C}}}&&\mathrm{Coker}(\varphi)\mathrm{C}
}
\end{displaymath}
where the middle square commutes by definition, the upper row is exact and the 
lower row is left exact, since the action of $\underline{\cC}$ is left exact by construction. 
Using the universal property of kernels and cokernels,
we obtain the outer commuting squares that are induced by the middle one.
From this diagram it is easy to see that the kernel and the cokernel of $\varphi$ inherit the
structure of $\mathrm{C}$-comodule $1$-morphisms, as claimed.
\end{proof}

Theorem~9 in \cite{MMMT} asserts that, for every transitive $2$-representation $\mathbf{M}$ of a fiat
$2$-category $\cC$, there is a coalgebra $1$-morphism $\mathrm{C}$ in $\underline{\cC}$ such that 
$\mathbf{M}$ is equivalent to the $2$-representation $\mathrm{inj}_{\underline{\ccC}}(\mathrm{C})$ of $\cC$. Our main aim of this section is to prove that this correspondence matches
simple transitive $2$-representations and cosimple coalgebra $1$-morphisms, see Corollary~\ref{simple}.

\subsection{Simple transitive quotients and embeddings of coalgebra $1$-morphisms}\label{snew1.1}

Let $\cC$ be a fiat $2$-category.
Let $\mathbf{M}$ be a transitive $2$-representation of $\cC$
and $\mathbf{N}$ its simple transitive quotient.  Denote by 
$\pi$ the natural morphism $\mathbf{M} \twoheadrightarrow \mathbf{N}$.

Let $X$ be a non-zero object in $\mathbf{M}(\mathtt{i})$ (hence also, automatically,  
a non-zero object in $\mathbf{N}(\mathtt{i})$ as well). Denote by 
$\mathrm{C}=(\mathrm{C}_X,\Delta_{\mathrm{C}},\varepsilon_{\mathrm{C}})$ and 
$\mathrm{D}=(\mathrm{D}_X,\Delta_{\mathrm{D}},\varepsilon_{\mathrm{D}})$ 
the corresponding coalgebra $1$-morphisms in $\underline{\cC}$, given by
$$\mathrm{Hom}_{\underline{\mathbf{M}}(\mathtt{i})}(X, \mathrm{F}\, X) \cong  \mathrm{Hom}_{\underline{\ccC}}(\mathrm{C}, \mathrm{F}) \quad \hbox{
and } \quad\mathrm{Hom}_{\underline{\mathbf{N}}(\mathtt{i})}(X, \mathrm{F}\, X) \cong  \mathrm{Hom}_{\underline{\ccC}}(\mathrm{D}, \mathrm{F}),$$
respectively, for all $1$-morphisms $\mathrm{F} \in \underline{\cC}$.
We denote by $\mathrm{coev}_{X,X}^{\mathbf{M}}$ and $\mathrm{coev}_{X,X}^{\mathbf{N}}$
the image of $1_{\mathrm{C}}$ and $1_{\mathrm{D}}$ under the isomorphisms
$$\mathrm{Hom}_{\underline{\mathbf{M}}(\mathtt{i})}(X, \mathrm{C}\, X) \cong  \mathrm{Hom}_{\underline{\ccC}}(\mathrm{C}, \mathrm{C})\quad \hbox{
and } \quad\mathrm{Hom}_{\underline{\mathbf{N}}(\mathtt{i})}(X, \mathrm{D}\, X) \cong  \mathrm{Hom}_{\underline{\ccC}}(\mathrm{D}, \mathrm{D}),$$
respectively.

\begin{lemma}\label{alphabeta}
There exist $2$-morphisms $\alpha\colon \mathrm{D}\to \mathrm{C}$ and $\beta\colon \mathrm{C}\to \mathrm{D}$ in $\underline{\cC}$ such that the equality $\beta\alpha = 1_{\mathrm{D}}$ holds. 
In particular, $\alpha$ is a monomorphism.
\end{lemma}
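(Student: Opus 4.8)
The plan is to read off $\alpha$ and $\beta$ from the two coevaluation $2$-morphisms together with the defining isomorphisms, and then to verify $\beta\alpha=1_{\mathrm{D}}$ by transporting both sides across the isomorphism $\mathrm{Hom}_{\underline{\ccC}}(\mathrm{D},\mathrm{D})\cong\mathrm{Hom}_{\underline{\mathbf{N}}(\mathtt{i})}(X,\mathrm{D}\,X)$, under which $1_{\mathrm{D}}$ corresponds to $\mathrm{coev}^{\mathbf{N}}_{X,X}$. Throughout I use the explicit form of these isomorphisms: a $2$-morphism $g\colon\mathrm{D}\to\mathrm{F}$ corresponds to $g^{\mathbf{N}}_{X}\circ\mathrm{coev}^{\mathbf{N}}_{X,X}$, where $g^{\mathbf{N}}_X\colon\mathrm{D}\,X\to\mathrm{F}\,X$ denotes the action of $g$ on $X$ in $\underline{\mathbf{N}}(\mathtt{i})$ (and similarly for $\mathrm{C}$ and $\mathbf{M}$, with notation $g^{\mathbf{M}}_X$). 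Since $\pi$ is a morphism of $2$-representations it intertwines the two actions and fixes $X$, so that $\pi(g^{\mathbf{M}}_X)=g^{\mathbf{N}}_X$ for every such $g$.

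First I would construct $\alpha$. Applying $\pi$ to $\mathrm{coev}^{\mathbf{M}}_{X,X}\colon X\to\mathrm{C}\,X$ gives a morphism $\pi(\mathrm{coev}^{\mathbf{M}}_{X,X})\colon X\to\mathrm{C}\,X$ in $\underline{\mathbf{N}}(\mathtt{i})$; I let $\alpha\colon\mathrm{D}\to\mathrm{C}$ be the $2$-morphism corresponding to it under $\mathrm{Hom}_{\underline{\mathbf{N}}(\mathtt{i})}(X,\mathrm{C}\,X)\cong\mathrm{Hom}_{\underline{\ccC}}(\mathrm{D},\mathrm{C})$. By the description above, $\alpha$ is characterised by
$$\alpha^{\mathbf{N}}_X\circ\mathrm{coev}^{\mathbf{N}}_{X,X}=\pi(\mathrm{coev}^{\mathbf{M}}_{X,X}).$$

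To construct $\beta$, I would lift the coevaluation of $\mathbf{N}$ along $\pi$. As $\mathbf{N}$ is the simple transitive quotient of $\mathbf{M}$, the morphism $\pi$ is the quotient by a $\cC$-stable ideal and is therefore full; assuming this fullness is inherited by the induced functor $\underline{\mathbf{M}}(\mathtt{i})\to\underline{\mathbf{N}}(\mathtt{i})$ on abelianizations, the map $\pi\colon\mathrm{Hom}_{\underline{\mathbf{M}}(\mathtt{i})}(X,\mathrm{D}\,X)\to\mathrm{Hom}_{\underline{\mathbf{N}}(\mathtt{i})}(X,\mathrm{D}\,X)$ is surjective. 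I then pick $\widetilde{\gamma}$ with $\pi(\widetilde{\gamma})=\mathrm{coev}^{\mathbf{N}}_{X,X}$ and let $\beta\colon\mathrm{C}\to\mathrm{D}$ be its image under $\mathrm{Hom}_{\underline{\mathbf{M}}(\mathtt{i})}(X,\mathrm{D}\,X)\cong\mathrm{Hom}_{\underline{\ccC}}(\mathrm{C},\mathrm{D})$, so that $\beta^{\mathbf{M}}_X\circ\mathrm{coev}^{\mathbf{M}}_{X,X}=\widetilde{\gamma}$. I expect this lifting step, in particular the verification that fullness passes to the abelianization, to be the only real obstacle; the rest is formal.

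Finally I would compute the image of $\beta\alpha$ under $\mathrm{Hom}_{\underline{\ccC}}(\mathrm{D},\mathrm{D})\cong\mathrm{Hom}_{\underline{\mathbf{N}}(\mathtt{i})}(X,\mathrm{D}\,X)$. Using functoriality of the action ($(\beta\alpha)^{\mathbf{N}}_X=\beta^{\mathbf{N}}_X\circ\alpha^{\mathbf{N}}_X$), the characterisation of $\alpha$, the identity $\beta^{\mathbf{N}}_X=\pi(\beta^{\mathbf{M}}_X)$, and finally the definition of $\beta$,
$$(\beta\alpha)^{\mathbf{N}}_X\circ\mathrm{coev}^{\mathbf{N}}_{X,X}=\beta^{\mathbf{N}}_X\circ\pi(\mathrm{coev}^{\mathbf{M}}_{X,X})=\pi\big(\beta^{\mathbf{M}}_X\circ\mathrm{coev}^{\mathbf{M}}_{X,X}\big)=\pi(\widetilde{\gamma})=\mathrm{coev}^{\mathbf{N}}_{X,X}.$$
Since this is exactly the image of $1_{\mathrm{D}}$ and the correspondence is bijective, it follows that $\beta\alpha=1_{\mathrm{D}}$. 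In particular $\alpha$ is a split monomorphism, hence a monomorphism, as claimed.
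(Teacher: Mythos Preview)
Your proof is correct and follows essentially the same approach as the paper. Your constructions of $\alpha$ and $\beta$ coincide with the paper's (the paper defines $\alpha$ as the image of $1_{\mathrm{C}}$ under the composite $\mathrm{Hom}_{\underline{\ccC}}(\mathrm{C},\mathrm{C})\cong\mathrm{Hom}_{\underline{\mathbf{M}}(\mathtt{i})}(X,\mathrm{C}X)\twoheadrightarrow\mathrm{Hom}_{\underline{\mathbf{N}}(\mathtt{i})}(X,\mathrm{C}X)\cong\mathrm{Hom}_{\underline{\ccC}}(\mathrm{D},\mathrm{C})$, which is exactly your $\pi(\mathrm{coev}^{\mathbf{M}}_{X,X})$ transported across, and similarly for $\beta$). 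For the identity $\beta\alpha=1_{\mathrm{D}}$ the paper packages the same computation into a commutative cube whose right face encodes precisely your chain of equalities; your line $\beta^{\mathbf{N}}_X\circ\pi(\mathrm{coev}^{\mathbf{M}}_{X,X})=\pi(\beta^{\mathbf{M}}_X\circ\mathrm{coev}^{\mathbf{M}}_{X,X})$ is the content of the cube's left face commuting because $\pi$ is a morphism of $2$-representations. The one point you flag --- that fullness of $\pi$ should pass to the injective abelianization so that $\pi$ is surjective on the relevant Hom spaces --- is something the paper also uses (the vertical epimorphisms in its diagrams) but does not spell out.
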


\begin{proof}
The solid part of the diagram 
$$\xymatrix{
\mathrm{Hom}_{\underline{\mathbf{M}}(\mathtt{i})}(X, \mathrm{C}\, X) \ar^{\sim}[rr]\ar@{->>}[d]&& \mathrm{Hom}_{\underline{\ccC}}(\mathrm{C}, \mathrm{C})\ar@{-->}[d]\\
\mathrm{Hom}_{\underline{\mathbf{N}}(\mathtt{i})}(X, \mathrm{C}\, X) \ar^{\sim}[rr]&& \mathrm{Hom}_{\underline{\ccC}}(\mathrm{D}, \mathrm{C})\\
}$$
induces the dashed morphism on the right, and we define $\alpha$ to be the image of $1_{\mathrm{C}}$ under this morphism.

Similarly, the solid part of the diagram 
$$\xymatrix{
\mathrm{Hom}_{\underline{\mathbf{M}}(\mathtt{i})}(X, \mathrm{D}\, X) \ar^{\sim}[rr]\ar@{->>}[d]&& \mathrm{Hom}_{\underline{\ccC}}(\mathrm{C}, \mathrm{D})\ar@{-->}[d]\\
\mathrm{Hom}_{\underline{\mathbf{N}}(\mathtt{i})}(X, \mathrm{D}\, X) \ar^{\sim}[rr]&& \mathrm{Hom}_{\underline{\ccC}}(\mathrm{D}, \mathrm{D})\\
}$$
induces the dashed morphism on the right, and noting that this morphism is necessarily an epimorphism, we can choose a preimage $\beta \in \mathrm{Hom}_{\underline{\ccC}}(\mathrm{C}, \mathrm{D})$ of $1_{\mathrm{D}}\in \mathrm{Hom}_{\underline{\ccC}}(\mathrm{D}, \mathrm{D})$.

To prove $\beta\alpha = 1_{\mathrm{D}}$, consider the cube
$$\xymatrix{
\mathrm{Hom}_{\underline{\mathbf{M}}(\mathtt{i})}(X, \mathrm{C}\, X) \ar^{\sim}[rr]\ar@{->>}[dr]
\ar_{\beta_X\circ_{\mathrm{v}}{}_-}[dd]&& \mathrm{Hom}_{\underline{\ccC}}(\mathrm{C}, \mathrm{C})
\ar[dr]\ar^(.65){\beta\circ_{\mathrm{v}}{}_-}[dd]&\\
&\mathrm{Hom}_{\underline{\mathbf{N}}(\mathtt{i})}(X, \mathrm{C}\, X) \ar^{\sim}[rr]
\ar^(.65){\beta_X\circ_{\mathrm{v}}{}_-}[dd]&& \mathrm{Hom}_{\underline{\ccC}}(\mathrm{D}, \mathrm{C})
\ar^{\beta\circ_{\mathrm{v}}{}_-}[dd]\\
\mathrm{Hom}_{\underline{\mathbf{M}}(\mathtt{i})}(X, \mathrm{D}\, X) \ar^{\sim}[rr]\ar@{->>}[dr]&& \mathrm{Hom}_{\underline{\ccC}}(\mathrm{C}, \mathrm{D})\ar[dr]&\\
&\mathrm{Hom}_{\underline{\mathbf{N}}(\mathtt{i})}(X, \mathrm{D}\, X) \ar^{\sim}[rr]&& \mathrm{Hom}_{\underline{\ccC}}(\mathrm{D}, \mathrm{D})\\
}$$ 
which we claim commutes. Indeed, the top and bottom square commute by definition, the front and back arrows by functoriality of the isomorphisms defining the internal hom, and the left side square commutes since $\pi$ is a morphism of $2$-representations and to be precise , one would write $(\mathbf{M}(\beta))_X\circ_{\mathrm{v}}{}_-$ 
for the $\beta_X\circ_{\mathrm{v}}{}_-$ in the back and 
$(\mathbf{N}(\beta))_X\circ_{\mathrm{v}}{}_-$ for the one in the front, which equals 
$\pi(\mathbf{M}(\beta))_X\circ_{\mathrm{v}}{}_-$. Thus, the square on the right hand side commutes as well. Considering the image of $1_{\mathrm{C}}$ by going both ways around the square, we see that going down, this maps to $\beta$ and then to $1_{\mathrm{D}}$ going forwards, by definition of $\beta$. On the other hand, going forwards first, $1_{\mathrm{C}}$ maps to $\alpha $ by definition, and then going down to $\beta\alpha$. This proves $\beta\alpha=1_{\mathrm{D}}$.
\end{proof}

\begin{lemma}\label{commutes}
For any $1$-morphism $\mathrm{F}\in \underline{\cC}$, the diagram

$$\xymatrix{
\mathrm{Hom}_{\underline{\ccC}}(\mathrm{C}, \mathrm{F})\ar@{-}^{\ \sim}[d]\ar^{{}_-\circ_{\mathrm{v}}\alpha}[r]&\mathrm{Hom}_{\underline{\ccC}}(\mathrm{D}, \mathrm{F}) \ar@{-}^{\ \sim}[d]
\\
\mathrm{Hom}_{\underline{\mathbf{M}}(\mathtt{i})}(X, \mathrm{F}X)\ar@{->>}[r]^{\pi}&\mathrm{Hom}_{\underline{\mathbf{N}}(\mathtt{i})}(X, \mathrm{F}X)
}$$
commutes.
\end{lemma}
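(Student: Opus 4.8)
The plan is to exploit the same kind of naturality cube that proved Lemma~\ref{alphabeta}, but now indexed by an arbitrary $1$-morphism $\mathrm{F}$ rather than by $\mathrm{C}$ or $\mathrm{D}$. The statement asserts the commutativity of a square whose vertical arrows are the defining isomorphisms of the internal-hom coalgebras and whose horizontal arrows are, on top, precomposition with $\alpha$ and, on the bottom, the map $\pi$ induced by the quotient morphism $\mathbf{M}\twoheadrightarrow\mathbf{N}$. Since everything in sight is $\Bbbk$-linear and natural in $\mathrm{F}$, I would trace a single, suitably chosen element around both paths and check that the two images coincide.

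First I would fix notation: under the two vertical isomorphisms, an element of $\mathrm{Hom}_{\underline{\ccC}}(\mathrm{C},\mathrm{F})$ corresponds to a morphism $X\to\mathrm{F}\,X$ in $\underline{\mathbf{M}}(\mathtt{i})$, and the defining property of $\mathrm{coev}_{X,X}^{\mathbf{M}}$ is that it is the image of $1_{\mathrm{C}}$. The recipe for the isomorphism is that a $2$-morphism $g\colon\mathrm{C}\to\mathrm{F}$ is sent to $(g\,1_X)\circ_{\mathrm{v}}\mathrm{coev}_{X,X}^{\mathbf{M}}$, i.e.\ we whisker $g$ with $X$ and postcompose with coevaluation. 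The analogous recipe holds for $\mathrm{N}$ using $\mathrm{coev}_{X,X}^{\mathbf{N}}$. The next key step is to relate the two coevaluation maps via $\alpha$: precisely, I expect that $\pi(\mathrm{coev}_{X,X}^{\mathbf{M}})=(\alpha\,1_X)\circ_{\mathrm{v}}\mathrm{coev}_{X,X}^{\mathbf{N}}$ as morphisms $X\to\mathrm{C}\,X$ in $\underline{\mathbf{N}}(\mathtt{i})$. This identity is essentially the content of the definition of $\alpha$ in Lemma~\ref{alphabeta}: $\alpha$ was defined as the image of $1_{\mathrm{C}}$ under the map induced on the right of that lemma's first square, which is exactly the translation of $\pi(\mathrm{coev}_{X,X}^{\mathbf{M}})$ into $\mathrm{Hom}_{\underline{\ccC}}(\mathrm{D},\mathrm{C})$.

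With that relation in hand, the verification is a chase: starting from $g\in\mathrm{Hom}_{\underline{\ccC}}(\mathrm{C},\mathrm{F})$, going down and then right gives $\pi\bigl((g\,1_X)\circ_{\mathrm{v}}\mathrm{coev}_{X,X}^{\mathbf{M}}\bigr)=(g\,1_X)\circ_{\mathrm{v}}\pi(\mathrm{coev}_{X,X}^{\mathbf{M}})$, using that $\pi$ is a morphism of $2$-representations and hence commutes with the action of $g$; substituting the coevaluation identity turns this into $(g\,1_X)\circ_{\mathrm{v}}(\alpha\,1_X)\circ_{\mathrm{v}}\mathrm{coev}_{X,X}^{\mathbf{N}}=\bigl((g\circ_{\mathrm{v}}\alpha)\,1_X\bigr)\circ_{\mathrm{v}}\mathrm{coev}_{X,X}^{\mathbf{N}}$, by interchange. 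Going right and then down instead gives exactly the image of $g\circ_{\mathrm{v}}\alpha$ under the bottom isomorphism, which by the same recipe is $\bigl((g\circ_{\mathrm{v}}\alpha)\,1_X\bigr)\circ_{\mathrm{v}}\mathrm{coev}_{X,X}^{\mathbf{N}}$. The two agree, proving commutativity.

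The step I expect to be the main obstacle is pinning down the coevaluation identity $\pi(\mathrm{coev}_{X,X}^{\mathbf{M}})=(\alpha\,1_X)\circ_{\mathrm{v}}\mathrm{coev}_{X,X}^{\mathbf{N}}$ cleanly, since it forces one to unwind precisely how $\alpha$ was manufactured from the diagram in Lemma~\ref{alphabeta} and to match the universal-property bookkeeping of the internal hom with the whiskering recipe; the interchange law then does the rest, but getting the variances and the placement of $1_X$ correct is where care is needed. An alternative, perhaps cleaner, route is to observe that both composite maps in the square are natural transformations of functors in $\mathrm{F}$ on the representable $\mathrm{Hom}_{\underline{\ccC}}(\mathrm{C},{}_-)$, so by the Yoneda lemma it suffices to check equality on the universal element $1_{\mathrm{C}}$ at $\mathrm{F}=\mathrm{C}$, reducing the whole statement to the single computation $\pi(\mathrm{coev}_{X,X}^{\mathbf{M}})$ versus $\alpha$ already carried out in Lemma~\ref{alphabeta}.
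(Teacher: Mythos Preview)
Your proposal is correct and follows essentially the same approach as the paper: both proofs take an arbitrary $\gamma\in\mathrm{Hom}_{\underline{\ccC}}(\mathrm{C},\mathrm{F})$, use that $\pi$ is a morphism of $2$-representations to pull it past the action of $\gamma$, invoke the identity $\pi(\mathrm{coev}_{X,X}^{\mathbf{M}})=(\mathbf{N}(\alpha))_X\circ_{\mathrm{v}}\mathrm{coev}_{X,X}^{\mathbf{N}}$ coming from the definition of $\alpha$, and then apply functoriality/interchange to land on the image of $\gamma\circ_{\mathrm{v}}\alpha$. Your Yoneda alternative is a nice shortcut not in the paper, but the main argument is the same.
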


\begin{proof}
Taking any $\gamma\in \mathrm{Hom}_{\underline{\ccC}}(\mathrm{C}, \mathrm{F})$, the first vertical isomorphism sends $\gamma$ to the $2$-morphism $(\mathbf{M}(\gamma))_X\circ_{\mathrm{v}} \mathrm{coev}_{X,X}^{\mathbf{M}}$ and we have
\begin{displaymath}
\begin{aligned}
\pi((\mathbf{M}(\gamma))_X\circ_{\mathrm{v}}\mathrm{coev}_{X,X}^{\mathbf{M}})&=(\mathbf{N}(\gamma))_X\circ_{\mathrm{v}}\pi (\mathrm{coev}_{X,X}^{\mathbf{M}})\\
&=(\mathbf{N}(\gamma))_X\circ_{\mathrm{v}}(\mathbf{N}(\alpha))_X \circ_{\mathrm{v}}\mathrm{coev}_{X,X}^{\mathbf{N}}\\
&=(\mathbf{N}(\gamma\circ_{\mathrm{v}}\alpha))_X\circ_{\mathrm{v}}\mathrm{coev}_{X,X}^{\mathbf{N}}
\end{aligned}
\end{displaymath}
where the latter coincides with the image of $\gamma\circ_{\mathrm{v}}\alpha$ under the second vertical isomorphism.
\end{proof}

\begin{proposition}\label{coalgebramorphism}
The morphism $\alpha$ defined in Lemma \ref{alphabeta} is a morphism of coalgebra $1$-mor\-phisms.
\end{proposition}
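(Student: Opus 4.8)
The plan is to verify that $\alpha$ intertwines both the counits and the comultiplications, i.e. that $\varepsilon_{\mathrm{C}}\circ_{\mathrm{v}}\alpha=\varepsilon_{\mathrm{D}}$ and $\Delta_{\mathrm{C}}\circ_{\mathrm{v}}\alpha=(\alpha\circ_{\mathrm{h}}\alpha)\circ_{\mathrm{v}}\Delta_{\mathrm{D}}$ as $2$-morphisms in $\underline{\cC}$. Throughout I would use the description of the coalgebra structure on an internal-cohom object from \cite{MMMT}: under the isomorphism $\mathrm{Hom}_{\underline{\ccC}}(\mathrm{C},\mathrm{F})\cong\mathrm{Hom}_{\underline{\mathbf{M}}(\mathtt{i})}(X,\mathrm{F}\,X)$, the counit $\varepsilon_{\mathrm{C}}$ corresponds to $1_X$ (case $\mathrm{F}=\mathbbm{1}$), while $\Delta_{\mathrm{C}}$ corresponds to $(1_{\mathrm{C}}\circ_{\mathrm{h}}\mathrm{coev}_{X,X}^{\mathbf{M}})\circ_{\mathrm{v}}\mathrm{coev}_{X,X}^{\mathbf{M}}$ (case $\mathrm{F}=\mathrm{C}\mathrm{C}$), and analogously for $\mathrm{D}$ with $\mathrm{coev}_{X,X}^{\mathbf{N}}$. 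I would also use the identity $\pi(\mathrm{coev}_{X,X}^{\mathbf{M}})=(\mathbf{N}(\alpha))_X\circ_{\mathrm{v}}\mathrm{coev}_{X,X}^{\mathbf{N}}$, which follows from the definition of $\alpha$ and is already used in the proof of Lemma~\ref{commutes}.

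The counit compatibility is immediate. Applying Lemma~\ref{commutes} with $\mathrm{F}=\mathbbm{1}$, the element $\varepsilon_{\mathrm{C}}\in\mathrm{Hom}_{\underline{\ccC}}(\mathrm{C},\mathbbm{1})$ corresponds to $1_X$, and $-\circ_{\mathrm{v}}\alpha$ matches $\pi$; since $\pi(1_X)=1_X$ and $1_X$ corresponds to $\varepsilon_{\mathrm{D}}$, we obtain $\varepsilon_{\mathrm{C}}\circ_{\mathrm{v}}\alpha=\varepsilon_{\mathrm{D}}$.

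For the comultiplication I would transport both sides of the desired identity along the isomorphism $\mathrm{Hom}_{\underline{\ccC}}(\mathrm{D},\mathrm{C}\mathrm{C})\cong\mathrm{Hom}_{\underline{\mathbf{N}}(\mathtt{i})}(X,\mathrm{C}\mathrm{C}\,X)$ and compare the images. On the left, Lemma~\ref{commutes} with $\mathrm{F}=\mathrm{C}\mathrm{C}$ identifies $\Delta_{\mathrm{C}}\circ_{\mathrm{v}}\alpha$ with $\pi\big((1_{\mathrm{C}}\circ_{\mathrm{h}}\mathrm{coev}_{X,X}^{\mathbf{M}})\circ_{\mathrm{v}}\mathrm{coev}_{X,X}^{\mathbf{M}}\big)$; since $\pi$ preserves $\circ_{\mathrm{v}}$ and commutes with the action of $\cC$, this becomes $(1_{\mathrm{C}}\circ_{\mathrm{h}}\pi(\mathrm{coev}_{X,X}^{\mathbf{M}}))\circ_{\mathrm{v}}\pi(\mathrm{coev}_{X,X}^{\mathbf{M}})$, into which I substitute the identity above. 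On the right, naturality of the internal-hom isomorphism sends $(\alpha\circ_{\mathrm{h}}\alpha)\circ_{\mathrm{v}}\Delta_{\mathrm{D}}$ to $((\alpha\circ_{\mathrm{h}}\alpha)\circ_{\mathrm{h}}1_X)\circ_{\mathrm{v}}(1_{\mathrm{D}}\circ_{\mathrm{h}}\mathrm{coev}_{X,X}^{\mathbf{N}})\circ_{\mathrm{v}}\mathrm{coev}_{X,X}^{\mathbf{N}}$. The two expressions are then matched by a direct interchange-law computation, the decisive move being the naturality square of $\mathbf{N}(\alpha)$ applied to $\mathrm{coev}_{X,X}^{\mathbf{N}}$, which pulls the two copies of $\alpha$ together into $\alpha\circ_{\mathrm{h}}\alpha$. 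Injectivity of the transporting isomorphism then yields equality of the original $2$-morphisms.

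The hard part is this comultiplication step: unlike $\Delta_{\mathrm{C}}\circ_{\mathrm{v}}\alpha$, the term $\alpha\circ_{\mathrm{h}}\alpha$ is not of the form $-\circ_{\mathrm{v}}\alpha$, so Lemma~\ref{commutes} does not apply to it directly and one must instead descend to the module category and carry out the bookkeeping with the interchange law. The two points requiring care are that $\pi$ genuinely commutes with the horizontal action of $\mathrm{C}$ (the $2$-naturality of the morphism $\pi$ of $2$-representations) and the correct threading of the several $\circ_{\mathrm{h}}$/$\circ_{\mathrm{v}}$ factors so that the naturality of $\mathbf{N}(\alpha)$ can be invoked at the right spot.
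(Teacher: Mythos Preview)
Your argument is correct. The counit step is identical to the paper's. For the comultiplication, however, you take a genuinely different route.

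The paper does not compare the two sides of $(\alpha\circ_{\mathrm{h}}\alpha)\circ_{\mathrm{v}}\Delta_{\mathrm{D}}=\Delta_{\mathrm{C}}\circ_{\mathrm{v}}\alpha$ directly. Instead it exploits the left inverse $\beta$ from Lemma~\ref{alphabeta} and reduces the question to the equivalent identity $\Delta_{\mathrm{D}}=(\beta\circ_{\mathrm{h}}\beta)\circ_{\mathrm{v}}\Delta_{\mathrm{C}}\circ_{\mathrm{v}}\alpha$, which is then obtained by chasing $1_{\mathrm{C}}$ around a large commutative square built from the internal-hom isomorphisms, the maps $-\circ_{\mathrm{v}}\alpha$, $(\beta\beta)\circ_{\mathrm{v}}-$, and $\pi$; the crucial input is that $1_{\mathrm{C}}\mapsto\beta\alpha=1_{\mathrm{D}}$ along the top row. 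Your approach bypasses $\beta$ entirely: you transport both sides into $\mathrm{Hom}_{\underline{\mathbf{N}}(\mathtt{i})}(X,\mathrm{C}\mathrm{C}\,X)$ and equate them there, using only $\pi(\mathrm{coev}_{X,X}^{\mathbf{M}})=(\mathbf{N}(\alpha))_X\circ_{\mathrm{v}}\mathrm{coev}_{X,X}^{\mathbf{N}}$ together with the interchange law. Concretely, writing $\alpha\circ_{\mathrm{h}}\alpha=(1_{\mathrm{C}}\circ_{\mathrm{h}}\alpha)\circ_{\mathrm{v}}(\alpha\circ_{\mathrm{h}}1_{\mathrm{D}})$ and sliding $\alpha\circ_{\mathrm{h}}1_X$ past $1_{\mathrm{D}}\circ_{\mathrm{h}}\mathrm{coev}_{X,X}^{\mathbf{N}}$ via interchange does exactly match the image of $\Delta_{\mathrm{C}}\circ_{\mathrm{v}}\alpha$ under Lemma~\ref{commutes}. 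Your route is shorter and avoids the auxiliary diagram; the paper's route, on the other hand, makes the role of $\beta$ explicit and packages the whole computation as tracking identities through a single commutative grid.
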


\begin{proof}
Consider the diagram
$$\xymatrix{
\mathrm{Hom}_{\underline{\ccC}}(\mathrm{C}, \mathrm{C}) \ar^{{}_-\circ_{\mathrm{v}}\alpha}[r]\ar@{-}^{\sim}[d]
& \mathrm{Hom}_{\underline{\ccC}}(\mathrm{D}, \mathrm{C}) \ar@{-}^{\sim}[d]\ar^{\beta\circ_{\mathrm{v}}{}_-}[r]
& \mathrm{Hom}_{\underline{\ccC}}(\mathrm{D}, \mathrm{D})\ar@{-}^{\sim}[d]\ar^{\alpha\circ_{\mathrm{v}}{}_-}[l]<1ex>
\\ 
\mathrm{Hom}_{\underline{\mathbf{M}}(\mathtt{i})}(X, \mathrm{C}\, X) \ar@{->>}[r] 
\ar_{(1_{\mathrm{C}}\mathrm{coev}_{X,X}^{\mathbf{M}})\circ_{\mathrm{v}}{}_-}[d]
& \mathrm{Hom}_{\underline{\mathbf{N}}(\mathtt{i})}(X, \mathrm{C}\, X)\ar^{\beta_X\circ_{\mathrm{v}}{}_-}[r] \ar_{(1_{\mathrm{C}}(\alpha_X\circ_{\mathrm{v}}\mathrm{coev}_{X,X}^{\mathbf{N}}))\circ_{\mathrm{v}}{}_-}[d]& \mathrm{Hom}_{\underline{\mathbf{N}}(\mathtt{i})}(X, \mathrm{D}\, X)\ar^{\alpha_X\circ_{\mathrm{v}}{}_-}[l]<1ex>\ar^{(1_{\mathrm{D}}\mathrm{coev}_{X,X}^{\mathbf{N}})\circ_{\mathrm{v}}{}_-}[d]\\
\mathrm{Hom}_{\underline{\mathbf{M}}(\mathtt{i})}(X, \mathrm{C}\mathrm{C}\, X)  \ar@{->>}[r]\ar@{-}^{\sim}[d] 
& \mathrm{Hom}_{\underline{\mathbf{N}}(\mathtt{i})}(X, \mathrm{C}\mathrm{C}\, X) \ar@{-}^{\sim}[d] \ar^{((\beta\beta)_X)\circ_{\mathrm{v}}{}_-}[r]
& \mathrm{Hom}_{\underline{\mathbf{N}}(\mathtt{i})}(X, \mathrm{D}\mathrm{D}\, X)
\ar@{-}^{\sim}[d]\ar^{((\alpha\alpha)_X)\circ_{\mathrm{v}}{}_-}[l]<1ex>\\
\mathrm{Hom}_{\underline{\ccC}}(\mathrm{C}, \mathrm{C}\mathrm{C}) \ar^{{}_-\circ_{\mathrm{v}}\alpha}[r]
& \mathrm{Hom}_{\underline{\ccC}}(\mathrm{D}, \mathrm{C}\mathrm{C}) \ar^{(\beta\beta)\circ_{\mathrm{v}}{}_-}[r]
& \mathrm{Hom}_{\underline{\ccC}}(\mathrm{D}, \mathrm{D}\mathrm{D})\ar^{(\alpha\alpha)\circ_{\mathrm{v}}{}_-}[l]<1ex>\\ 
}$$ 
where the epimorphisms are induced by $\pi$, the vertical isomorphisms between the first and second, and the third and fourth row are the functorial isomorphisms from the definition of the internal hom.

The top two and the bottom two squares commute by construction (where for the squares on the right, commuting means that any two possible length two paths around the square are equal) and Lemma \ref{commutes}. To see that the left middle square commutes, note that under the epimorphism
$\mathrm{Hom}_{\underline{\mathbf{M}}(\mathtt{i})}(X, \mathrm{C}\, X)\twoheadrightarrow \mathrm{Hom}_{\underline{\mathbf{N}}(\mathtt{i})}(X, \mathrm{C}\, X)$ the image of $\mathrm{coev}_{X,X}^{\mathbf{M}}$ is $\alpha_X\circ_{\mathrm{v}} \mathrm{coev}_{X,X}^{\mathbf{N}}$ and that again $\pi$ is a morphism of $2$-representations. Commutativity of the right middle square (again meaning any paths of length two around the square are equal) is checked by direct computation.

To show $\alpha$ preserves comultiplication, we need to check that 
$(\alpha\circ_{\mathrm{h}}\alpha)\circ_{\mathrm{v}}\Delta_{\mathrm{D}}=\Delta_{\mathrm{C}}\circ_{\mathrm{v}}\alpha$. Equivalently, 
as $\beta$ is a left inverse to $\alpha$, we can show that 
\begin{equation}\label{comult}
\Delta_{\mathrm{D}}=(\beta\circ_{\mathrm{h}}\beta)\circ_{\mathrm{v}}\Delta_{\mathrm{C}}\circ_{\mathrm{v}}\alpha.                                                                                                                                                                                                                                                                                                                                                                                                                                                                                 \end{equation}

The right hand side of \eqref{comult} is precisely the image of the $2$-morphism
$1_{\mathrm{C}}\in\mathrm{Hom}_{\underline{\ccC}}(\mathrm{C}, \mathrm{C})$ in $\mathrm{Hom}_{\underline{\ccC}}(\mathrm{D}, \mathrm{D}\mathrm{D})$ by going down on the left and then across along the bottom line in the diagram (recall that the comultiplication $\Delta_{\mathrm{C}}$ is defined as the image, under the isomorphism with $\mathrm{Hom}_{\underline{\ccC}}(\mathrm{C}, \mathrm{C}\mathrm{C})$,  of $(1_{\mathrm{C}}\circ_{\mathrm{h}} \mathrm{coev}_{X,X}^{\mathbf{M}})\circ_{\mathrm{v}}\mathrm{coev}_{X,X}^{\mathbf{M}} \in \mathrm{Hom}_{\underline{\mathbf{M}}(\mathtt{i})}(X, \mathrm{C}\mathrm{C}\, X) $).

The left hand side of \eqref{comult} is the image of the $2$-morphism $1_{\mathrm{D}}\in\mathrm{Hom}_{\underline{\ccC}}(\mathrm{D}, \mathrm{D})$ when mapped to $\mathrm{Hom}_{\underline{\ccC}}(\mathrm{D}, \mathrm{D}\mathrm{D})$ along the right vertical of the diagram, so we are done if we can show that  $1_{\mathrm{C}}$ maps to $1_{\mathrm{D}}$ along the top of the diagram. However, the maps along the top take $1_{\mathrm{C}}$ to $\beta\alpha$, which equals $1_{\mathrm{D}}$ by Lemma \ref{alphabeta}, and thus $\alpha$ respects comultiplication.

Preservation of the counit simply follows from commutativity of the diagram
$$\xymatrix{
\mathrm{Hom}_{\underline{\mathbf{M}}(\mathtt{i})}(X, X) \ar@{-}^{\sim}[r]\ar@{->>}[d]
&\mathrm{Hom}_{\underline{\ccC}}(\mathrm{C}, \mathbbm{1}_{\mathtt{i}})\ar^{{}_-\circ_{\mathrm{v}}\alpha}[d]\\
\mathrm{Hom}_{\underline{\mathbf{N}}(\mathtt{i})}(X, X) \ar@{-}^{\sim}[r]
&\mathrm{Hom}_{\underline{\ccC}}(\mathrm{D}, \mathbbm{1}_{\mathtt{i}}),
}$$
which is obtained by applying Lemma \ref{commutes} to $\mathrm{F}=\mathbbm{1}_{\mathtt{i}}$.
This completes the proof.
\end{proof}

\subsection{Cotensor products}\label{snew1.43}
Let $\cC$ be a fiat $2$-category and $\mathrm{C}=(\mathrm{C},\tilde{\Delta},\tilde{\varepsilon})$ 
a coalgebra $1$-morphism in $\underline{\cC}$.
Let $(\mathrm{M},\delta_M)\in\mathrm{comod}_{\underline{\ccC}}(\mathrm{C})$ and
$(\mathrm{N},\delta_N)\in(\mathrm{C})\mathrm{comod}_{\underline{\ccC}}$.
In this subsection we adjust the classical construction of the
cotensor product $\mathrm{M}\square_{\mathrm{C}}\mathrm{N}$, see \cite{Ta}, to our setup.

We define $\mathrm{M}\square_{\mathrm{C}}\mathrm{N}$ as the equalizer of the
diagram
\begin{equation}\label{ndiag1}
\xymatrix{\mathrm{M}\square_{\mathrm{C}}\mathrm{N}\ar@{^{(}.>}[rr]^{\xi}&&
\mathrm{M}\mathrm{N}\ar@/^1pc/[rr]^{\delta_{\mathrm{M}}1_{\mathrm{N}}}
\ar@/_1pc/[rr]_{1_{\mathrm{M}}\delta_{\mathrm{N}}}&&
\mathrm{M}\mathrm{C}\mathrm{N},
} 
\end{equation}
that is, $\mathrm{M}\square_{\mathrm{C}}\mathrm{N}$ is defined as the kernel of the $2$-morphism
$\delta_{\mathrm{M}}1_{\mathrm{N}}-1_{\mathrm{M}}\delta_{\mathrm{N}}$. 
For a fixed $\mathrm{M}$, this is, obviously,
covariantly functorial in  $\mathrm{N}$, and vice versa. It follows that
${}_-\square_{\mathrm{C}}{}_-$ is a bifunctor.

Assume that $\mathrm{D}=(\mathrm{D},\Delta,\varepsilon)$ is another 
coalgebra $1$-morphism in $\underline{\cC}$ and that $\mathrm{N}$ is a 
$\mathrm{C}$-$\mathrm{D}$-bicomodule $1$-morphism with the right coaction of $\mathrm{D}$ given
by $\gamma:\mathrm{N}\to\mathrm{N}\mathrm{D}$. Then $\mathrm{M}\square_{\mathrm{C}}\mathrm{N}$
has the natural structure of a right $\mathrm{D}$-comodule $1$-morphism, as seen from the 
diagram
\begin{equation}\label{ndiag2}
\xymatrix{\mathrm{M}\square_{\mathrm{C}}\mathrm{N}\ar@{^{(}.>}[rr]^{\xi}
\ar@{.>}[dd]_{\delta_{\mathrm{M}\square_{\mathrm{C}}\mathrm{N}}}&&
\mathrm{M}\mathrm{N}\ar@/^1pc/[rr]^{\delta_{\mathrm{M}}1_{\mathrm{N}}}
\ar@/_1pc/[rr]_{1_{\mathrm{M}}\delta_{\mathrm{N}}}\ar[dd]_{1_{\mathrm{M}}\gamma}&&
\mathrm{M}\mathrm{C}\mathrm{N}\ar[dd]^{1_{\mathrm{M}}1_{\mathrm{C}}\gamma}\\\\
\mathrm{M}\square_{\mathrm{C}}\mathrm{N}\mathrm{D}\ar@{^{(}.>}[rr]^{\xi1_{\mathrm{D}}}&&
\mathrm{M}\mathrm{N}\mathrm{D}\ar@/^1pc/[rr]^{\delta_{\mathrm{M}}1_{\mathrm{N}}1_{\mathrm{D}}}
\ar@/_1pc/[rr]_{1_{\mathrm{M}}\delta_{\mathrm{N}}1_{\mathrm{D}}}&&
\mathrm{M}\mathrm{C}\mathrm{N}\mathrm{D}.
} 
\end{equation}
Here we have two commutative solid squares: the vertical solid maps form a commutative square
with the upper horizontal solid maps by interchange law, and with the lower  
horizontal solid maps due to the fact that $\mathrm{N}$ is a 
$\mathrm{C}$-$\mathrm{D}$-bicomodule $1$-morphism. Therefore we obtain the induced dotted map on the right
which defines the necessary structure. Due to functoriality of the construction mentioned above,
we thus have a functor
\begin{displaymath}
{}_-\square_{\mathrm{C}}\mathrm{N}: \mathrm{comod}_{\underline{\ccC}}(\mathrm{C})\to 
\mathrm{comod}_{\underline{\ccC}}(\mathrm{D}). 
\end{displaymath}
The following lemma establishes one of the basic familiar properties of this functor.

\begin{lemma}\label{lemnnn2}
The functor  
\begin{displaymath}
{}_-\square_{\mathrm{C}}\mathrm{C}: \mathrm{comod}_{\underline{\ccC}}(\mathrm{C})\to 
\mathrm{comod}_{\underline{\ccC}}(\mathrm{C})
\end{displaymath}
is isomorphic to the identity functor on $\mathrm{comod}_{\underline{\ccC}}(\mathrm{C})$.
\end{lemma}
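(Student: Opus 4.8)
The plan is to reproduce, in the element-free language of $\underline{\cC}$, the classical isomorphism $\mathrm{M}\square_{\mathrm{C}}\mathrm{C}\cong\mathrm{M}$ for a right comodule $\mathrm{M}$ over a coalgebra $\mathrm{C}$. Here the relevant instance of \eqref{ndiag1} presents $\mathrm{M}\square_{\mathrm{C}}\mathrm{C}$ as the kernel $\xi\colon\mathrm{M}\square_{\mathrm{C}}\mathrm{C}\hookrightarrow\mathrm{M}\mathrm{C}$ of $\delta_{\mathrm{M}}1_{\mathrm{C}}-1_{\mathrm{M}}\tilde{\Delta}$, and the right $\mathrm{C}$-comodule structure on $\mathrm{M}\square_{\mathrm{C}}\mathrm{C}$ is the one induced by \eqref{ndiag2} with $\mathrm{D}=\mathrm{N}=\mathrm{C}$ and $\gamma=\tilde{\Delta}$. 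First I would construct two mutually inverse $2$-morphisms. The counit supplies $\psi_{\mathrm{M}}:=(1_{\mathrm{M}}\tilde{\varepsilon})\circ_{\mathrm{v}}\xi\colon\mathrm{M}\square_{\mathrm{C}}\mathrm{C}\to\mathrm{M}$. In the other direction, the comodule coassociativity axiom $(\delta_{\mathrm{M}}1_{\mathrm{C}})\circ_{\mathrm{v}}\delta_{\mathrm{M}}=(1_{\mathrm{M}}\tilde{\Delta})\circ_{\mathrm{v}}\delta_{\mathrm{M}}$ says precisely that $\delta_{\mathrm{M}}\colon\mathrm{M}\to\mathrm{M}\mathrm{C}$ equalizes the two parallel maps in \eqref{ndiag1}, so by the universal property of the kernel it factors uniquely as $\delta_{\mathrm{M}}=\xi\circ_{\mathrm{v}}\phi_{\mathrm{M}}$, defining $\phi_{\mathrm{M}}\colon\mathrm{M}\to\mathrm{M}\square_{\mathrm{C}}\mathrm{C}$.

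Next I would check that $\phi_{\mathrm{M}}$ and $\psi_{\mathrm{M}}$ are mutually inverse. One composite is immediate: $\psi_{\mathrm{M}}\circ_{\mathrm{v}}\phi_{\mathrm{M}}=(1_{\mathrm{M}}\tilde{\varepsilon})\circ_{\mathrm{v}}\xi\circ_{\mathrm{v}}\phi_{\mathrm{M}}=(1_{\mathrm{M}}\tilde{\varepsilon})\circ_{\mathrm{v}}\delta_{\mathrm{M}}=1_{\mathrm{M}}$ by the counit axiom of the comodule $\mathrm{M}$. For the other composite, since $\xi$ is a monomorphism it suffices to prove $\xi\circ_{\mathrm{v}}\phi_{\mathrm{M}}\circ_{\mathrm{v}}\psi_{\mathrm{M}}=\xi$, i.e. $\delta_{\mathrm{M}}\circ_{\mathrm{v}}(1_{\mathrm{M}}\tilde{\varepsilon})\circ_{\mathrm{v}}\xi=\xi$. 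This is the element-free analogue of the usual Sweedler computation: applying the $2$-morphism $1_{\mathrm{M}}1_{\mathrm{C}}\tilde{\varepsilon}\colon\mathrm{M}\mathrm{C}\mathrm{C}\to\mathrm{M}\mathrm{C}$ to the defining equalizer relation $(\delta_{\mathrm{M}}1_{\mathrm{C}})\circ_{\mathrm{v}}\xi=(1_{\mathrm{M}}\tilde{\Delta})\circ_{\mathrm{v}}\xi$ and simplifying both sides by the interchange law yields $\delta_{\mathrm{M}}\circ_{\mathrm{v}}(1_{\mathrm{M}}\tilde{\varepsilon})\circ_{\mathrm{v}}\xi$ on the left (collapsing the original copy of $\mathrm{C}$ by $\tilde{\varepsilon}$) and $\xi$ on the right (using the counit axiom $(1_{\mathrm{C}}\tilde{\varepsilon})\circ_{\mathrm{v}}\tilde{\Delta}=1_{\mathrm{C}}$ of the coalgebra $\mathrm{C}$). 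Hence $\phi_{\mathrm{M}}\circ_{\mathrm{v}}\psi_{\mathrm{M}}=1_{\mathrm{M}\square_{\mathrm{C}}\mathrm{C}}$.

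It remains to upgrade these to an isomorphism of comodules and to verify naturality. Since $\phi_{\mathrm{M}}$ and $\psi_{\mathrm{M}}$ are mutually inverse in $\underline{\cC}$, it is enough to check that $\phi_{\mathrm{M}}$ respects the coactions; and since the right coaction $\delta_{\mathrm{M}\square_{\mathrm{C}}\mathrm{C}}$ is characterized by $(\xi1_{\mathrm{C}})\circ_{\mathrm{v}}\delta_{\mathrm{M}\square_{\mathrm{C}}\mathrm{C}}=(1_{\mathrm{M}}\tilde{\Delta})\circ_{\mathrm{v}}\xi$ and $\xi1_{\mathrm{C}}$ is again a monomorphism (right horizontal composition with $\mathrm{C}$ is left exact by the construction of $\underline{\cC}$ recalled before Lemma~\ref{lemnnn1}), it suffices to compare the two sides of $\delta_{\mathrm{M}\square_{\mathrm{C}}\mathrm{C}}\circ_{\mathrm{v}}\phi_{\mathrm{M}}=(\phi_{\mathrm{M}}1_{\mathrm{C}})\circ_{\mathrm{v}}\delta_{\mathrm{M}}$ after composing with $\xi1_{\mathrm{C}}$; both reduce to the coassociativity identity $(1_{\mathrm{M}}\tilde{\Delta})\circ_{\mathrm{v}}\delta_{\mathrm{M}}=(\delta_{\mathrm{M}}1_{\mathrm{C}})\circ_{\mathrm{v}}\delta_{\mathrm{M}}$. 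Finally, for a comodule morphism $f\colon\mathrm{M}\to\mathrm{M}'$ the functor sends $f$ to the map $f\square_{\mathrm{C}}1_{\mathrm{C}}$ induced on kernels by $f1_{\mathrm{C}}$, and naturality of $\psi$ follows from $\xi_{\mathrm{M}'}\circ_{\mathrm{v}}(f\square_{\mathrm{C}}1_{\mathrm{C}})=(f1_{\mathrm{C}})\circ_{\mathrm{v}}\xi_{\mathrm{M}}$ together with the interchange identity $f\circ_{\mathrm{v}}(1_{\mathrm{M}}\tilde{\varepsilon})=(1_{\mathrm{M}'}\tilde{\varepsilon})\circ_{\mathrm{v}}(f1_{\mathrm{C}})$. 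The only genuinely delicate point is the bookkeeping of the interchange law in the two ``collapsing'' computations and the repeated use of left exactness to guarantee that $\xi$ and $\xi1_{\mathrm{C}}$ remain monomorphisms in $\underline{\cC}$; everything else is a direct translation of the classical comodule argument.
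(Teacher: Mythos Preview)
Your proof is correct and uses the same two maps as the paper: the factorization $\phi_{\mathrm{M}}$ of $\delta_{\mathrm{M}}$ through the equalizer, and the composite $\psi_{\mathrm{M}}=(1_{\mathrm{M}}\tilde{\varepsilon})\circ_{\mathrm{v}}\xi$. The only difference is in how the isomorphism is established. The paper merely shows that each of these maps is a monomorphism (for $\psi_{\mathrm{M}}$ via essentially the same diagram chase you give, composing the equalizer relation with $1_{\mathrm{M}}1_{\mathrm{C}}\tilde{\varepsilon}$) and then tacitly uses that two objects admitting monomorphisms into each other in a finite-length abelian category must be isomorphic. You instead verify directly that $\phi_{\mathrm{M}}$ and $\psi_{\mathrm{M}}$ are mutually inverse. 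Your argument is therefore slightly more self-contained---it does not appeal to any finiteness properties of $\underline{\cC}$---and it also spells out the comodule-morphism compatibility and naturality, which the paper leaves implicit.
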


\begin{proof}
Let $(\mathrm{M},\delta)$ be a $\mathrm{C}$-comodule $1$-morphism. Consider the diagram 
\begin{displaymath}
\xymatrix{\mathrm{M}\ar[rr]^{\delta}&&
\mathrm{M}\mathrm{C}\ar@/^1pc/[rr]^{\delta1_{\mathrm{C}}}
\ar@/_1pc/[rr]_{1_{\mathrm{M}}\tilde{\Delta}}&&
\mathrm{M}\mathrm{C}\mathrm{C}.
} 
\end{displaymath}
The left morphism on this diagram equalizes the right two morphisms due to the coassociativity 
part of the coaction axiom. By the counitality part of the coaction axiom, the composite
\begin{displaymath}
\xymatrix{\mathrm{M}\ar[rr]^{\delta}&&
\mathrm{M}\mathrm{C}\ar[rr]^{1_{\mathrm{M}}\tilde{\varepsilon}}&&\mathrm{M}.
} 
\end{displaymath}
is the identity. Therefore $\delta$ is monic. By the universal property of kernels, it follows
that $\delta$ factors through $\mathrm{M}\square_{\mathrm{C}}\mathrm{C}$ and the factorization map
is a monomorphism. This thus defines a monic natural transformation from the identity functor 
to ${}_-\square_{\mathrm{C}}\mathrm{C}$ and it remains to verify that it is an isomorphism.
For this it is enough to construct a monomorphism from $\mathrm{M}\square_{\mathrm{C}}\mathrm{C}$
to $\mathrm{M}$.

Consider the diagram
\begin{displaymath}
\xymatrix{
\mathrm{M}\square_{\mathrm{C}}\mathrm{C}\ar[rr]^{\xi}&&
\mathrm{M}\mathrm{C}\ar@/^1pc/[rr]^{\delta1_{\mathrm{C}}}
\ar@/_1pc/[rr]_{1_{\mathrm{M}}\tilde{\Delta}}\ar[d]_{1_{\mathrm{M}}\tilde{\varepsilon}}&&
\mathrm{M}\mathrm{C}\mathrm{C}\ar[d]^{1_{\mathrm{M}}1_{\mathrm{C}}\tilde{\varepsilon}}\\
&&\mathrm{M}\ar[rr]_{\delta}&&\mathrm{M}\mathrm{C}
} 
\end{displaymath}
With respect to the upper of the two arrows from $\mathrm{M}\mathrm{C}$ to $\mathrm{M}\mathrm{C}\mathrm{C}$,
we have a commutative square by the interchange law. The composite of the lower of the two arrows
with the rightmost vertical arrow is the identity by the counitality axiom. As the arrow starting at
$\mathrm{M}\square_{\mathrm{C}}\mathrm{C}$ equalizes these two arrows, all paths from the north-west
$\mathrm{M}\square_{\mathrm{C}}\mathrm{C}$ to the south-east $\mathrm{M}\mathrm{C}$ on the diagram are equal to $\xi$. 
In particular, 
$(1_{\mathrm{M}}\tilde{\varepsilon})\circ_{\mathrm{v}} \xi:\mathrm{M}\square_{\mathrm{C}}\mathrm{C}\to \mathrm{M}$ 
is a right factor of
a monomorphism and hence is monic. The claim follows.
\end{proof}

\subsection{An adjunction using cotensor products}\label{snew1.45}

Let $\cC$ be a fiat $2$-category. Consider two coalgebra $1$-morphisms  
$\mathrm{C}=(\mathrm{C},\tilde{\Delta},\tilde{\varepsilon})$ and 
$\mathrm{D}=(\mathrm{D},\Delta,\varepsilon)$ 
in $\underline{\cC}$. Consider also a monomorphism $\iota:\mathrm{D}\to\mathrm{C}$ 
of coalgebra $1$-morphisms. In this situation we have an exact embedding 
\begin{displaymath}
\Psi:\mathrm{comod}_{\underline{\ccC}}(\mathrm{D})\longrightarrow\mathrm{comod}_{\underline{\ccC}}(\mathrm{C})
\end{displaymath}
where, given a $\mathrm{D}$-comodule $1$-morphism $(\mathrm{M},\delta_{\mathrm{M}})$, the 
$\mathrm{C}$-comodule $1$-morphism $\Psi(\mathrm{M})$ is defined to be $\mathrm{M}$ with the coaction
map given by the composite
\begin{displaymath}
\xymatrix{
\mathrm{M}\ar[rr]^{\delta_{\mathrm{M}}}&&\mathrm{M}\mathrm{D}
\ar[rr]^{1_{\mathrm{M}}\iota}&&\mathrm{M} \mathrm{C}.
}
\end{displaymath}

The monomorphism $\iota$ defines on $\mathrm{D}$ the structure of a left $\mathrm{C}$-comodule $1$-morphism via
\begin{displaymath}
\xymatrix{
\mathrm{D}\ar[rr]^{\Delta_{\mathrm{D}}}&&\mathrm{D}\mathrm{D}
\ar[rr]^{\iota1_{\mathrm{D}}}&&\mathrm{C}\mathrm{D} ,
}
\end{displaymath}
which, in fact, turns $\mathrm{D}$ into a $\mathrm{C}$-$\mathrm{D}$-bicomodule $1$-morphism
with the regular right coaction of $\mathrm{D}$. The aim of this subsection is to show 
that the functor $\Phi:={}_-\square_{\mathrm{C}}\mathrm{D}$ is right adjoint to $\Psi$
and to describe some basic properties of this adjunction. All diagrams in this section will be diagrams in $\underline{\cC}$.

For a $\mathrm{C}$-comodule $1$-morphism  $\mathrm{M}$, we denote by $\xi_{\mathrm{M}}$ the embedding of
$\mathrm{M}\square_{\mathrm{C}}\mathrm{D}$ into $\mathrm{M}\mathrm{D}$ given by the definition
of ${}_-\square_{\mathrm{C}}\mathrm{D}$.

\begin{lemma}\label{lemmarco1}
Let $(\mathrm{M},\delta)$ be a $\mathrm{C}$-comodule $1$-morphism. Then the composite
\begin{displaymath}
\xymatrix{
\mathrm{M}\square_{\mathrm{C}}\mathrm{D}\ar@{^{(}->}[rr]^{\xi_{\mathrm{M}}}&&
{\mathrm{M}}\mathrm{D}\ar[rr]^{1_{\mathrm{M}}\varepsilon}&&{\mathrm{M}}
}
\end{displaymath}
is a monomorphism.
\end{lemma}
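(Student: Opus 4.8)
The plan is to follow the strategy of the proof of Lemma~\ref{lemnnn2}: I would name the composite in question $f:=(1_{\mathrm{M}}\varepsilon)\circ_{\mathrm{v}}\xi_{\mathrm{M}}$ and exhibit a $2$-morphism $g$ for which $g\circ_{\mathrm{v}} f$ is visibly a monomorphism, so that $f$, being a right factor of a monomorphism, is itself monic. The candidate for $g$ is the coaction $\delta$ of the $\mathrm{C}$-comodule $\mathrm{M}$.

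First I would use that, by the definition of ${}_-\square_{\mathrm{C}}\mathrm{D}$, the embedding $\xi_{\mathrm{M}}$ equalizes the two $2$-morphisms $\delta 1_{\mathrm{D}}$ and $1_{\mathrm{M}}\delta_{\mathrm{D}}$ from $\mathrm{M}\mathrm{D}$ to $\mathrm{M}\mathrm{C}\mathrm{D}$, where $\delta_{\mathrm{D}}=(\iota1_{\mathrm{D}})\circ_{\mathrm{v}}\Delta$ is the left $\mathrm{C}$-coaction on $\mathrm{D}$. Post-composing the equalizer identity $(\delta 1_{\mathrm{D}})\circ_{\mathrm{v}}\xi_{\mathrm{M}}=(1_{\mathrm{M}}\delta_{\mathrm{D}})\circ_{\mathrm{v}}\xi_{\mathrm{M}}$ with $1_{\mathrm{M}}1_{\mathrm{C}}\varepsilon\colon\mathrm{M}\mathrm{C}\mathrm{D}\to\mathrm{M}\mathrm{C}$ (the counit of $\mathrm{D}$ applied in the last slot), I would simplify each side by the interchange law to obtain
\begin{align*}
(1_{\mathrm{M}}1_{\mathrm{C}}\varepsilon)\circ_{\mathrm{v}}(\delta 1_{\mathrm{D}})&=\delta\circ_{\mathrm{v}}(1_{\mathrm{M}}\varepsilon),\\
(1_{\mathrm{M}}1_{\mathrm{C}}\varepsilon)\circ_{\mathrm{v}}(1_{\mathrm{M}}\delta_{\mathrm{D}})&=1_{\mathrm{M}}\iota.
\end{align*}
The first identity is a direct application of interchange, the counit of $\mathrm{D}$ sliding past the coaction $\delta$ on the disjoint factor. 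The second is the crux and is where counitality of $\mathrm{D}$ enters: unravelling $\delta_{\mathrm{D}}$ and using interchange collapses the left-hand side to $1_{\mathrm{M}}\circ_{\mathrm{h}}\bigl((\iota\circ_{\mathrm{h}}\varepsilon)\circ_{\mathrm{v}}\Delta\bigr)$, and $(\iota\circ_{\mathrm{h}}\varepsilon)\circ_{\mathrm{v}}\Delta=\iota\circ_{\mathrm{v}}\bigl((1_{\mathrm{D}}\varepsilon)\circ_{\mathrm{v}}\Delta\bigr)=\iota$ by the right counit axiom for $\mathrm{D}$. I expect this short counitality computation to be the only genuinely delicate point; the rest is bookkeeping with the interchange law, exactly as in Lemma~\ref{lemnnn2} (where, because there $\mathrm{D}=\mathrm{C}$ and $\iota=1_{\mathrm{C}}$, the second identity produced the identity $2$-morphism rather than $1_{\mathrm{M}}\iota$).

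Substituting the two identities into the post-composed equalizer relation then gives
\begin{displaymath}
\delta\circ_{\mathrm{v}} f=(1_{\mathrm{M}}\iota)\circ_{\mathrm{v}}\xi_{\mathrm{M}}.
\end{displaymath}
Here the right-hand side is a composite of two monomorphisms: $\xi_{\mathrm{M}}$ is monic as a kernel embedding, and $1_{\mathrm{M}}\iota$ is monic because $\iota$ is monic and the functor of left horizontal composition by $\mathrm{M}$ is left exact (the action of $\underline{\cC}$ on itself being left exact, as recorded in Subsection~\ref{s2.4}), hence preserves monomorphisms. Thus $\delta\circ_{\mathrm{v}} f$ is monic, and since a right factor of a monomorphism is a monomorphism, $f=(1_{\mathrm{M}}\varepsilon)\circ_{\mathrm{v}}\xi_{\mathrm{M}}$ is monic, as claimed.
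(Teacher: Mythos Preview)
Your proof is correct and takes essentially the same approach as the paper: both arguments establish the key identity $\delta\circ_{\mathrm{v}} f=(1_{\mathrm{M}}\iota)\circ_{\mathrm{v}}\xi_{\mathrm{M}}$ and conclude that $f$ is monic as a right factor of a composite of two monomorphisms. The paper packages the same interchange and counitality computations into two commutative diagrams, whereas you carry them out directly; you also make explicit why $1_{\mathrm{M}}\iota$ is monic (via left exactness of horizontal composition in $\underline{\cC}$), which the paper uses without comment at this point.
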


We denote the composite monomorphism given by Lemma~\ref{lemmarco1} by $\zeta_{{\mathrm{M}}}$.

\begin{proof}
Consider the diagram
\begin{equation}\label{diag38}
\xymatrix{
\mathrm{M}\mathrm{C}&&\mathrm{M}\mathrm{C}\mathrm{D}\ar[ll]_{1_\mathrm{M}1_{\mathrm{C}}\varepsilon}&&
\mathrm{M}\mathrm{D}\mathrm{D}\ar[rr]^{1_\mathrm{M}1_{\mathrm{D}}\varepsilon}\ar[ll]_{1_\mathrm{M}\iota1_{\mathrm{D}}}
&&\mathrm{M}\mathrm{D}\ar@{=}[d]\ar@/_3pc/[llllll]_{1_\mathrm{M}\iota}
\\
&&&&&&\mathrm{M}\mathrm{D}\ar[llu]_{1_\mathrm{M}{\Delta}}
}
\end{equation}
Here the upper part commutes by the interchange law and the lower part commutes
by the axioms of a coalgebra. Hence this diagram commutes.

Now consider the diagram
\begin{equation}\label{diag39}
\xymatrix{
\mathrm{M}\square_{\mathrm{C}}\mathrm{D}\ar@{^{(}->}[rrrr]^{\xi_\mathrm{M}}&&&&
\mathrm{M}\mathrm{D}\ar[rr]^{1_\mathrm{M}\varepsilon}\ar[d]^{\delta1_{\mathrm{D}}}\ar@{=}[lllld]\ar[lld]^{1_m\Delta}&&
\mathrm{M}\ar[d]^{\delta}\\
\mathrm{M}\mathrm{D}\ar@/_3pc/@{_{(}->}[rrrrrr]^{1_\mathrm{M}\iota}&&
\mathrm{M}\mathrm{D}\mathrm{D}\ar[rr]_{1_\mathrm{M}\iota1_{\mathrm{D}}}\ar[ll]^{1_\mathrm{M}1_{\mathrm{D}}\varepsilon}&&
\mathrm{M}\mathrm{C}\mathrm{D}\ar[rr]_{1_\mathrm{M}1_{\mathrm{C}}\varepsilon}&&\mathrm{M}\mathrm{C}\\
}
\end{equation}
The part of this diagram which can be recognized as \eqref{diag38} commutes by the previous paragraph.
The right square commutes by the interchange law. And the map $\xi_\mathrm{M}$ is, by definition, the equalizer of
the middle triangle. Therefore, if we start at $\mathrm{M}\square_{\mathrm{C}}\mathrm{D}$, 
then the whole diagram commutes.

In particular, the composite $\delta\circ_{\mathrm{v}} (1_\mathrm{M}\varepsilon)\circ_{\mathrm{v}} \xi_\mathrm{M}$ equals the
composite of the monomorphism $\xi_\mathrm{M}$ and the monomorphism $1_\mathrm{M}\iota$. Hence 
$\delta\circ_{\mathrm{v}} (1_\mathrm{M}\varepsilon)\circ_{\mathrm{v}} \xi_\mathrm{M}$ is a monomorphism. Consequently,
$(1_\mathrm{M}\varepsilon)\circ_{\mathrm{v}} \xi_\mathrm{M}$ is a monomorphism, as claimed.
\end{proof}

As an immediate corollary from Lemma~\ref{lemmarco1}, we have:

\begin{corollary}\label{cormarco2}
The functor $\Phi\Psi$ is isomorphic to the identity functor on 
$\mathrm{comod}_{\underline{\ccC}}(\mathrm{D})$. 
\end{corollary}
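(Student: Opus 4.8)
The plan is to produce a natural isomorphism $\theta\colon \mathrm{Id}\Rightarrow\Phi\Psi$ whose components are inverse to the monomorphisms $\zeta$ supplied by Lemma~\ref{lemmarco1}. Fix a $\mathrm{D}$-comodule $(\mathrm{M},\delta_{\mathrm{M}})$. Unravelling the definitions, $\Phi\Psi(\mathrm{M})=\Psi(\mathrm{M})\square_{\mathrm{C}}\mathrm{D}$ is, by \eqref{ndiag1}, the kernel in $\mathrm{M}\mathrm{D}$ of the difference of the two maps $\mathrm{M}\mathrm{D}\to\mathrm{M}\mathrm{C}\mathrm{D}$ given by $((1_{\mathrm{M}}\iota)\circ_{\mathrm{v}}\delta_{\mathrm{M}})1_{\mathrm{D}}$ (the $\Psi$-coaction on $\mathrm{M}$) and $1_{\mathrm{M}}((\iota 1_{\mathrm{D}})\circ_{\mathrm{v}}\Delta)$ (the left $\mathrm{C}$-coaction on $\mathrm{D}$). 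First I would check that $\delta_{\mathrm{M}}$ equalizes these two maps: using the interchange law and the coassociativity of $(\mathrm{M},\delta_{\mathrm{M}})$, both composites with $\delta_{\mathrm{M}}$ reduce to $(1_{\mathrm{M}}\iota 1_{\mathrm{D}})\circ_{\mathrm{v}}(1_{\mathrm{M}}\Delta)\circ_{\mathrm{v}}\delta_{\mathrm{M}}$. The universal property of the kernel then yields a unique $\theta_{\mathrm{M}}\colon\mathrm{M}\to\Phi\Psi(\mathrm{M})$ with $\xi_{\mathrm{M}}\circ_{\mathrm{v}}\theta_{\mathrm{M}}=\delta_{\mathrm{M}}$.

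Next I would argue that $\theta_{\mathrm{M}}$ is invertible in $\underline{\cC}$. Writing $\zeta_{\mathrm{M}}=(1_{\mathrm{M}}\varepsilon)\circ_{\mathrm{v}}\xi_{\mathrm{M}}$ for the monomorphism of Lemma~\ref{lemmarco1}, the counit axiom for $(\mathrm{M},\delta_{\mathrm{M}})$ gives $\zeta_{\mathrm{M}}\circ_{\mathrm{v}}\theta_{\mathrm{M}}=(1_{\mathrm{M}}\varepsilon)\circ_{\mathrm{v}}\delta_{\mathrm{M}}=1_{\mathrm{M}}$; hence $\zeta_{\mathrm{M}}$ is a split epimorphism. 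Since it is also a monomorphism, the identity $\zeta_{\mathrm{M}}\circ_{\mathrm{v}}(\theta_{\mathrm{M}}\circ_{\mathrm{v}}\zeta_{\mathrm{M}})=\zeta_{\mathrm{M}}$ forces $\theta_{\mathrm{M}}\circ_{\mathrm{v}}\zeta_{\mathrm{M}}=1$, so $\theta_{\mathrm{M}}$ and $\zeta_{\mathrm{M}}$ are mutually inverse isomorphisms.

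It then remains to promote $\theta_{\mathrm{M}}$ to an isomorphism in $\mathrm{comod}_{\underline{\ccC}}(\mathrm{D})$ and to establish naturality. For the comodule-morphism property I would precompose with the monomorphism $\xi_{\mathrm{M}}1_{\mathrm{D}}$ (monic because right multiplication by $\mathrm{D}$ is left exact, as in the proof of Lemma~\ref{lemnnn1}), using the characterization $(\xi_{\mathrm{M}}1_{\mathrm{D}})\circ_{\mathrm{v}}\delta_{\Phi\Psi(\mathrm{M})}=(1_{\mathrm{M}}\Delta)\circ_{\mathrm{v}}\xi_{\mathrm{M}}$ of the induced coaction from \eqref{ndiag2} (with regular right coaction $\gamma=\Delta$): both $(\xi_{\mathrm{M}}1_{\mathrm{D}})\circ_{\mathrm{v}}\delta_{\Phi\Psi(\mathrm{M})}\circ_{\mathrm{v}}\theta_{\mathrm{M}}$ and $(\xi_{\mathrm{M}}1_{\mathrm{D}})\circ_{\mathrm{v}}(\theta_{\mathrm{M}}1_{\mathrm{D}})\circ_{\mathrm{v}}\delta_{\mathrm{M}}$ collapse to $(1_{\mathrm{M}}\Delta)\circ_{\mathrm{v}}\delta_{\mathrm{M}}$ after applying the interchange law and coassociativity, so $\theta_{\mathrm{M}}$ intertwines the coactions; being an isomorphism in the abelian subcategory $\mathrm{comod}_{\underline{\ccC}}(\mathrm{D})$ (Lemma~\ref{lemnnn1}), its inverse is automatically a comodule map.

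Naturality in $\mathrm{M}$ follows by the same cancellation trick: for a comodule morphism $f\colon\mathrm{M}\to\mathrm{M}'$ one has $\xi_{\mathrm{M}'}\circ_{\mathrm{v}}(f\square_{\mathrm{C}}1_{\mathrm{D}})=(f1_{\mathrm{D}})\circ_{\mathrm{v}}\xi_{\mathrm{M}}$, and composing $\theta_{\mathrm{M}'}\circ_{\mathrm{v}} f$ and $(f\square_{\mathrm{C}}1_{\mathrm{D}})\circ_{\mathrm{v}}\theta_{\mathrm{M}}$ with the monomorphism $\xi_{\mathrm{M}'}$ yields $(f1_{\mathrm{D}})\circ_{\mathrm{v}}\delta_{\mathrm{M}}$ in both cases. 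I expect the main bookkeeping obstacle to be this last step, namely correctly matching the induced $\mathrm{D}$-comodule structure on the cotensor product coming from diagram~\eqref{ndiag2} with the structure transported along $\delta_{\mathrm{M}}$, since everything else is essentially forced by the universal property of the equalizer together with Lemma~\ref{lemmarco1}.
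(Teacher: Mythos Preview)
Your argument is correct. You directly construct the unit $\theta_{\mathrm{M}}$ via the equalizer's universal property, exhibit $\zeta_{\mathrm{M}}$ from Lemma~\ref{lemmarco1} as its two-sided inverse, and then verify compatibility with the $\mathrm{D}$-coaction and naturality by cancelling the monomorphism $\xi_{\mathrm{M}}$ (respectively $\xi_{\mathrm{M}}1_{\mathrm{D}}$). All of these steps go through as you describe.

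The paper takes a different, more modular route. It observes that, since $\iota$ is monic and horizontal composition in $\underline{\cC}$ is left exact, the map $1_{\mathrm{M}}\iota 1_{\mathrm{D}}\colon \mathrm{M}\mathrm{D}\mathrm{D}\to\mathrm{M}\mathrm{C}\mathrm{D}$ is monic; hence the equalizer computing $\Psi(\mathrm{M})\square_{\mathrm{C}}\mathrm{D}$ coincides with the equalizer computing $\mathrm{M}\square_{\mathrm{D}}\mathrm{D}$, and one concludes by invoking Lemma~\ref{lemnnn2}. So the paper trades your appeal to Lemma~\ref{lemmarco1} for an appeal to Lemma~\ref{lemnnn2}. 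The paper's argument is shorter and avoids the explicit bookkeeping with coactions and naturality (these are absorbed into Lemma~\ref{lemnnn2}); your argument, on the other hand, is more self-contained in that it makes the inverse and the comodule compatibility fully explicit, and in particular avoids the implicit ``two monomorphisms in opposite directions imply isomorphism'' step used at the end of the proof of Lemma~\ref{lemnnn2}.
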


\begin{proof}
Let $(\mathrm{M},\delta)$ be a $\mathrm{D}$-comodule $1$-morphism.
Consider the diagram 
\begin{displaymath}
\xymatrix{
\mathrm{M}\mathrm{D}\ar@/^/[rr]^{\delta1_{\mathrm{D}}}\ar@/_/[rr]_{1_\mathrm{M}\Delta}
&&\mathrm{M}\mathrm{D}\mathrm{D}\ar[rr]^{1_\mathrm{M}\iota1_{\mathrm{D}}}&&\mathrm{M}\mathrm{C}\mathrm{D}
}
\end{displaymath}
By definition, $\mathrm{M}\square_{\mathrm{D}}\mathrm{D}$ is the equalizer of the two
morphisms on the left and $\mathrm{M}\square_{\mathrm{C}}\mathrm{D}$ is the equalizer
of the two morphisms from the left to the right. As 
$\iota$ is monic and composition in $\underline{\cC}$ is left exact by construction, 
we get that  $1_\mathrm{M}\iota1_{\mathrm{D}}$ is monic.
Hence $\mathrm{M}\square_{\mathrm{C}}\mathrm{D}\cong \mathrm{M}\square_{\mathrm{D}}\mathrm{D}$
and the latter is isomorphic to $\mathrm{M}$ by Lemma~\ref{lemnnn2}. The claim follows.
\end{proof}

\begin{lemma}\label{lemmarco3}
There is a monic natural transformation from $\Psi\Phi$ to the identity functor on 
$\mathrm{comod}_{\underline{\ccC}}(\mathrm{C})$. 
\end{lemma}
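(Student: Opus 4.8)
The plan is to promote the monomorphism $\zeta_{\mathrm{M}}\colon \mathrm{M}\square_{\mathrm{C}}\mathrm{D}\to\mathrm{M}$ produced in Lemma~\ref{lemmarco1} to a natural transformation $\Psi\Phi\to\mathrm{Id}$. On underlying $1$-morphisms, $\Psi\Phi(\mathrm{M})$ is $\mathrm{M}\square_{\mathrm{C}}\mathrm{D}=\Phi(\mathrm{M})$, equipped with the $\mathrm{C}$-coaction $(1_{\mathrm{M}\square_{\mathrm{C}}\mathrm{D}}\iota)\circ_{\mathrm{v}}\delta_{\Phi(\mathrm{M})}$ obtained by corestricting its $\mathrm{D}$-coaction $\delta_{\Phi(\mathrm{M})}$ along $\iota$. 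Since $\zeta_{\mathrm{M}}=(1_{\mathrm{M}}\varepsilon)\circ_{\mathrm{v}}\xi_{\mathrm{M}}$ is already monic by Lemma~\ref{lemmarco1}, it suffices to check that each $\zeta_{\mathrm{M}}$ is $\mathrm{C}$-colinear and that $\zeta=(\zeta_{\mathrm{M}})$ is natural in $\mathrm{M}$.

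Naturality is the routine part. For a morphism $f\colon\mathrm{M}\to\mathrm{M}'$ in $\mathrm{comod}_{\underline{\ccC}}(\mathrm{C})$, functoriality of ${}_-\square_{\mathrm{C}}\mathrm{D}$ makes the embedding $\xi$ natural, so $\xi_{\mathrm{M}'}\circ_{\mathrm{v}}\Phi(f)=(f1_{\mathrm{D}})\circ_{\mathrm{v}}\xi_{\mathrm{M}}$; whiskering with $1\varepsilon$ and using the interchange law $(1_{\mathrm{M}'}\varepsilon)\circ_{\mathrm{v}}(f1_{\mathrm{D}})=f\circ_{\mathrm{v}}(1_{\mathrm{M}}\varepsilon)$ yields $\zeta_{\mathrm{M}'}\circ_{\mathrm{v}}\Phi(f)=f\circ_{\mathrm{v}}\zeta_{\mathrm{M}}$. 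As $\Psi$ acts as the identity on underlying morphisms, this is precisely the naturality square.

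The heart of the argument, and the step I expect to be the main obstacle, is the $\mathrm{C}$-colinearity of $\zeta_{\mathrm{M}}$, i.e. the identity
\[
\delta_{\mathrm{M}}\circ_{\mathrm{v}}\zeta_{\mathrm{M}}=(\zeta_{\mathrm{M}}1_{\mathrm{C}})\circ_{\mathrm{v}}(1\iota)\circ_{\mathrm{v}}\delta_{\Phi(\mathrm{M})}.
\]
I would verify it by reducing both sides to $(1_{\mathrm{M}}\iota)\circ_{\mathrm{v}}\xi_{\mathrm{M}}$. For the left-hand side this is exactly the relation $\delta_{\mathrm{M}}\circ_{\mathrm{v}}\zeta_{\mathrm{M}}=(1_{\mathrm{M}}\iota)\circ_{\mathrm{v}}\xi_{\mathrm{M}}$ already extracted from diagram~\eqref{diag39} in the proof of Lemma~\ref{lemmarco1}. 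For the right-hand side, the interchange law rewrites $(\zeta_{\mathrm{M}}1_{\mathrm{C}})\circ_{\mathrm{v}}(1\iota)$ as $(1_{\mathrm{M}}\iota)\circ_{\mathrm{v}}(\zeta_{\mathrm{M}}1_{\mathrm{D}})$; expanding $\zeta_{\mathrm{M}}1_{\mathrm{D}}=(1_{\mathrm{M}}\varepsilon1_{\mathrm{D}})\circ_{\mathrm{v}}(\xi_{\mathrm{M}}1_{\mathrm{D}})$ and substituting the defining relation $(\xi_{\mathrm{M}}1_{\mathrm{D}})\circ_{\mathrm{v}}\delta_{\Phi(\mathrm{M})}=(1_{\mathrm{M}}\Delta)\circ_{\mathrm{v}}\xi_{\mathrm{M}}$ of the $\mathrm{D}$-coaction on $\Phi(\mathrm{M})$ (read off from diagram~\eqref{ndiag2} with $\mathrm{N}=\mathrm{D}$ and $\gamma=\Delta$) reduces the right-hand side to $(1_{\mathrm{M}}\iota)\circ_{\mathrm{v}}(1_{\mathrm{M}}\varepsilon1_{\mathrm{D}})\circ_{\mathrm{v}}(1_{\mathrm{M}}\Delta)\circ_{\mathrm{v}}\xi_{\mathrm{M}}$. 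The left counit axiom $(\varepsilon1_{\mathrm{D}})\circ_{\mathrm{v}}\Delta=1_{\mathrm{D}}$ for $\mathrm{D}$ then collapses the composite $(1_{\mathrm{M}}\varepsilon1_{\mathrm{D}})\circ_{\mathrm{v}}(1_{\mathrm{M}}\Delta)$ to the identity on $\mathrm{M}\mathrm{D}$, leaving $(1_{\mathrm{M}}\iota)\circ_{\mathrm{v}}\xi_{\mathrm{M}}$, as required. The only genuinely delicate points are keeping track of which tensor factor each whiskered $2$-morphism acts on, and invoking the left rather than the right counit axiom; once the colinearity square commutes, the collection $(\zeta_{\mathrm{M}})$ is the desired monic natural transformation $\Psi\Phi\to\mathrm{Id}$.
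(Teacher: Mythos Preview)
Your proof is correct and follows essentially the same strategy as the paper: both show that $\zeta_{\mathrm{M}}=(1_{\mathrm{M}}\varepsilon)\circ_{\mathrm{v}}\xi_{\mathrm{M}}$ is a morphism of $\mathrm{C}$-comodule $1$-morphisms, with monicity inherited from Lemma~\ref{lemmarco1}. The only stylistic difference is that the paper packages the colinearity check into a large commutative diagram (including a detour through $\mathrm{M}\mathrm{C}\mathrm{C}$ and an appeal to $\iota$ preserving counits), whereas you argue more directly by applying the interchange law first to reduce to $(1_{\mathrm{M}}\iota)\circ_{\mathrm{v}}\xi_{\mathrm{M}}$ using only the left counit axiom of $\mathrm{D}$; your route is slightly shorter and avoids invoking $\tilde{\varepsilon}$ altogether.
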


\begin{proof}
Let $(\mathrm{M},\delta)$ be a $\mathrm{C}$-comodule $1$-morphism.
Consider the diagram
\begin{equation}\label{diag51}
\xymatrix{
&&\mathrm{M}\ar[rr]^{\delta}&&\mathrm{M}\mathrm{C}\\
\Phi(\mathrm{M})\ar[rr]^{\xi_\mathrm{M}}\ar[d]&&
\mathrm{M}\mathrm{D}\ar[u]^{1_\mathrm{M}\varepsilon}\ar[rr]^{\delta1_{\mathrm{D}}}\ar[d]_{1_\mathrm{M}\Delta}
&&\mathrm{M}\mathrm{C}\mathrm{D}\ar[u]^{1_\mathrm{M}1_{\mathrm{C}}\varepsilon}
\ar[dd]_{1_{\mathrm{M}}1_{\mathrm{C}}\iota}\\
\Phi(\mathrm{M})\ar[rr]^{\xi_\mathrm{M}1_{\mathrm{D}}}\mathrm{D}\ar[d]_{1_{\Phi(\mathrm{M})}\iota}&&
\mathrm{M}\mathrm{D}\mathrm{D}\ar[d]_{1_\mathrm{M}1_{\mathrm{D}}\iota}\ar[rru]^{1_\mathrm{M}\iota1_{\mathrm{D}}}&&\\
\Phi(\mathrm{M})\ar[rr]^{\xi_\mathrm{M}1_{\mathrm{C}}}\mathrm{C}&&
\mathrm{M}\mathrm{D}\mathrm{C}\ar[rr]^{1_\mathrm{M}\iota1_{\mathrm{C}}}&&
\mathrm{M}\mathrm{C}\mathrm{C}.\ar@/_2pc/[uuu]_{1_\mathrm{M}1_{\mathrm{C}}\tilde{\varepsilon}}
}
\end{equation}
We claim that everything, except for the triangle in the middle of this diagram, commutes.
The top square in the first column commutes by the definition of $\Phi$. The
bottom square in the first column commutes by the interchange law. The rightmost triangle
commutes as $\iota$ is a homomorphism of coalgebra $1$-morphisms. The top square
in the middle column commutes by the interchange law. The lower quadrilateral in the middle
column also commutes by the interchange law. 

Starting with $\xi_\mathrm{M}$, the triangle in the middle commutes by the definition of $\Phi$. This means
that all paths in \eqref{diag51} starting at $\Phi(\mathrm{M})$ and ending in $\mathrm{M}\mathrm{C}$ commute.

Next we consider the diagram
\begin{equation}\label{diag52}
\xymatrix{
\mathrm{D}\ar[d]_{\iota}\ar[rr]^{\Delta}&&
\mathrm{D}\mathrm{D}\ar[d]_{\iota\iota}\ar@/^/[rr]^{\varepsilon1_{\mathrm{D}}}
\ar@/_/[rr]_{1_{\mathrm{D}}\varepsilon}&&
\mathrm{D}\ar[d]_{\iota}\\
\mathrm{C}\ar[rr]^{\tilde{\Delta}}&&
\mathrm{C}\mathrm{C}\ar@/^/[rr]^{\tilde{\varepsilon}1_{\mathrm{C}}}
\ar@/_/[rr]_{1_{\mathrm{C}}\tilde{\varepsilon}}&&
\mathrm{C}.
}
\end{equation}
This diagram commutes since $\iota$ is a homomorphism of coalgebra $1$-morphisms (in fact, all
horizontal paths from the extreme left all the way to the extreme right are the identities).
From \eqref{diag52} it follows that, in the path
\begin{displaymath}
\mathrm{M}\mathrm{D}\to \mathrm{M}\mathrm{D}\mathrm{D}\to  
\mathrm{M}\mathrm{D}\mathrm{C}\to \mathrm{M}\mathrm{C}\mathrm{C}\to \mathrm{M}\mathrm{C}
\end{displaymath}
in \eqref{diag51},
the very last step $1_\mathrm{M}1_{\mathrm{C}}\tilde{\varepsilon}$ 
can be exchanged with $1_\mathrm{M}\tilde{\varepsilon}1_{\mathrm{C}}$ without
changing the whole composite.

The latter, in turn, implies that $(1_\mathrm{M}\varepsilon)\circ_{\mathrm{v}} \xi_\mathrm{M}$ 
is a natural transformation
from $\Psi\Phi$ to the identity functor on 
$\mathrm{comod}_{\underline{\ccC}}(\mathrm{C})$.
It is monic by Lemma~\ref{lemmarco1}, as claimed.
\end{proof}

\begin{corollary}\label{cormarco4}
The functor $\Phi$ is right adjoint to $\Psi$. 
\end{corollary}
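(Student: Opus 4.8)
The plan is to establish the adjunction $\Psi\dashv\Phi$ by exhibiting the monic natural transformation $\zeta\colon\Psi\Phi\to\mathrm{Id}$ of Lemma~\ref{lemmarco3} as the counit of an adjunction; equivalently, I would show that for each $\mathrm{M}\in\mathrm{comod}_{\underline{\ccC}}(\mathrm{C})$ the morphism $\zeta_{\mathrm{M}}\colon\Psi\Phi(\mathrm{M})\to\mathrm{M}$ enjoys the universal property of the counit at $\mathrm{M}$. Concretely, I fix $\mathrm{N}\in\mathrm{comod}_{\underline{\ccC}}(\mathrm{D})$ with coaction $\delta_{\mathrm{N}}\colon\mathrm{N}\to\mathrm{N}\mathrm{D}$ and an arbitrary $\mathrm{C}$-comodule morphism $f\colon\Psi(\mathrm{N})\to\mathrm{M}$, and produce a \emph{unique} $\mathrm{D}$-comodule morphism $h\colon\mathrm{N}\to\Phi(\mathrm{M})$ with $f=\zeta_{\mathrm{M}}\circ_{\mathrm{v}}\Psi(h)$. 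Naturality of the resulting hom-set bijection then follows from naturality of $\zeta$, and the coherence with Corollary~\ref{cormarco2} (which plays the role of the unit $\mathrm{Id}\to\Phi\Psi$ being an isomorphism) is automatic.

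To build $h$, consider the composite $g:=(f1_{\mathrm{D}})\circ_{\mathrm{v}}\delta_{\mathrm{N}}\colon\mathrm{N}\to\mathrm{M}\mathrm{D}$. The key step is to verify that $g$ equalizes the two maps $\delta_{\mathrm{M}}1_{\mathrm{D}}$ and $1_{\mathrm{M}}\delta_{\mathrm{D}}$ from $\mathrm{M}\mathrm{D}$ to $\mathrm{M}\mathrm{C}\mathrm{D}$ that define $\Phi(\mathrm{M})=\mathrm{M}\square_{\mathrm{C}}\mathrm{D}$ via \eqref{ndiag1}, where $\delta_{\mathrm{D}}=(\iota1_{\mathrm{D}})\circ_{\mathrm{v}}\Delta$ is the left $\mathrm{C}$-coaction on $\mathrm{D}$. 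Expanding $(\delta_{\mathrm{M}}1_{\mathrm{D}})\circ_{\mathrm{v}}g$ by the interchange law, rewriting $\delta_{\mathrm{M}}\circ_{\mathrm{v}}f$ through the $\mathrm{C}$-comodule condition on $f$ (recall that the coaction on $\Psi(\mathrm{N})$ is $(1_{\mathrm{N}}\iota)\circ_{\mathrm{v}}\delta_{\mathrm{N}}$), and applying coassociativity $(\delta_{\mathrm{N}}1_{\mathrm{D}})\circ_{\mathrm{v}}\delta_{\mathrm{N}}=(1_{\mathrm{N}}\Delta)\circ_{\mathrm{v}}\delta_{\mathrm{N}}$, both composites reduce to $(f1_{\mathrm{C}}1_{\mathrm{D}})\circ_{\mathrm{v}}(1_{\mathrm{N}}\delta_{\mathrm{D}})\circ_{\mathrm{v}}\delta_{\mathrm{N}}$. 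By the universal property of the equalizer $\xi_{\mathrm{M}}$ this gives a unique morphism $h\colon\mathrm{N}\to\mathrm{M}\square_{\mathrm{C}}\mathrm{D}$ in $\underline{\cC}$ with $\xi_{\mathrm{M}}\circ_{\mathrm{v}}h=g$.

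It then remains to confirm the factorization and uniqueness. For the factorization, since $\xi_{\mathrm{M}}\circ_{\mathrm{v}}h=g$ and $\zeta_{\mathrm{M}}=(1_{\mathrm{M}}\varepsilon)\circ_{\mathrm{v}}\xi_{\mathrm{M}}$, one computes $\zeta_{\mathrm{M}}\circ_{\mathrm{v}}h=(1_{\mathrm{M}}\varepsilon)\circ_{\mathrm{v}}(f1_{\mathrm{D}})\circ_{\mathrm{v}}\delta_{\mathrm{N}}=f\circ_{\mathrm{v}}(1_{\mathrm{N}}\varepsilon)\circ_{\mathrm{v}}\delta_{\mathrm{N}}=f$, using the interchange law and then counitality of $\delta_{\mathrm{N}}$ (and $\Psi(h)=h$ on underlying $2$-morphisms). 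For uniqueness, if $h'$ also satisfies $\zeta_{\mathrm{M}}\circ_{\mathrm{v}}\Psi(h')=f$, then $\Psi(h)=\Psi(h')$ because $\zeta_{\mathrm{M}}$ is a monomorphism by Lemma~\ref{lemmarco1}, and hence $h=h'$ since the embedding $\Psi$ is faithful.

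I expect the main obstacle to be checking that $h$ is genuinely a morphism of $\mathrm{D}$-comodules and not merely a morphism in $\underline{\cC}$: the $\mathrm{D}$-coaction $\delta_{\mathrm{M}\square_{\mathrm{C}}\mathrm{D}}$ on $\Phi(\mathrm{M})$ is defined in \eqref{ndiag2} only through a universal property, so intertwining it with $\delta_{\mathrm{N}}$ requires post-composing the desired identity with the monomorphism $\xi_{\mathrm{M}}1_{\mathrm{D}}$ and unwinding the defining diagram, i.e.\ comparing $(1_{\mathrm{N}}\gamma)$-type composites of $g$ against the induced coaction. As an alternative route, one could instead package Corollary~\ref{cormarco2} as the unit and $\zeta$ as the counit and verify the two triangle identities directly; in that formulation the same comodule-compatibility computation reappears as the crux, so I would favour the universal-arrow argument above.
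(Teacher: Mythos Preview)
Your proposal is correct. Both your argument and the paper's establish the adjunction, but by different standard routes.

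The paper proceeds via the unit-counit formulation: it takes the isomorphism $\alpha\colon\mathrm{Id}\to\Phi\Psi$ from Corollary~\ref{cormarco2} as unit and the monic $\beta\colon\Psi\Phi\to\mathrm{Id}$ from Lemma~\ref{lemmarco3} as counit, and then verifies the two triangle identities directly by stacking the defining diagrams \eqref{diag101}--\eqref{diag105} and invoking counitality of $\mathrm{D}$ together with monicity of $\xi_{\mathrm{N}}$. You instead verify the universal property of the counit at each $\mathrm{M}$: given $f\colon\Psi(\mathrm{N})\to\mathrm{M}$, you push $\delta_{\mathrm{N}}$ along $f$ to land in the equalizer defining $\mathrm{M}\square_{\mathrm{C}}\mathrm{D}$, obtaining the unique $h$ with $\zeta_{\mathrm{M}}\circ_{\mathrm{v}}h=f$. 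Your equalizer check is precisely the computation the paper does not need to make explicit (it is hidden in the construction of $\alpha$), while conversely the paper's second triangle identity is the step you avoid. The ``main obstacle'' you flag, that $h$ is a $\mathrm{D}$-comodule morphism, is genuine but straightforward: post-composing with the mono $\xi_{\mathrm{M}}1_{\mathrm{D}}$ and using the defining relation $(\xi_{\mathrm{M}}1_{\mathrm{D}})\circ_{\mathrm{v}}\delta_{\mathrm{M}\square_{\mathrm{C}}\mathrm{D}}=(1_{\mathrm{M}}\Delta)\circ_{\mathrm{v}}\xi_{\mathrm{M}}$ from \eqref{ndiag2}, both sides reduce via interchange and coassociativity of $\delta_{\mathrm{N}}$ to $(f1_{\mathrm{D}}1_{\mathrm{D}})\circ_{\mathrm{v}}(1_{\mathrm{N}}\Delta)\circ_{\mathrm{v}}\delta_{\mathrm{N}}$. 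Your approach is arguably more self-contained, relying only on Lemma~\ref{lemmarco1} and the equalizer definition rather than on the separate packaging of Corollary~\ref{cormarco2}; the paper's approach makes the unit and counit, and hence the full adjunction data, completely explicit.
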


\begin{proof}
Let $\alpha$ denote the natural transformation from the identity 
functor on $\mathrm{comod}_{\underline{\ccC}}(\mathrm{D})$
to $\Phi\Psi$ constructed in Corollary~\ref{cormarco2}.
Let $\beta$ denote the natural transformation from 
$\Psi\Phi$ to the identity functor on $\mathrm{comod}_{\underline{\ccC}}(\mathrm{C})$
constructed in Lemma~\ref{lemmarco3}.

Let $(\mathrm{M},\delta_\mathrm{M})$ be a  $\mathrm{D}$-comodule $1$-morphism and $(\mathrm{N},\delta_\mathrm{N})$ 
a  $\mathrm{C}$-comodule $1$-morphism. 
We have to check that both composites
\begin{equation}\label{eq100}
\Phi(\mathrm{N})\overset{\alpha_{\Phi(\mathrm{N})}}{\longrightarrow}\Phi\Psi\Phi(\mathrm{N})
\overset{\Phi(\beta_\mathrm{N})}{\longrightarrow}\Phi(\mathrm{N})\quad\text{ and }\quad
\Psi(\mathrm{M})\overset{\Psi(\alpha_\mathrm{M})}{\longrightarrow}\Psi\Phi\Psi(\mathrm{M})
\overset{\beta_{\Psi(\mathrm{M})}}{\longrightarrow}\Psi(\mathrm{M})
\end{equation}
are the identities. 

By definition of $\alpha$, we have the commutative diagram
\begin{equation}\label{diag101}
\xymatrix{
\mathrm{M}\ar[rr]^{\delta_{\mathrm{M}}}\ar[d]_{\alpha_\mathrm{M}}&&\mathrm{M}\mathrm{D}\ar@{=}[d]\\
\Phi\Psi(\mathrm{M})\ar[rr]^{\xi_{\Psi(\mathrm{M})}}&&\Psi(\mathrm{M})\mathrm{D}.
}
\end{equation}
Applying $\Psi$ to the left vertical arrow gives the commutative diagram
\begin{equation}\label{diag102}
\xymatrix{
\Psi(\mathrm{M})\ar[rr]^{\delta_{\mathrm{M}}}\ar[d]_{\alpha_\mathrm{M}=
\Psi(\alpha_\mathrm{M})}&&\mathrm{M}\mathrm{D}\ar@{=}[d]\\
\Psi\Phi\Psi(\mathrm{M})\ar[rr]^{\xi_{\Psi(\mathrm{M})}}&&\Psi(\mathrm{M})\mathrm{D}.
}
\end{equation}
Similarly, by the definition of $\beta$, we have the commutative diagram
\begin{equation}\label{diag103}
\xymatrix{
\Psi\Phi(\mathrm{N})\ar[rr]^{\xi_\mathrm{N}}\ar[drr]_{\beta_\mathrm{N}}&&\mathrm{N}\mathrm{D}
\ar[d]^{1_\mathrm{N}\varepsilon}\\
&&\mathrm{N}
}
\end{equation}
For $\mathrm{N}=\Psi(\mathrm{M})$, we get the commutative diagram
\begin{equation}\label{diag104}
\xymatrix{
\Psi\Phi\Psi(\mathrm{M})\ar[rr]^{\xi_{\Psi(\mathrm{M})}}\ar[drr]_{\beta_{\Psi(\mathrm{M})}}&&
\Psi(\mathrm{M})\mathrm{D}\ar[d]^{1_{\Psi(\mathrm{M})}\varepsilon}\\
&&\Psi(\mathrm{M})
}
\end{equation}
Putting \eqref{diag102} on top of  \eqref{diag104} and using the counitality axiom for $\mathrm{M}$
verifies  that the second composite in \eqref{eq100} is the identity.

To verify that the first composite in \eqref{eq100} is the identity, consider the diagram
\begin{equation}\label{diag105}
\xymatrix{
&&\mathrm{N}\mathrm{D}\ar[drr]^{1_\mathrm{N}\Delta}&&\\
\Phi(\mathrm{N})\ar^{\Phi(\delta_{\mathrm{N}})}[rr]\ar[d]_{\alpha_{\Phi(\mathrm{N})}}\ar^{\xi_\mathrm{N}}[urr]&&
\Phi(\mathrm{N})\mathrm{D}\ar^{\xi_\mathrm{N} 1_{\mathrm{D}}}[rr] &&
\mathrm{N}\mathrm{DD}\ar^{1_\mathrm{N}\varepsilon 1_D}[d]\\
\Phi\Psi\Phi(\mathrm{N})\ar[urr]^{\xi_{\Psi\Phi(\mathrm{N})}}\ar[rr]^{\Phi(\beta_\mathrm{N})}&&
\Phi(\mathrm{N})\ar^{\xi_\mathrm{N}}[rr]&&\mathrm{N}\mathrm{D}.
}
\end{equation}
Here the left triangle is just \eqref{diag101} evaluated at $\Phi(\mathrm{N})$ and hence  commutes.
Note that $\beta_\mathrm{N}:=(1_\mathrm{N}\varepsilon)\circ_{\mathrm{v}} \xi_\mathrm{N} $ by \eqref{diag103}. 
The pentagon commutes due to the definition of how $\Phi$ acts on 
morphisms, see the paragraph just before Lemma~\ref{lemmarco1}. 
Indeed, from the definition of  $\beta_\mathrm{N}$ it follows that
the composite $\Phi(\mathrm{N})\mathrm{D}\to \mathrm{N}\mathrm{DD}\to \mathrm{N}\mathrm{D}$ 
is $\beta_{\mathrm{N}}1_{\mathrm{D}}$. 
Applying the definition of how $\Phi$ acts on $\beta_\mathrm{N}$, we  get the
commutativity of the pentagon. The top part of the diagram commutes by \eqref{ndiag2}.
Therefore the whole diagram \eqref{diag105} commutes.

Consequently,  we have
\begin{displaymath}
\xi_\mathrm{N}\circ_{\mathrm{v}}\Phi(\beta_\mathrm{N})\circ_{\mathrm{v}}\alpha_{\Phi(\mathrm{N})}=
(1_\mathrm{N}\varepsilon1_\mathrm{D})\circ_{\mathrm{v}}(1_\mathrm{N}\Delta)\circ_{\mathrm{v}}\xi_\mathrm{N}=\xi_\mathrm{N},
\end{displaymath}
where the last equality follows from the counitality axiom for $\mathrm{D}$. 
Using monicity of $\xi_\mathrm{N}$, this verifies that the  first composite in \eqref{eq100}
is the identity morphism as well. This completes the proof.
\end{proof}

As an immediate consequence, we may now record:

\begin{corollary}\label{cormarco5}
The functor $\Phi$ maps injective comodule $1$-morphisms to injective comodule $1$-morphisms and is full on injective comodule $1$-morphisms.
\end{corollary}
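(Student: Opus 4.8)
The plan is to derive both assertions formally from the adjunction $(\Psi,\Phi)$ of Corollary~\ref{cormarco4}, using that $\Psi$ is exact together with the explicit unit and counit coming from Corollary~\ref{cormarco2} and Lemma~\ref{lemmarco3}. Throughout I write $\alpha\colon\mathrm{Id}\to\Phi\Psi$ for the unit (an isomorphism by Corollary~\ref{cormarco2}) and $\beta\colon\Psi\Phi\to\mathrm{Id}$ for the counit, which is a monomorphism by Lemma~\ref{lemmarco3}; the two triangle identities $\Phi(\beta_{-})\circ\alpha_{\Phi(-)}=\mathrm{id}$ and $\beta_{\Psi(-)}\circ\Psi(\alpha_{-})=\mathrm{id}$ are exactly the composites shown to be identities in the proof of Corollary~\ref{cormarco4}.

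Preservation of injectivity is the standard fact that a right adjoint of an exact functor preserves injective objects. Concretely, for an injective object $\mathrm{I}$ of $\mathrm{comod}_{\underline{\ccC}}(\mathrm{C})$, the adjunction supplies a natural isomorphism
\[
\mathrm{Hom}_{\mathrm{comod}_{\underline{\ccC}}(\mathrm{D})}(-,\Phi(\mathrm{I}))\;\cong\;
\mathrm{Hom}_{\mathrm{comod}_{\underline{\ccC}}(\mathrm{C})}(\Psi(-),\mathrm{I}).
\]
Since $\Psi$ is the exact embedding of Subsection~\ref{snew1.45} and $\mathrm{Hom}(-,\mathrm{I})$ is exact by injectivity of $\mathrm{I}$, the right-hand functor is exact; hence so is the left-hand one, which is precisely the statement that $\Phi(\mathrm{I})$ is injective. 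In particular $\Phi$ restricts to a functor between the subcategories of injective comodule $1$-morphisms.

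For fullness on injectives, let $\mathrm{I}$ and $\mathrm{J}$ be injective $\mathrm{C}$-comodule $1$-morphisms and let $g\colon\Phi(\mathrm{I})\to\Phi(\mathrm{J})$ be arbitrary; I must find $f\colon\mathrm{I}\to\mathrm{J}$ with $\Phi(f)=g$. First I form the morphism $\beta_{\mathrm{J}}\circ\Psi(g)\colon\Psi\Phi(\mathrm{I})\to\mathrm{J}$. Since $\beta_{\mathrm{I}}\colon\Psi\Phi(\mathrm{I})\to\mathrm{I}$ is a monomorphism and $\mathrm{J}$ is injective, this map extends along $\beta_{\mathrm{I}}$ to some $f\colon\mathrm{I}\to\mathrm{J}$ satisfying $f\circ\beta_{\mathrm{I}}=\beta_{\mathrm{J}}\circ\Psi(g)$. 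This extension step is the only place where injectivity is used and is the crux of the argument; the remaining verification is purely formal.

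It then remains to check $\Phi(f)=g$. Applying $\Phi$ to $f\circ\beta_{\mathrm{I}}=\beta_{\mathrm{J}}\circ\Psi(g)$ gives $\Phi(f)\circ\Phi(\beta_{\mathrm{I}})=\Phi(\beta_{\mathrm{J}})\circ\Phi\Psi(g)$. Precomposing with $\alpha_{\Phi(\mathrm{I})}$ and using the triangle identity $\Phi(\beta_{\mathrm{I}})\circ\alpha_{\Phi(\mathrm{I})}=\mathrm{id}_{\Phi(\mathrm{I})}$ collapses the left-hand side to $\Phi(f)$, while naturality of $\alpha$ rewrites $\Phi\Psi(g)\circ\alpha_{\Phi(\mathrm{I})}$ as $\alpha_{\Phi(\mathrm{J})}\circ g$, whence the right-hand side becomes $\Phi(\beta_{\mathrm{J}})\circ\alpha_{\Phi(\mathrm{J})}\circ g=g$ by the triangle identity at $\mathrm{J}$. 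Thus $\Phi(f)=g$, proving that $\Phi$ is full on injectives.
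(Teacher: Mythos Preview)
Your proof is correct and follows essentially the same approach as the paper's: both derive preservation of injectives from the adjunction with the exact functor $\Psi$, and for fullness both extend $\beta_{\mathrm{J}}\circ\Psi(g)$ along the monomorphism $\beta_{\mathrm{I}}$ using injectivity of $\mathrm{J}$. Your verification that $\Phi(f)=g$ via the triangle identities is more explicit than the paper's, which simply invokes Corollary~\ref{cormarco2} at that point, but the substance is identical.
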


\begin{proof}
By Corollary~\ref{cormarco4}, $\Phi$ is right adjoint to an exact functor. 
Therefore $\Phi$ maps injective comodule $1$-morphisms to injective comodule $1$-morphisms. Let $\mathrm{X}, \mathrm{Y}\in \mathrm{inj}_{\underline{\ccC}}(\mathrm{C})$.
Let $\varphi:\Phi(\mathrm{X})\to \Phi(\mathrm{Y})$ be a homomorphism. Consider
$\Psi(\varphi):\Psi\Phi(\mathrm{X})\to \Psi\Phi(\mathrm{Y})$. We can now use Lemma~\ref{lemmarco3}
to embed $\Psi\Phi(\mathrm{Y})$ into $\mathrm{Y}$ and then use the injectivity of $\mathrm{Y}$ to
lift $\Psi(\varphi)$ to a morphism $\psi:\mathrm{X}\to \mathrm{Y}$. Then $\Phi(\psi)=\varphi$ by 
Corollary~\ref{cormarco2}. This completes the proof.
\end{proof}

\begin{proposition}\label{propmarco6}
The functor $\Phi$ is a homomorphism of $2$-representations of $\cC$.
\end{proposition}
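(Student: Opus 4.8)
The plan is to produce, for every $1$-morphism $\mathrm{G}$ of $\cC$, a natural isomorphism
$\eta_{\mathrm{G}}\colon \mathrm{G}\,\Phi({}_-)\overset{\sim}{\Rightarrow}\Phi(\mathrm{G}\,{}_-)$
of functors $\mathrm{comod}_{\underline{\ccC}}(\mathrm{C})\to\mathrm{comod}_{\underline{\ccC}}(\mathrm{D})$, and then to check that the collection $(\eta_{\mathrm{G}})$ satisfies the coherence axioms of a strong $2$-natural transformation. Recall that $\cC$ acts on $\mathrm{comod}_{\underline{\ccC}}(\mathrm{C})$ by sending a $\mathrm{C}$-comodule $1$-morphism $(\mathrm{M},\delta_{\mathrm{M}})$ to $(\mathrm{G}\mathrm{M},1_{\mathrm{G}}\delta_{\mathrm{M}})$, and similarly on $\mathrm{comod}_{\underline{\ccC}}(\mathrm{D})$; saying that $\Phi$ is a homomorphism of $2$-representations means precisely that $\Phi$ intertwines these two actions up to coherent isomorphism.

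First I would construct $\eta_{\mathrm{G}}$. By definition, $\Phi(\mathrm{M})=\mathrm{M}\square_{\mathrm{C}}\mathrm{D}$ is the kernel of the $2$-morphism $\delta_{\mathrm{M}}1_{\mathrm{D}}-1_{\mathrm{M}}\delta_{\mathrm{D}}\colon \mathrm{M}\mathrm{D}\to\mathrm{M}\mathrm{C}\mathrm{D}$ appearing in \eqref{ndiag1}, where $\delta_{\mathrm{D}}\colon\mathrm{D}\to\mathrm{C}\mathrm{D}$ is the left $\mathrm{C}$-coaction on $\mathrm{D}$ from Subsection~\ref{snew1.45}. The crucial input is that, since $\cC$ is fiat, horizontal composition $1_{\mathrm{G}}\circ_{\mathrm{h}}{}_-$ is an \emph{exact} functor on $\underline{\cC}$ and hence preserves this kernel. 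Applying $1_{\mathrm{G}}\circ_{\mathrm{h}}{}_-$ to \eqref{ndiag1} and using associativity of $\circ_{\mathrm{h}}$, the two parallel maps become $(1_{\mathrm{G}}\delta_{\mathrm{M}})1_{\mathrm{D}}$ and $1_{\mathrm{G}\mathrm{M}}\delta_{\mathrm{D}}$; these are exactly the two maps whose kernel defines $(\mathrm{G}\mathrm{M})\square_{\mathrm{C}}\mathrm{D}=\Phi(\mathrm{G}\mathrm{M})$, because $1_{\mathrm{G}}\delta_{\mathrm{M}}$ is the coaction of the $\mathrm{C}$-comodule $\mathrm{G}\mathrm{M}$. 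The universal property of kernels then yields a canonical isomorphism $\eta_{\mathrm{G},\mathrm{M}}\colon \mathrm{G}(\mathrm{M}\square_{\mathrm{C}}\mathrm{D})\overset{\sim}{\to}(\mathrm{G}\mathrm{M})\square_{\mathrm{C}}\mathrm{D}$, natural in $\mathrm{M}$.

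Next I would verify that $\eta_{\mathrm{G},\mathrm{M}}$ respects the right $\mathrm{D}$-comodule structures, i.e.\ that it is a morphism in $\mathrm{comod}_{\underline{\ccC}}(\mathrm{D})$ and not merely in $\underline{\cC}$. The $\mathrm{D}$-coaction on the cotensor product is the map induced in \eqref{ndiag2} from $1_{\mathrm{M}}\Delta_{\mathrm{D}}$ (the regular right coaction of $\mathrm{D}$); applying $1_{\mathrm{G}}\circ_{\mathrm{h}}{}_-$ to \eqref{ndiag2} and again using associativity of $\circ_{\mathrm{h}}$, the inducing map becomes $1_{\mathrm{G}}(1_{\mathrm{M}}\Delta_{\mathrm{D}})=1_{\mathrm{G}\mathrm{M}}\Delta_{\mathrm{D}}$, which is precisely the map inducing the $\mathrm{D}$-coaction on $(\mathrm{G}\mathrm{M})\square_{\mathrm{C}}\mathrm{D}$, so uniqueness of the induced map forces the two coactions to agree under $\eta_{\mathrm{G},\mathrm{M}}$. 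Finally, the coherence axioms --- compatibility of $\eta_{\mathrm{G}}$ with $2$-morphisms $\mathrm{G}\Rightarrow\mathrm{G}'$, compatibility of $\eta_{\mathrm{G}'\mathrm{G}}$ with $\eta_{\mathrm{G}'}$ and $\eta_{\mathrm{G}}$ under composition, and the normalisation $\eta_{\mathbbm{1}}=\mathrm{id}$ --- all follow formally, since every $\eta_{\mathrm{G},\mathrm{M}}$ is the unique map induced on kernels, and each axiom equates two maps into the relevant target that coincide after composing with the defining monomorphism $\xi_{\mathrm{M}}$. I expect the only genuinely delicate point to be the bookkeeping in this last coherence check; the substantive content is already packaged in the exactness of the $\cC$-action, which reduces the whole construction to the functoriality and uniqueness of kernels.
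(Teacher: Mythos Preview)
Your argument is correct and is essentially the direct approach the paper states in its opening sentence: exactness of the $\cC$-action on $\underline{\cC}$ means that $\mathrm{G}\circ_{\mathrm{h}}{}_-$ preserves the equalizer defining the cotensor product, giving the isomorphism $\mathrm{G}(\mathrm{M}\square_{\mathrm{C}}\mathrm{D})\cong(\mathrm{G}\mathrm{M})\square_{\mathrm{C}}\mathrm{D}$ at once. The paper then also offers an alternative route via doctrinal adjunction \`a la Kelly: from the adjunction $(\Psi,\Phi)$ and strictness of $\Psi$ one gets a canonical comparison map $\mathrm{F}\Phi(\mathrm{Y})\to\Phi(\mathrm{F}\mathrm{Y})$, shows it is monic using Lemma~\ref{lemmarco3}, and then proves it is an isomorphism by a Hom-dimension count using the adjoint $\mathrm{F}^{\star}$. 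Your approach is more elementary and avoids invoking the adjunction or the fiat structure beyond exactness; the paper's alternative has the advantage of fitting into a general formal framework (right adjoints of strong morphisms are lax, and one then checks the lax structure is invertible), which is the pattern reused later in the proof of Theorem~\ref{thmmain}.
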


\begin{proof}
Taking into account the exactness of the action of $\cC$,
the assertion follows from the construction and 
the fact that $\Psi$ is a strict  homomorphism of $\cC$-modules.
Alternatively, one can argue as \cite{Ke} that $\Phi$ has a canonical structure of a
homomorphism of $\cC$-modules
\begin{equation}\label{diag59}
\mathrm{F}\Phi(\mathrm{Y})\cong \Phi\Psi(\mathrm{F}\Phi(\mathrm{Y}))=
\Phi(\mathrm{F}\Psi\Phi(\mathrm{Y}))\to\Phi(\mathrm{F}\,\mathrm{Y}) 
\end{equation}
given by adjunction and the fact that $\Psi$ is a strict homomorphism of 
$2$-representations of $\cC$. Lemma~\eqref{lemmarco3}
implies that the right map in \eqref{diag59}
is monic. For $\mathrm{X}\in \mathrm{inj}_{\underline{\ccC}}(\mathrm{D})$, we have
\begin{displaymath}
\begin{array}{ccc}
\mathrm{Hom}_{\mathrm{inj}_{\underline{\ccC}}(\mathrm{D})}(\mathrm{X},\Phi(\mathrm{F}\,\mathrm{Y}))
&\cong&\mathrm{Hom}_{\mathrm{inj}_{\underline{\ccC}}(\mathrm{D})}(\Psi(\mathrm{X}),\mathrm{F}\,\mathrm{Y})\\
&\cong&\mathrm{Hom}_{\mathrm{inj}_{\underline{\ccC}}(\mathrm{D})}(\mathrm{F}^{\star}\,\Psi(\mathrm{X}),\mathrm{Y})\\
&\cong&\mathrm{Hom}_{\mathrm{inj}_{\underline{\ccC}}(\mathrm{D})}(\Psi(\mathrm{F}^{\star}\,\mathrm{X}),\mathrm{Y})\\
&\cong&\mathrm{Hom}_{\mathrm{inj}_{\underline{\ccC}}(\mathrm{D})}(\mathrm{F}^{\star}\,\mathrm{X},\Phi(\mathrm{Y}))\\
&\cong&\mathrm{Hom}_{\mathrm{inj}_{\underline{\ccC}}(\mathrm{D})}(\mathrm{X},\mathrm{F}\,\Phi(\mathrm{Y})).
\end{array}
\end{displaymath}
This shows that $\mathrm{F}\Phi(\mathrm{Y})$ is isomorphic to $\Phi(\mathrm{F}\,\mathrm{Y})$ and, by finite-dimensionality of morphisms spaces, the monic right map in \eqref{diag59} is, in fact, and isomorphism. 
The claim follows.
\end{proof}

\subsection{An alternative description of $\Phi$}\label{snew1.455}

Let $\cC$ be a fiat $2$-category. Consider two coalgebra $1$-morphisms  
$\mathrm{C}=(\mathrm{C},\tilde{\Delta},\tilde{\varepsilon})$ and 
$\mathrm{D}=(\mathrm{D},\Delta,\varepsilon)$ 
in $\underline{\cC}$. Consider also a monomorphism $\iota:\mathrm{D}\to\mathrm{C}$ 
of coalgebra $1$-morphisms and let $(\mathrm{J},\kappa)$ be the cokernel of $\iota$.
Then $\mathrm{J}$ is a {\em coideal} of $\mathrm{C}$ in the sense that there
exist $2$-morphisms $\lambda:\mathrm{J}\to \mathrm{C}\mathrm{J}$ and
$\rho:\mathrm{J}\to \mathrm{J}\mathrm{C}$ such that the following
diagrams commute:
\begin{displaymath}
\xymatrix{
\mathrm{C}\ar[d]_{\kappa}\ar[r]^{\tilde{\Delta}} & \mathrm{C}\mathrm{C}\ar[d]^{1\kappa}\\
\mathrm{J}\ar[r]^{\lambda} & \mathrm{C}\mathrm{J}
}\qquad\text{ and }\qquad
\xymatrix{
\mathrm{C}\ar[d]_{\kappa}\ar[r]^{\tilde{\Delta}} & \mathrm{C}\mathrm{C}\ar[d]^{\kappa 1}\\
\mathrm{J}\ar[r]^{\rho} & \mathrm{J}\mathrm{C}.
}
\end{displaymath}
Note that we have the exact sequence
\begin{displaymath}
\xymatrix{
0\ar[r]&\mathrm{D}\ar[r]^{\iota}&\mathrm{C}\ar[r]^{\kappa}&\mathrm{J}\ar[r]&0.
}
\end{displaymath}

Let $(\mathrm{M},\delta_{\mathrm{M}})$ be a $\mathrm{D}$-comodule $1$-morphism and
$(\mathrm{N},\delta_{\mathrm{N}})$ be a $\mathrm{C}$-comodule $1$-morphism.
Let $\varphi:\mathrm{M}\to \mathrm{N}$ be a morphism of  $\mathrm{C}$-comodule $1$-morphisms from
$\Psi(\mathrm{M})$ to $\mathrm{N}$. Consider the following diagram:
\begin{equation}\label{diag72}
\xymatrix{
&\mathrm{M}\ar[dd]_{\varphi}\ar[rr]^{\delta_{\mathrm{M}}}\ar@/_2pc/@{.>}[dddl]_{\xi_0}&&
\mathrm{M}\mathrm{D}\ar@{^{(}->}[rr]^{1_\mathrm{M}\iota}
\ar@/_2pc/@{.>}[dddddl]_>>>>>>>>>>>>>>>>>>>>>>{\xi_0 1_{\mathrm{D}}}
\ar[dddd]^<<<<<<<<<<<<<{\varphi 1_{\mathrm{D}}}&&
\mathrm{M}\mathrm{C}\ar[rr]^{1_\mathrm{M}\kappa}\ar[dd]^{\varphi 1_{\mathrm{C}}}&&
\mathrm{M}\mathrm{J}\ar[dd]^{\varphi 1_{\mathrm{J}}}\\
&&\mathrm{N}/\mathrm{K}_0&&&&&\\
&\mathrm{N}\ar[rrrr]^>>>>>>>>>{\delta_{\mathrm{N}}}\ar@{-->>}[ru]_{\pi_0}&&&&
\mathrm{N}\mathrm{C}\ar[rr]^{1_\mathrm{N} \kappa}&&\mathrm{N}\mathrm{J}\\
\mathrm{K}_0\ar@{^{(}-->}[ru]_{\tau_0}\ar@{.>}[rrrd]_<<<<<{\zeta_0}&&&&\mathrm{N}/\mathrm{K}_0\mathrm{D}&&&\\
&&&\mathrm{N}\mathrm{D}\ar@{-->}[ru]^>>>>>{\pi_0 1_{\mathrm{D}}}
\ar@/_2pc/@{_{(}->}[rruu]_{1_\mathrm{N} \iota}&&&&\\
&&\mathrm{K}_0\mathrm{D}\ar@{^{(}-->}[ru]_{\tau_0 1_{\mathrm{D}}}&&&&&
}
\end{equation}
The solid part of this diagram commutes by our assumptions. We define $(\mathrm{K}_0,\tau_0)$ as the kernel of
$(1_\mathrm{N}\kappa)\circ_{\mathrm{v}} \delta_{\mathrm{N}}$ with 
$(\mathrm{N}/\mathrm{K}_0,\pi_0)$ being the cokernel of $\tau_0$.
This gives the dashed parts of the diagram.
As the solid part commutes, the map from $\mathrm{M}$ to 
$\mathrm{N}\mathrm{J}$ is zero and hence factors through $\mathrm{K}_0$
by the universal property of kernels.
Next we note that the solid sequence
\begin{displaymath}
\mathrm{N}\mathrm{D}\hookrightarrow \mathrm{N}\mathrm{C}\longrightarrow \mathrm{N}\mathrm{J} 
\end{displaymath}
is left exact. Since the map from $\mathrm{K}_0$ to $\mathrm{N}\mathrm{J}$ is zero by definition of 
$\mathrm{K}_0$, this map factors
through $\mathrm{N}\mathrm{D}$ by the universal property of kernels. 
This  gives the dotted part of the diagram and the whole diagram commutes.

We would like to construct a $\mathrm{D}$-subcomodule $1$-morphism of $\mathrm{N}$. However, note that $\mathrm{K}_0$ is not yet a 
$\mathrm{D}$-comodule $1$-morphism as we have a map from $\mathrm{K}_0$ to $\mathrm{N}\mathrm{D}$, while we need a map from
$\mathrm{K}_0$ to $\mathrm{K}_0\mathrm{D}$. Therefore we need to do more.

Let $(\mathrm{K}_1,\tau_1)$ be the kernel of $(\pi_01_{\mathrm{D}})\circ_{\mathrm{v}} \zeta_0$.
By commutativity of \eqref{diag72}, the map from $\mathrm{M}$ to $\mathrm{N}/\mathrm{K}_0\mathrm{D}$ factors through 
$\mathrm{K}_0\mathrm{D}$ and hence is zero. Therefore $\xi_0$ factors via $\mathrm{K}_1$ through some map
$\xi_1:\mathrm{M}\to \mathrm{K}_1$. Moreover, as the sequence
\begin{displaymath}
\mathrm{K}_0\mathrm{D}\hookrightarrow \mathrm{N}\mathrm{D}\longrightarrow \mathrm{N}/\mathrm{K}_0\mathrm{D} 
\end{displaymath}
is left exact, the map from $\mathrm{K}_1$ to $\mathrm{N}\mathrm{D}$ factors through some $\zeta_1:\mathrm{K}_1\to \mathrm{K}_0\mathrm{D}$.
Now we can continue this and construct a sequence of monomorphisms
\begin{displaymath}
\dots \mathrm{K}_2 \overset{\tau_2}{\hookrightarrow}  \mathrm{K}_1\overset{\tau_1}{\hookrightarrow} \mathrm{K}_0, 
\end{displaymath}
with corresponding cokernels $(\mathrm{K}_i/\mathrm{K}_{i+1},\pi_i)$ together with factorization
maps 
\begin{displaymath}
\xi_i:\mathrm{M}\to \mathrm{K}_i\qquad\text{ and }\qquad\zeta_i:\mathrm{K}_i\to \mathrm{K}_{i-1}\mathrm{D} 
\end{displaymath}
such that the obtained infinite
diagram commutes.

As $\cC$ is finitary, this sequence must stabilize, that is $\mathrm{K}_i=\mathrm{K}_{i+1}$, for some
$i$. In this case $\mathrm{K}_i$ becomes a $\mathrm{D}$-comodule $1$-morphism by construction. We can now set 
$\tilde{\Phi}(\mathrm{N}):=\mathrm{K}_i$ and by functoriality of the above construction this defines a functor from
$\mathrm{comod}_{\underline{\ccC}}(\mathrm{C})$ to  $\mathrm{comod}_{\underline{\ccC}}(\mathrm{D})$.
The construction also automatically gives a functorial isomorphism
\begin{displaymath}
\mathrm{Hom}_{\mathrm{comod}_{\underline{\ccC}}(\mathrm{C})}(\Psi(\mathrm{M}),\mathrm{N})\cong
\mathrm{Hom}_{\mathrm{comod}_{\underline{\ccC}}(\mathrm{D})}(\mathrm{M},\tilde{\Phi}(\mathrm{N}))
\end{displaymath}
making $(\Psi,\tilde{\Phi})$ into an adjoint pair of functors.
By uniqueness of adjoint functors, the functor $\tilde{\Phi}$ defined in this
subsection is isomorphic to the functor ${}_-\square_{\mathrm{C}}\mathrm{D}$ from Subsection~\ref{snew1.45}.

\subsection{Cosimple coalgebra $1$-morphisms and simple transitive $2$-representations}\label{snew1.5}

From the above, we deduce a useful corollary strengthening \cite[Corollary~11]{MMMT}.

\begin{corollary}\label{simple}
Let $\mathbf{M}$ be a transitive $2$-representation and $X\in \mathbf{M}(\mathtt{i})$. Consider the coalgebra $1$-morphism $\mathrm{C}$ associated to $\mathbf{M}$ and $X$. Then $\mathrm{C}$ is cosimple if and only if $\mathbf{M}$ is simple transitive.
\end{corollary}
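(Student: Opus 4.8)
The plan is to prove both implications by contraposition, exploiting the two complementary pieces of machinery assembled above: the coalgebra monomorphism $\alpha\colon\mathrm{D}\to\mathrm{C}$ attached to the simple transitive quotient (Lemma~\ref{alphabeta}, Proposition~\ref{coalgebramorphism}) for one direction, and the cotensor adjunction $\Psi\dashv\Phi$ attached to a subcoalgebra (Corollaries~\ref{cormarco2},~\ref{cormarco5} and Proposition~\ref{propmarco6}) for the other. Throughout I read ``$\mathrm{C}$ cosimple'' as ``$\mathrm{C}$ admits no coalgebra monomorphism $\mathrm{D}\hookrightarrow\mathrm{C}$ with $0\neq\mathrm{D}\neq\mathrm{C}$''; the content of the corollary is to match this with the absence of proper nonzero $\cC$-invariant ideals in $\mathbf{M}$.

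For the implication ``$\mathrm{C}$ cosimple $\Rightarrow$ $\mathbf{M}$ simple transitive'', I argue contrapositively. If $\mathbf{M}$ is not simple transitive, then its simple transitive quotient $\pi\colon\mathbf{M}\twoheadrightarrow\mathbf{N}$ fails to be an equivalence, since the unique maximal ideal is nonzero. Applying Lemma~\ref{alphabeta} and Proposition~\ref{coalgebramorphism} to $\pi$ produces a coalgebra monomorphism $\alpha\colon\mathrm{D}\to\mathrm{C}$ with $\mathrm{D}\neq 0$, where $\mathrm{D}$ is the coalgebra attached to $\mathbf{N}$. It remains to see that $\alpha$ is not an isomorphism. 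If it were, then by Lemma~\ref{commutes} the epimorphism $\pi$ would restrict to an isomorphism $\mathrm{Hom}_{\underline{\mathbf{M}}(\mathtt{i})}(X,\mathrm{F}\,X)\to\mathrm{Hom}_{\underline{\mathbf{N}}(\mathtt{i})}(X,\mathrm{F}\,X)$ for every $\mathrm{F}$. Since $\mathbf{M}$ is transitive, $X$ generates, and using the adjunctions $(\mathrm{F},\mathrm{F}^{\star})$ available because $\cC$ is fiat, every morphism space $\mathrm{Hom}(A,B)$ between indecomposable objects is a direct summand of some $\mathrm{Hom}(X,\mathrm{G}^{\star}\mathrm{F}\,X)$; hence $\pi$ would be faithful, and being full and dense, an equivalence, a contradiction. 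Thus $\alpha$ exhibits a proper nonzero subcoalgebra and $\mathrm{C}$ is not cosimple.

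For the converse ``$\mathbf{M}$ simple transitive $\Rightarrow$ $\mathrm{C}$ cosimple'', again contrapositively, suppose $\mathrm{C}$ is not cosimple, so there is a coalgebra monomorphism $\iota\colon\mathrm{D}\to\mathrm{C}$ with $0\neq\mathrm{D}\neq\mathrm{C}$. This is exactly the input of Subsection~\ref{snew1.45}, yielding $\Psi\dashv\Phi$ with $\Phi={}_-\square_{\mathrm{C}}\mathrm{D}$. By Proposition~\ref{propmarco6}, $\Phi\colon\mathbf{M}=\mathrm{inj}_{\underline{\ccC}}(\mathrm{C})\to\mathrm{inj}_{\underline{\ccC}}(\mathrm{D})$ is a homomorphism of $2$-representations; it is full on injectives by Corollary~\ref{cormarco5}, and it is nonzero since $\Phi\Psi\cong\mathrm{id}$ by Corollary~\ref{cormarco2} while $\mathrm{D}\neq 0$. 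The morphisms killed by $\Phi$ form a $\cC$-invariant ideal of $\mathbf{M}$; as $\Phi\neq 0$ it is proper, and simple transitivity of $\mathbf{M}$ forces it to be zero, so $\Phi$ is faithful. Being a full and faithful right adjoint, $\Phi$ has invertible counit; by Corollary~\ref{cormarco4} this counit is the natural transformation $\beta$ of Lemma~\ref{lemmarco3}, so $\beta$ is an isomorphism. Evaluating at the regular comodule $\mathrm{C}$ and using $\mathrm{C}\square_{\mathrm{C}}\mathrm{D}\cong\mathrm{D}$ (as in Lemma~\ref{lemnnn2}) identifies $\beta_{\mathrm{C}}$ with $\iota$ up to isomorphism; but $\iota$ is a proper monomorphism, so $\beta_{\mathrm{C}}$ cannot be invertible, a contradiction. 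Hence $\mathrm{C}$ is cosimple.

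The routine points above are the two transitivity reductions and the bookkeeping of the internal-hom isomorphisms. The main obstacle, and the step I would treat most carefully, is the identification of the counit $\beta_{\mathrm{C}}$ with the inclusion $\iota$: one must pin down the isomorphism $\Phi(\mathrm{C})=\mathrm{C}\square_{\mathrm{C}}\mathrm{D}\cong\mathrm{D}$ of right $\mathrm{D}$-comodule $1$-morphisms and trace $(1_{\mathrm{C}}\varepsilon)\circ_{\mathrm{v}}\xi_{\mathrm{C}}$ through it, verifying that the counit recovers $\iota$ and not some other monomorphism. This is exactly where properness of the subcoalgebra gets converted into non-invertibility, so it is the linchpin of the converse direction; the standard fact that an adjoint pair with invertible unit and counit is an equivalence then does the rest.
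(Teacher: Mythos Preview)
Your overall strategy coincides with the paper's: both directions are argued by contraposition, the first via the coalgebra monomorphism $\alpha$ of Proposition~\ref{coalgebramorphism}, the second via the functor $\Phi={}_-\square_{\mathrm{C}}\mathrm{D}$ being full on injectives. Your write-up is in fact more detailed than the paper's, which simply asserts that $\iota$ not being an isomorphism forces $\mathrm{inj}_{\underline{\ccC}}(\mathrm{D})$ to be a \emph{proper} quotient of $\mathrm{inj}_{\underline{\ccC}}(\mathrm{C})$.

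There is, however, one genuine gap in your converse direction. You invoke the standard fact ``a full and faithful right adjoint has invertible counit'', but the adjunction $\Psi\dashv\Phi$ lives between the full abelian categories $\mathrm{comod}_{\underline{\ccC}}(\mathrm{D})$ and $\mathrm{comod}_{\underline{\ccC}}(\mathrm{C})$, whereas Corollary~\ref{cormarco5} only gives fullness on \emph{injectives}, and your faithfulness argument (kernel of $\Phi$ is a $\cC$-invariant ideal of $\mathbf{M}=\mathrm{inj}_{\underline{\ccC}}(\mathrm{C})$) likewise only applies on injectives. Since $\Psi$ need not preserve injectives, the adjunction does not obviously restrict, and the blanket conclusion ``$\beta$ is an isomorphism'' is unjustified.

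The gap is easily repaired, and you only need $\beta_{\mathrm{C}}$ to be invertible. Since $\mathrm{C}$ is injective in $\mathrm{comod}_{\underline{\ccC}}(\mathrm{C})$ and $\beta_{\mathrm{C}}$ is monic by Lemma~\ref{lemmarco3}, it splits: $\mathrm{C}\cong\Psi\Phi(\mathrm{C})\oplus\mathrm{Q}$ with $\mathrm{Q}$ a summand of $\mathrm{C}$, hence injective. The triangle identity together with Corollary~\ref{cormarco2} shows $\Phi(\beta_{\mathrm{C}})$ is an isomorphism, so $\Phi(\mathrm{Q})=0$; faithfulness of $\Phi$ on injectives then forces $\mathrm{Q}=0$, and $\beta_{\mathrm{C}}$ is an isomorphism as desired. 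Your identification of $\beta_{\mathrm{C}}$ with $\iota$ under $\mathrm{C}\square_{\mathrm{C}}\mathrm{D}\cong\mathrm{D}$ (the left-comodule analogue of Lemma~\ref{lemnnn2}) is correct and is indeed the linchpin; a one-line check shows the composite $\mathrm{D}\xrightarrow{(\iota 1_{\mathrm{D}})\Delta}\mathrm{C}\mathrm{D}\xrightarrow{1_{\mathrm{C}}\varepsilon}\mathrm{C}$ equals $\iota$ by counitality.
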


\begin{proof}
Assume $\mathrm{C}$ is cosimple and $\mathbf{M}$ is not simple transitive.
Let $\mathbf{N}$ be the simple transitive quotient of $\mathbf{M}$.
Then the natural projection $\mathbf{M}\tto \mathbf{N}$ is not an equivalence.
Therefore the homomorphism between the coalgebra $1$-morphisms $\mathrm{C}$ and $\mathrm{D}$,
for $\mathbf{M}$ and $\mathbf{N}$, respectively, given by  Proposition \ref{coalgebramorphism}
is not an isomorphism. Therefore $\mathrm{C}$ is not cosimple, a contradiction.

Assume now that $\mathrm{C}$ is not cosimple, then we have a monomorphism
\begin{equation}\label{daig51}
\iota:\mathrm{D}\hookrightarrow\mathrm{C} 
\end{equation}
for some coalgebra $1$-morphism $\mathrm{D}$ such that $\iota$ is not an isomorphism. 
Consider the functor ${}_-\square_{\mathrm{C}}\mathrm{D}$ from Subsection~\ref{snew1.45}.  
As $\iota$ is not an isomorphism and, by Corollary \ref{cormarco5}, ${}_-\square_{\mathrm{C}}\mathrm{D}$ is full when restricted
to injective comodule $1$-morphisms, we obtain that $\mathrm{inj}_{\underline{\ccC}}(\mathrm{D})$
is a proper quotient of $\mathrm{inj}_{\underline{\ccC}}(\mathrm{C})$, and hence  
$\mathrm{inj}_{\underline{\ccC}}(\mathrm{C})$ is not simple transitive.
The claim follows.
\end{proof}
  
\section{Centralizers in fiat $2$-categories}\label{s25}

\subsection{A general observation}\label{s2.5}

The following statement is similar in spirit to \cite[Corollary~12]{MMMT}.

\begin{proposition}\label{prop1}
Let $\cC$ be a fiat $2$-category and $\cA$ a $2$-subcategory of $\cC$. We assume that
\begin{enumerate}[$($a$)$]
\item\label{prop1.1} on the $2$-level, $\cA$ is $2$-full;
\item\label{prop1.2} on the $1$-level, $\cA$ is isomorphism closed and fully additively closed 
(in the sense of both being additively closed and idempotent complete);
\item\label{prop1.3} $\cA$ is stable under the weak involution $\star$ of $\cC$.
\end{enumerate}
Then every simple transitive $2$-representation of $\cA$ is equivalent to a simple transitive 
subquotient of the restriction, from $\cC$ to $\cA$, of a simple transitive $2$-representation of $\cC$.
\end{proposition}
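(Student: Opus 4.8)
The plan is to run the whole argument through the coalgebra $1$-morphism description of transitive $2$-representations from Subsection~\ref{s2.4} and the weak Jordan--H\"older formalism of \cite[Section~4]{MM5}. First I would note that hypotheses (a)--(c) make $\cA$ into a fiat $2$-category sharing its identity $1$-morphisms with $\cC$: the $2$-fullness in (a) together with the additive and idempotent closure in (b) ensures that each $\cA(\mathtt{i},\mathtt{j})$ is of the form $A'$-proj and keeps the identity $1$-morphisms indecomposable, while the $\star$-stability in (c) supplies, for every $1$-morphism $\mathrm{F}$ of $\cA$, the adjunction $(\mathrm{F},\mathrm{F}^{\star})$ whose unit and counit $2$-morphisms again lie in $\cA$ by (a). Hence \cite[Theorem~9]{MMMT} and Corollary~\ref{simple} apply to $\cA$: given a simple transitive $2$-representation $\mathbf{N}$ of $\cA$ and a nonzero $X\in\mathbf{N}(\mathtt{i})$, I get $\mathbf{N}\simeq\mathrm{inj}_{\underline{\ccA}}(\mathrm{C})$ for the coalgebra $1$-morphism $\mathrm{C}$ in $\underline{\cA}$ attached to $(\mathbf{N},X)$, and $\mathrm{C}$ is cosimple. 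Since $\cA\hookrightarrow\cC$ is a $2$-functor, it induces a horizontal-composition-compatible $2$-functor $\underline{\cA}\to\underline{\cC}$, so $\mathrm{C}$ with its comultiplication and counit is also a coalgebra $1$-morphism in $\underline{\cC}$; I then form the finitary $2$-representation $\mathbf{M}':=\mathrm{inj}_{\underline{\ccC}}(\mathrm{C})$ of $\cC$.

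The crux is to recognise $\mathbf{N}$ inside $\mathrm{Res}^{\ccC}_{\ccA}\mathbf{M}'$. Every object of $\mathrm{inj}_{\underline{\ccA}}(\mathrm{C})$ is a summand of a cofree comodule $\mathrm{F}\mathrm{C}$ with $\mathrm{F}$ a $1$-morphism of $\cA$, and $\mathrm{F}\mathrm{C}$ is cofree, hence injective, also as a $\mathrm{C}$-comodule in $\underline{\cC}$; so these objects genuinely belong to $\mathrm{inj}_{\underline{\ccC}}(\mathrm{C})$. A homomorphism of $\mathrm{C}$-comodules between two such objects is a morphism in the abelianization intertwining the two coactions, and since the objects and their coactions already live in $\underline{\cA}$ while (a) makes $\underline{\cA}\to\underline{\cC}$ full, every such homomorphism computed in $\underline{\cC}$ already lies in $\underline{\cA}$. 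Thus the inclusion $\mathrm{inj}_{\underline{\ccA}}(\mathrm{C})\hookrightarrow\mathrm{inj}_{\underline{\ccC}}(\mathrm{C})$ is fully faithful; it is $\cA$-equivariant and closed under the $\cA$-action because $\mathrm{F}'\mathrm{F}\in\cA$ whenever $\mathrm{F}',\mathrm{F}\in\cA$. Consequently $\mathbf{N}\simeq\mathrm{inj}_{\underline{\ccA}}(\mathrm{C})$ is a sub-$2$-representation of $\mathrm{Res}^{\ccC}_{\ccA}\mathbf{M}'$, and being simple transitive it occurs as a simple transitive subquotient of $\mathrm{Res}^{\ccC}_{\ccA}\mathbf{M}'$.

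It then remains to descend to a single simple transitive $2$-representation of $\cC$. I would fix a weak Jordan--H\"older series of $\mathbf{M}'$ as a $2$-representation of $\cC$, with simple transitive subquotients $\mathbf{M}_1,\dots,\mathbf{M}_k$. As restriction from $\cC$ to $\cA$ is exact and carries $\cC$-stable ideals to $\cA$-stable ideals, refining each $\mathrm{Res}^{\ccC}_{\ccA}\mathbf{M}_s$ by its own weak Jordan--H\"older series yields a weak Jordan--H\"older series of $\mathrm{Res}^{\ccC}_{\ccA}\mathbf{M}'$ whose constituents are precisely the simple transitive $\cA$-subquotients of the various $\mathrm{Res}^{\ccC}_{\ccA}\mathbf{M}_s$. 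By the uniqueness statement in \cite[Section~4]{MM5}, $\mathbf{N}$ is equivalent to one of these constituents, so $\mathbf{N}$ is a simple transitive subquotient of $\mathrm{Res}^{\ccC}_{\ccA}\mathbf{M}_s$ for some $s$; taking $\mathbf{M}:=\mathbf{M}_s$ completes the argument.

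The hard part will be the second paragraph, and specifically the two points that hypothesis (a) has to carry: that the cofree $\mathrm{C}$-comodules assembled inside $\cA$ stay injective once the ambient $2$-category is enlarged to $\cC$, and that no new comodule homomorphisms are created in the passage from $\underline{\cA}$ to $\underline{\cC}$. The second point is where $2$-fullness is genuinely indispensable, and I would want to double-check it against the precise construction of the abelianizations $\underline{\cA}$ and $\underline{\cC}$ rather than treat it as a formality; everything else is either an inheritance check for the fiat structure or a direct appeal to the weak Jordan--H\"older machinery.
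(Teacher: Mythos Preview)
Your proof is correct and follows essentially the same route as the paper: realise $\mathbf{N}$ as $\mathrm{inj}_{\underline{\ccA}}(\mathrm{C})$ via \cite[Theorem~9]{MMMT}, regard $\mathrm{C}$ as a coalgebra $1$-morphism in $\underline{\cC}$ using $2$-fullness, embed $\mathrm{inj}_{\underline{\ccA}}(\mathrm{C})$ fully faithfully into $\mathrm{inj}_{\underline{\ccC}}(\mathrm{C})$, and locate $\mathbf{N}$ in a simple transitive $\cC$-subquotient. The only cosmetic difference is your final step: the paper singles out directly the unique simple transitive subquotient of $\mathrm{inj}_{\underline{\ccC}}(\mathrm{C})$ in which the objects of $\mathrm{inj}_{\underline{\ccA}}(\mathrm{C})$ survive (and then observes that the image of $\mathbf{N}$ there is still $\mathbf{N}$ by simple transitivity), whereas you route through the full weak Jordan--H\"older uniqueness; both arguments arrive at the same conclusion, and your extra invocation of Corollary~\ref{simple} is harmless but not needed here.
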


\begin{proof}
Let $\mathbf{M}$ be a simple transitive $2$-representation of $\cA$.  
Note that the $2$-category $\cA$ is fiat due to assumption~\eqref{prop1.3}.
Therefore, by \cite[Theorem~9]{MMMT},
there is a coalgebra $1$-morphism $\mathrm{C}\in\underline{\cA}$ such that $\mathbf{M}$ is equivalent to
$\mathrm{inj}_{\underline{\ccA}}(\mathrm{C})$. Thanks to assumptions~\eqref{prop1.1} and \eqref{prop1.2},
the $2$-category $\underline{\cA}$ is, naturally, a full subcategory of  $\underline{\cC}$. 
This implies that $\mathrm{C}$ is also a coalgebra $1$-morphism in  $\underline{\cC}$
and $\mathrm{inj}_{\underline{\ccA}}(\mathrm{C})$ is, 
naturally, a full subcategory of $\mathrm{inj}_{\underline{\ccC}}(\mathrm{C})$.
The latter is a finitary $2$-representation of $\cC$. It has a unique simple transitive
subquotient $\mathbf{N}$ which contains $\mathrm{inj}_{\underline{\ccA}}(\mathrm{C})$
and in which the objects of $\mathrm{inj}_{\underline{\ccA}}(\mathrm{C})$ are non-zero.
Therefore $\mathbf{M}$ is equivalent to a simple transitive 
subquotient of the restriction of $\mathbf{N}$ from $\cC$ to $\cA$.
\end{proof}

\subsection{The special case of an $\mathcal{H}$-cell}\label{s2.6}

In this subsection, we strengthen the claim of Proposition~\ref{prop1} for a very special $2$-subcategory $\cA$ of $\cC$.
We assume that $\cC$ is a fiat $2$-category with a unique maximal two-sided cell $\mathcal{J}$ with respect to the two-sided order. By \cite[Corollary~19]{KM}, the two-sided cell $\mathcal{J}$ is
regular. Let $\mathcal{L}$ be a left cell in $\mathcal{J}$ and 
$\mathrm{G}$ a Duflo involution in $\mathcal{L}$. Finally, set  $\mathcal{H}:=\mathcal{L}\cap \mathcal{L}^{\star}$ 
(here $\mathcal{L}^{\star}$ can be alternatively described as the right cell containing $\mathrm{G}$). 
Let $\cA=\cA_{\mathcal{H}}$ be the $2$-full $2$-subcategory of $\cC$ generated by all $1$-morphisms in $\mathcal{H}$
together with the (unique) relevant identity morphism, that is, the identity $1$-morphism $\mathbbm{1}_{\mathtt{i}}$, where $\mathtt{i}$ is the domain of all $1$-morphisms in $\mathcal{L}$. Then $\cA$ is fiat and its indecomposable 
$1$-morphisms are $\mathbbm{1}_{\mathtt{i}}$ and those in $\mathcal{H}$. Moreover,
$\mathcal{H}$ is a two-sided cell in $\cA$.

\begin{proposition}\label{prop1n}
In the above setup, let $\mathbf{M}$ be a simple transitive $2$-rep\-re\-sen\-ta\-ti\-on of $\cC$ 
with apex $\mathcal{J}$. Then the restriction of $\mathbf{M}$ to $\cA$ contains a unique simple transitive
subquotient with apex $\mathcal{H}$.
\end{proposition}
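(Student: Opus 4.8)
The plan is to study the restriction $\mathbf{M}|_{\cA}$ through its weak Jordan--H\"older series and then, after decategorification, to match the simple transitive subquotients of apex $\mathcal{H}$ with the occurrences of the special simple module attached to the non-nilpotent cell $\mathcal{H}$. To begin, I would record the coarse structure: since $\cA$ is fiat and its only two-sided cells are the trivial cell $\{\mathbbm{1}_{\mathtt{i}}\}$ and $\mathcal{H}$, the restriction $\mathbf{M}|_{\cA}$ is a finitary $2$-representation of $\cA$, and by \cite[Section~4]{MM5} it has a weak Jordan--H\"older series whose subquotients are simple transitive $2$-representations of $\cA$, each of apex either $\{\mathbbm{1}_{\mathtt{i}}\}$ or $\mathcal{H}$. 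The content of the proposition is that exactly one subquotient has apex $\mathcal{H}$.

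For \emph{existence} I would show that $\mathcal{H}$ is not annihilated by $\mathbf{M}|_{\cA}$. Because $\mathbf{M}$ has apex $\mathcal{J}$, there is some $\mathrm{F}_0\in\mathcal{J}$ with $\mathbf{M}(\mathrm{F}_0)\neq 0$. Given an arbitrary $\mathrm{F}\in\mathcal{J}$, the fact that $\mathcal{J}$ is a single two-sided cell means that $\mathrm{F}_0$ is a direct summand of $\mathrm{H}_1\mathrm{F}\mathrm{H}_2$ for suitable $1$-morphisms $\mathrm{H}_1,\mathrm{H}_2$; hence $\mathbf{M}(\mathrm{F})=0$ would force $\mathbf{M}(\mathrm{F}_0)=0$, a contradiction. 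Thus every $1$-morphism of $\mathcal{J}$, in particular every $\mathrm{H}\in\mathcal{H}\subset\mathcal{J}$, acts as a non-zero functor, so $\mathcal{H}$ does not annihilate $\mathbf{M}|_{\cA}$ and at least one weak Jordan--H\"older subquotient must have apex $\mathcal{H}$.

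For \emph{uniqueness} I would decategorify. Let $\mathbb{A}=[\cA]$ be the positively based algebra on the classes of indecomposable $1$-morphisms, with basis $[\mathbbm{1}_{\mathtt{i}}]$ together with $\{[\mathrm{H}]\}_{\mathrm{H}\in\mathcal{H}}$, and let $V=[\mathbf{M}|_{\cA}]$ be the associated module on the classes of indecomposable objects; the action matrices have non-negative integer entries. Under decategorification a simple transitive $2$-representation of $\cA$ of apex $\mathcal{H}$ corresponds to the special simple $\mathbb{A}$-module $S_{\mathcal{H}}$ attached, in the sense of Subsection~\ref{s2.51} and \cite{KM}, to the non-nilpotent cell $\mathcal{H}$, and the weak Jordan--H\"older series of $\mathbf{M}|_{\cA}$ corresponds to a composition series of $V$. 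Hence the number of subquotients of apex $\mathcal{H}$ equals the multiplicity $[V:S_{\mathcal{H}}]$, and the claim reduces to proving this multiplicity equals $1$. This I would extract from the Perron--Frobenius mechanism underlying \cite{KM}: the cell sum $c=\sum_{\mathrm{H}\in\mathcal{H}}[\mathrm{H}]$ acts on $V$ by a non-negative matrix, and $S_{\mathcal{H}}$ is exactly the constituent carrying its Perron--Frobenius eigenvector.

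The hard part will be the multiplicity-one count, because $\mathbf{M}|_{\cA}$ need not be transitive; thus $V$ is not positively based in the strict sense and the clean uniqueness statement of Subsection~\ref{s2.51} does not apply to $V$ as it stands. To handle this I would pass to the subspace $V'$ spanned by the indecomposable objects lying in the Perron--Frobenius support of $c$ (equivalently, those not annihilated by $\mathcal{H}$), on which $c$ acts by an irreducible non-negative matrix, so that its Perron--Frobenius eigenvector is unique up to scaling and forces $[V':S_{\mathcal{H}}]=1$, while the remaining objects can only contribute subquotients of apex $\{\mathbbm{1}_{\mathtt{i}}\}$. The crucial input guaranteeing that this support is a \emph{single} irreducible block is the transitivity of $\mathbf{M}$ over all of $\cC$ together with the regularity of $\mathcal{J}$, which ties together all $\mathcal{H}$-supported objects; verifying this irreducibility is where the real work lies. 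As an alternative to the decategorified argument, one could instead try to realize the apex-$\mathcal{H}$ subquotient as $\mathrm{inj}_{\underline{\ccA}}(\mathrm{C}')$ for a coalgebra $1$-morphism $\mathrm{C}'$ in $\underline{\cA}$ obtained by truncating the cosimple coalgebra $\mathrm{C}$ of $\mathbf{M}$ to $\cA$, and deduce uniqueness from cosimplicity via Corollary~\ref{simple}; but the decategorification route is the more direct one given the preliminaries assembled above.
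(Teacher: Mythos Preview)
Your route via decategorification and Perron--Frobenius is genuinely different from the paper's, which stays entirely on the categorical level. The paper takes $\mathrm{F}$ to be the multiplicity-free sum of all indecomposables in $\mathcal{H}$, notes that $\mathrm{F}^{\star}\cong\mathrm{F}$ (since $\mathcal{H}=\mathcal{L}\cap\mathcal{L}^{\star}$ is $\star$-stable), and defines $\mathbf{N}$ as the additive closure of all $\mathrm{F}\,M$. Transitivity of $\mathbf{N}$ is then a two-line computation: regularity of $\mathcal{J}$ gives $\mathrm{F}\mathrm{H}\mathrm{F}\in\mathrm{add}(\mathrm{F})$ for every $1$-morphism $\mathrm{H}$, so $\mathrm{add}(\mathrm{F}\,M)=\mathrm{add}(\mathrm{F}\,M')$ whenever both are non-zero; self-adjointness $\mathrm{F}=\mathrm{F}^{\star}$ then guarantees $\mathrm{F}\,N\neq 0$ for every $N\in\mathbf{N}(\mathtt{i})$. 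A transitive $2$-representation has a unique simple transitive quotient, so uniqueness is immediate. Your ``hard part'' (irreducibility of the block of the matrix of $c$ on the $\mathcal{H}$-support) is precisely the decategorified shadow of this two-line argument, so your plan rediscovers the same key step after a detour through the Grothendieck group.

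One imprecision to fix if you pursue your approach: a simple transitive $2$-representation of $\cA$ with apex $\mathcal{H}$ does \emph{not} in general decategorify to the simple module $S_{\mathcal{H}}$; it decategorifies to a positively based $\mathbb{A}$-module, which then has $S_{\mathcal{H}}$ as its unique special composition factor with multiplicity one. Likewise, the weak Jordan--H\"older filtration does not decategorify to a composition series of $V$, only to a filtration whose graded pieces are positively based. With these corrections your counting argument goes through, but the paper's direct construction is both shorter and yields the explicit transitive subrepresentation $\mathbf{N}$, which is used again in Corollary~\ref{cor753} and the proof of Theorem~\ref{thmmain}.
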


\begin{proof}
Let $\mathrm{F}$ be a multiplicity free sum of representatives of isomorphism classes of indecomposable $1$-morphisms in $\mathcal{H}$.
Note that 
\begin{displaymath}
\mathcal{H}^{\star}=(\mathcal{L}\cap \mathcal{L}^{\star})^{\star}= \mathcal{L}\cap \mathcal{L}^{\star}=\mathcal{H},
\end{displaymath}
which implies that $\mathrm{F}^{\star}=\mathrm{F}$.
 
Consider $\mathbf{M}$ as a $2$-representation of $\cA$ by restriction. Let $\mathbf{N}$ be the $2$-subrepresentation 
given by the action of $\cA$ on the additive closure of all objects of the form $\mathrm{F}\,M$, where
$M\in \mathbf{M}(\mathtt{i})$. To prove our proposition, it remains to show that $\mathbf{N}$ is transitive.

First, we observe that  $\mathrm{add}(\mathrm{F}\,M)=\mathrm{add}(\mathrm{F}\,M')$, for all 
$M,M'\in \mathbf{M}(\mathtt{i})$ such that $\mathrm{F}\,M\neq 0$ and $\mathrm{F}\,M'\neq 0$.
Indeed, as $\mathrm{F}\,M'\neq 0$ and $\mathbf{M}$ is a transitive $2$-representation of 
$\cC$, there is a $1$-morphism $\mathrm{H}$ in $\cC$ such that $M$ belongs to 
$\mathrm{add}(\mathrm{H}\mathrm{F}\,M')$. This means that 
$\mathrm{add}(\mathrm{F}\,M)\subset \mathrm{add}(\mathrm{F}\mathrm{H}\mathrm{F}\,M')$.
However, $\mathrm{F}\mathrm{H}\mathrm{F}$ is in $\mathrm{add}(\mathrm{F})$ as $\mathcal{J}$ is regular and 
$\mathcal{H}:=\mathcal{L}\cap \mathcal{L}^{\star}$. Therefore 
$\mathrm{add}(\mathrm{F}\,M)\subset \mathrm{add}(\mathrm{F}\,M')$.
The opposite inclusion follows by symmetry.

Now we argue that the claim of the previous paragraph implies transitivity of $\mathbf{N}$.
From the previous paragraph, we have that $\mathbf{N}(\mathtt{i})$ coincides with
$\mathrm{add}(\mathrm{F}\,M)$, for any $M\in \mathbf{M}(\mathtt{i})$ such that $\mathrm{F}\,M\neq 0$.
Fix $M$ such that $\mathrm{F}\,M\neq 0$ and let $N\in \mathbf{N}(\mathtt{i})$. Then
\begin{displaymath}
0\neq \mathrm{Hom}_{\mathbf{M}(\mathtt{i})}(N,\mathrm{F}\,M)\cong
\mathrm{Hom}_{\mathbf{M}(\mathtt{i})}(\mathrm{F}\,N,M),
\end{displaymath}
as $\mathrm{F}=\mathrm{F}^{\star}$, which implies $\mathrm{F}\,N\neq 0$. By the previous paragraph,
$\mathrm{add}(\mathrm{F}\,N)=\mathbf{N}(\mathtt{i})$ which establishes transitivity of $\mathbf{N}$.
\end{proof}

\subsection{The main result}\label{s2.7}
Here we continue to work in the setup of Subsection~\ref{s2.6}.

Proposition~\ref{prop1n} defines a map, which we call $\Theta$, from the set of equivalence 
classes of simple transitive  $2$-representations of $\cC$ with apex $\mathcal{J}$ to the set 
of equivalence classes of simple transitive $2$-representations of $\cA$ with apex $\mathcal{H}$. 
Explicitly,  we take a simple transitive $2$-representation $\mathbf{M}$ of $\cC$ 
and define $\Theta(\mathbf{M})$  to be the simple transitive quotient of the 
$2$-subrepresentation of the restriction of $\mathbf{M}$ to $\cA$, given by the 
action of $\cA$ on the additive closure of all objects of the 
form $\mathrm{F}\,M$, for $M\in \mathbf{M}(\mathtt{i})$. 

Proposition~\ref{prop1} defines a map
$\Omega$ in the opposite direction as follows:
\begin{itemize}
\item take a simple transitive $2$-representation $\mathbf{N}$ 
of $\cA$ with apex $\mathcal{H}$;
\item let $X$ be a multiplicity free direct sum of representatives of isomorphism
classes of indecomposable objects in $\mathbf{N}(\mathtt{i})$;
\item let $\mathrm{D}_X$ be the unique (up to isomorphism) coalgebra $1$-morphism 
in $\underline{\cA}$ constructed in \cite[Lemma~5]{MMMT} such 
that $\mathbf{N}$ is equivalent to $\mathrm{inj}_{\underline{\ccA}}(\mathrm{D}_X)$;
\item set $\Omega(\mathbf{N})$ to be the unique simple transitive quotient of 
$\mathrm{inj}_{\underline{\ccC}}(\mathrm{D}_X)$ which contains $\mathrm{inj}_{\underline{\ccA}}(\mathrm{D}_X)$.
\end{itemize}

\begin{theorem}\label{thmmain}
The maps $\Theta$ and $\Omega$ constructed above provide mutually inverse bijections
$$\Theta\colon\left\{\begin{array}{cc}\mbox{equivalence classes of simple}\\ \mbox{ transitive
$2$-representations }\\ \mbox{ of $\cC$ with apex $\mathcal{J}$} 
\end{array}\right\} \leftrightarrow 
\left\{\begin{array}{cc}
\mbox{equivalence classes of simple } \\ \mbox{ transitive
$2$-representations } \\ \mbox{of $\cA$ with apex $\mathcal{H}$}
\end{array}\right\}\colon \Omega.$$
\end{theorem}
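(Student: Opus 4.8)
The plan is to recast both $\Theta$ and $\Omega$ in terms of coalgebra $1$-morphisms and to exploit the ``idempotent-like'' behaviour of $\mathrm{F}$ isolated in the proof of Proposition~\ref{prop1n}: for every $1$-morphism $\mathrm{H}$ of $\cC$ one has $\mathrm{F}\,\mathrm{H}\,\mathrm{F}\in\mathrm{add}(\mathrm{F})$, and moreover $\mathrm{F}^{\star}=\mathrm{F}$. By \cite[Theorem~9]{MMMT} together with Corollary~\ref{simple}, a simple transitive $2$-representation $\mathbf{M}$ of $\cC$ with apex $\mathcal{J}$ is equivalent to $\mathrm{inj}_{\underline{\ccC}}(\mathrm{C}_M)$ for the cosimple coendomorphism coalgebra $\mathrm{C}_M\in\underline{\cC}$ of a generator $M$; since $\mathcal{J}$ is the apex, $\mathrm{F}$ does not annihilate $\mathbf{M}$, so we may pass to the generator $X:=\mathrm{F}\,M\neq 0$. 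The computation I would carry out first is the identification of the coendomorphism coalgebra $\mathrm{C}_X$ of $X$ with $\mathrm{F}\,\mathrm{C}_M\,\mathrm{F}$: for every $\mathrm{G}\in\underline{\cC}$,
\[
\mathrm{Hom}(X,\mathrm{G}\,X)\cong\mathrm{Hom}(M,\mathrm{F}\,\mathrm{G}\,\mathrm{F}\,M)\cong\mathrm{Hom}(\mathrm{C}_M,\mathrm{F}\,\mathrm{G}\,\mathrm{F})\cong\mathrm{Hom}(\mathrm{F}\,\mathrm{C}_M\,\mathrm{F},\mathrm{G}),
\]
where the first and third isomorphisms use the (bi)adjunction $\mathrm{F}^{\star}=\mathrm{F}$ afforded by the fiat structure. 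By the Yoneda lemma this identifies $\mathrm{C}_X\cong\mathrm{F}\,\mathrm{C}_M\,\mathrm{F}$, and one checks that the coalgebra structures agree since both sides are the internal coendomorphism coalgebra of $X$.

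The decisive observation is then that $\mathrm{F}\,\mathrm{C}_M\,\mathrm{F}$ lies in $\underline{\cA}$. Indeed, writing $\mathrm{C}_M$ as a kernel of a $2$-morphism between two $1$-morphisms of $\cC$ and using that horizontal composition is left exact, $\mathrm{F}\,\mathrm{C}_M\,\mathrm{F}$ is a kernel of a $2$-morphism between objects of $\mathrm{add}(\mathrm{F})\subseteq\cA$, hence an object of $\underline{\cA}$. As $\underline{\cA}$ is a full subcategory of $\underline{\cC}$, cosimplicity of $\mathrm{C}_X\cong\mathrm{F}\,\mathrm{C}_M\,\mathrm{F}$ in $\underline{\cC}$ forces cosimplicity already in $\underline{\cA}$, and, comparing hom-functors, $\mathrm{F}\,\mathrm{C}_M\,\mathrm{F}$ is the coendomorphism coalgebra of $X$ computed inside the $\cA$-subrepresentation $\mathrm{add}(\mathrm{F}\,\mathbf{M})$ of Proposition~\ref{prop1n}. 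Consequently this subrepresentation is already simple transitive and
\[
\Theta(\mathbf{M})\simeq\mathrm{inj}_{\underline{\ccA}}(\mathrm{F}\,\mathrm{C}_M\,\mathrm{F}).
\]
Since $X=\mathrm{F}\,M$ also generates $\mathbf{M}$ over $\cC$, \cite[Theorem~9]{MMMT} applied to $X$ gives $\mathbf{M}\simeq\mathrm{inj}_{\underline{\ccC}}(\mathrm{F}\,\mathrm{C}_M\,\mathrm{F})$; as this $2$-representation is simple transitive, Corollary~\ref{simple} shows that its simple transitive quotient is itself, so $\Omega(\Theta(\mathbf{M}))\simeq\mathbf{M}$. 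This proves $\Omega\circ\Theta=\mathrm{id}$.

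For the reverse composite I would start from $\mathbf{N}\simeq\mathrm{inj}_{\underline{\ccA}}(\mathrm{D})$ with $\mathrm{D}\in\underline{\cA}$ cosimple, set $\mathbf{P}:=\Omega(\mathbf{N})$ and let $\mathrm{E}\in\underline{\cC}$ be the cosimple coalgebra of $\mathbf{P}$. By the construction of $\Omega$ through Proposition~\ref{prop1}, the full embedding $\mathrm{inj}_{\underline{\ccA}}(\mathrm{D})\hookrightarrow\mathrm{inj}_{\underline{\ccC}}(\mathrm{D})$ realises $\mathbf{N}$ inside $\mathbf{P}$ with all objects non-zero; because the apex of $\mathbf{N}$ is $\mathcal{H}$, every object of $\mathbf{N}$ already lies in $\mathrm{add}(\mathrm{F}\,\mathbf{N})$, so this copy of $\mathbf{N}$ is contained in the $\mathrm{F}$-generated part $\mathrm{add}(\mathrm{F}\,\mathbf{P})$ of $\mathbf{P}$. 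Applying the previous paragraph to $\mathbf{P}$ gives $\Theta(\mathbf{P})\simeq\mathrm{inj}_{\underline{\ccA}}(\mathrm{F}\,\mathrm{E}\,\mathrm{F})$, and matching this against the embedded $\mathbf{N}=\mathrm{inj}_{\underline{\ccA}}(\mathrm{D})$ forces $\mathrm{F}\,\mathrm{E}\,\mathrm{F}\cong\mathrm{D}$, whence $\Theta(\Omega(\mathbf{N}))\simeq\mathbf{N}$. Together with $\Omega\circ\Theta=\mathrm{id}$ this yields the asserted mutually inverse bijections.

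I expect the main obstacle to be the computation of the first paragraph, namely carrying the full coalgebra structure (not merely the underlying $1$-morphism) through the biadjunction to obtain $\mathrm{C}_X\cong\mathrm{F}\,\mathrm{C}_M\,\mathrm{F}$ in $\underline{\cA}$, and, in the last paragraph, verifying that the embedded copy of $\mathbf{N}$ is \emph{exactly} the $\mathrm{F}$-generated part of $\mathbf{P}$ rather than only contained in it; the latter relies on the apex hypotheses and on the fullness on injective comodule $1$-morphisms established in Corollary~\ref{cormarco5}.
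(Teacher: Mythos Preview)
Your approach is correct and genuinely different from the paper's. The decisive new idea is the direct computation $\mathrm{C}_X\cong\mathrm{F}\,\mathrm{C}_M\,\mathrm{F}$ via the self-biadjunction $\mathrm{F}^{\star}=\mathrm{F}$, together with the observation that $\mathrm{F}\,\mathrm{H}\,\mathrm{F}\in\mathrm{add}(\mathrm{F})$ for every $\mathrm{H}\in\cC$ forces $\mathrm{C}_X\in\underline{\cA}$. This immediately gives cosimplicity of $\mathrm{C}_X$ in $\underline{\cA}$ and identifies $\Theta(\mathbf{M})\simeq\mathrm{inj}_{\underline{\ccA}}(\mathrm{C}_X)$ and $\mathbf{M}\simeq\mathrm{inj}_{\underline{\ccC}}(\mathrm{C}_X)$, whence $\Omega\circ\Theta=\mathrm{id}$. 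The paper proceeds differently: it passes to the $\mathcal{J}$-simple quotient of $\cC$, introduces the adjunction $(\tau,\iota)$ between $\underline{\cA}$ and $\underline{\cC}$, proves the separate technical Lemma~\ref{lemnewnew01} to show $\tau$ is faithful and hence $\eta_{\mathrm{C}_X}$ is monic, and then uses the comodule adjunction $(\Psi,\Phi)$ of Subsection~\ref{snew1.45} together with a hom-space computation over all $\mathrm{G},\mathrm{H}\in\mathcal{L}$ to conclude that $\mathrm{inj}_{\underline{\ccC}}(\iota\tau(\mathrm{C}_X))\simeq\mathbf{M}$. Your route bypasses Lemma~\ref{lemnewnew01} and the $\mathcal{J}$-simplicity reduction entirely, which is a real simplification; what the paper's machinery buys in exchange is that it never needs to identify $\mathrm{C}_X$ explicitly, only its reflection $\tau(\mathrm{C}_X)$.

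Two remarks on execution. First, your $X=\mathrm{F}\,M$ need not be multiplicity-free, so it does not literally match the $X$ in the definition of $\Omega$; you should note that $\mathrm{add}(X)$ equals the multiplicity-free generator, so the two coalgebras are Morita equivalent in $\underline{\cA}$ and this equivalence persists in $\underline{\cC}$ (or simply replace $X$ by its multiplicity-free version from the start). Second, your treatment of $\Theta\circ\Omega$ is harder than necessary. The paper dispatches it in one line: by construction $\mathbf{N}$ sits as a full $\cA$-stable subcategory of $\Omega(\mathbf{N})|_{\cA}$ with non-zero objects (the maximal ideal of $\mathrm{inj}_{\underline{\ccC}}(\mathrm{D})$ restricts to a proper $\cA$-ideal of the simple transitive $\mathbf{N}$, hence to zero), so $\mathbf{N}$ is a simple transitive subquotient with apex $\mathcal{H}$, and Proposition~\ref{prop1n} gives uniqueness. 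Your ``matching'' of $\mathrm{F}\,\mathrm{E}\,\mathrm{F}$ against $\mathrm{D}$ can be replaced by this; alternatively, once you know from your first paragraph that $\Theta(\mathbf{P})=\mathrm{add}(\mathrm{F}\,\mathbf{P})$ needs no quotient, the fullness of $\mathbf{N}\hookrightarrow\mathbf{P}|_{\cA}$ and the equality of object-classes $\mathrm{add}(\mathrm{F}\,\overline{\mathrm{D}})=\mathrm{add}(\mathrm{F}\,\mathbf{P})$ finish it directly.
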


To prove Theorem~\ref{thmmain}, we will need the following lemma.

\begin{lemma}\label{lemnewnew01}
Assume that $\cC$ is $\mathcal{J}$-simple.
Any non-zero $2$-ideal in $\underline{\cC}$ contains 
a non-zero $2$-ideal of $\cC$, in particular, it contains 
the identity $2$-morphisms for all $1$-morphisms in $\mathcal{J}$.
\end{lemma}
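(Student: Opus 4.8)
The plan is to reduce the statement to producing a single nonzero $2$-morphism of $\cC$ lying in the given ideal, after which $\mathcal{J}$-simplicity does the rest. Write $\mathcal{I}$ for the given nonzero $2$-ideal of $\underline{\cC}$, and let $\mathcal{I}_0$ consist of those $2$-morphisms of $\cC$ (that is, $2$-morphisms between $1$-morphisms of $\cC$) which lie in $\mathcal{I}$. Since the horizontal and vertical compositions of $\cC$ are the restrictions of those of $\underline{\cC}$, the set $\mathcal{I}_0$ is automatically a $2$-ideal of $\cC$. If $\mathcal{I}_0\neq 0$, then, $\cC$ being $\mathcal{J}$-simple, the nonzero $2$-ideal $\mathcal{I}_0$ contains $1_{\mathrm{F}}$ for every $\mathrm{F}\in\mathcal{J}$ (this is precisely the defining property of $\mathcal{J}$-simplicity relative to the maximal cell $\mathcal{J}$); this at once produces the nonzero $2$-ideal $\mathcal{I}_0\subseteq\mathcal{I}$ of $\cC$ and all the required identities. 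Everything therefore reduces to exhibiting one nonzero element of $\mathcal{I}_0$.

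First I would fix the target. Pick a nonzero $\varphi\colon\mathrm{X}\to\mathrm{Y}$ in $\mathcal{I}$. Since $\underline{\cC}$ is the injective abelianization, the $1$-morphisms of $\cC$ are injective objects and every object of $\underline{\cC}$ is a kernel of a morphism between such objects; in particular there is a monomorphism $\mathrm{Y}\hookrightarrow\mathrm{Q}$ into a $1$-morphism $\mathrm{Q}$ of $\cC$. Post-composition with this monomorphism preserves $\mathcal{I}$, so we obtain a nonzero $\varphi'\colon\mathrm{X}\to\mathrm{Q}$ in $\mathcal{I}$ whose target is an object of $\cC$. Using that $\cC$ is fiat, I would then push the target all the way down to an identity: the adjunction isomorphism $\mathrm{Hom}_{\underline{\ccC}}(\mathrm{Q}^{\star}\mathrm{X},\mathbbm{1})\cong\mathrm{Hom}_{\underline{\ccC}}(\mathrm{X},\mathrm{Q})$ sends $\varphi'$ to the composite $\varepsilon\circ_{\mathrm{v}}(\mathrm{Q}^{\star}\circ_{\mathrm{h}}\varphi')$, where $\varepsilon\colon\mathrm{Q}^{\star}\mathrm{Q}\to\mathbbm{1}$ is the adjunction counit; as whiskering and vertical composition both preserve $\mathcal{I}$, this yields a nonzero $\tilde{\varphi}\colon\mathrm{Q}^{\star}\mathrm{X}\to\mathbbm{1}$ in $\mathcal{I}$ with target the $1$-morphism $\mathbbm{1}$ of $\cC$.

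The remaining step---bringing the source into $\cC$ as well---is where the real difficulty lies, and where I expect the main obstacle to be. Cogeneration handles targets but is useless for sources: the source $\mathrm{Q}^{\star}\mathrm{X}$ only embeds into an object of $\cC$, and a monomorphism into an injective cannot in general be cancelled inside an ideal. To force the source into $\cC$ one must use that $\mathcal{J}$ is the unique maximal two-sided cell, which by \cite[Corollary~19]{KM} is regular. The plan is to act by a suitable $1$-morphism $\mathrm{F}\in\mathcal{J}$: since every summand of a product $\mathrm{F}\mathrm{H}$ again lies in $\mathcal{J}$, acting by $\mathrm{F}$ replaces the injective copresentation of the source by one built from objects of $\mathrm{add}(\mathcal{J})$, and the biadjunction unit and counit for $\mathrm{F}$ (again preserving $\mathcal{I}$) can then be spliced onto $\tilde{\varphi}$ so as to land, after simplification, in the hom-space between two $1$-morphisms of $\cC$. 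The crux is twofold: first, that acting by $\mathrm{F}$ can be arranged without annihilating $\tilde{\varphi}$---which should follow because a nonzero element of $\mathcal{I}$ cannot be supported entirely on objects of apex strictly below $\mathcal{J}$ without contradicting the $\mathcal{J}$-simplicity of $\cC$---and, second, that the resulting spliced $2$-morphism is genuinely a $2$-morphism of $\cC$ rather than merely of $\underline{\cC}$.

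Once a single nonzero element of $\mathcal{I}_0$ is secured, the argument closes exactly as in the first paragraph: it generates a nonzero $2$-ideal of $\cC$, and $\mathcal{J}$-simplicity upgrades this to containment of $1_{\mathrm{F}}$ for all $\mathrm{F}\in\mathcal{J}$. Thus the entire weight of the lemma rests on the source-reduction step, whose resolution hinges on the maximality and regularity of $\mathcal{J}$ together with the exactness and biadjointness of the action of $\cC$ on $\underline{\cC}$.
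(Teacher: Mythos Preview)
Your reduction to exhibiting a single nonzero $2$-morphism of $\cC$ in $\mathcal{I}$ is sound, and the target-reduction step via injective cogeneration and adjunction is fine. The gap is exactly where you flag it: the source-reduction step is only a sketch, and the sketch does not go through. Whiskering by some $\mathrm{F}\in\mathcal{J}$ sends $\tilde{\varphi}\colon\mathrm{Q}^{\star}\mathrm{X}\to\mathbbm{1}$ to a map $\mathrm{F}\mathrm{Q}^{\star}\mathrm{X}\to\mathrm{F}$, but $\mathrm{F}\mathrm{Q}^{\star}\mathrm{X}$ is still just the kernel of a map between objects of $\mathrm{add}(\mathcal{J})$, and there is no a priori reason this kernel lies in $\cC$. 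Splicing adjunction units and counits does not help: those operations move you between hom-spaces of the form $\mathrm{Hom}(\mathrm{F}^{\star}\mathrm{Z},-)$ and $\mathrm{Hom}(\mathrm{Z},\mathrm{F}\,-)$, but the offending object $\mathrm{Z}\in\underline{\cC}$ stays on the source side throughout. You also do not justify why acting by $\mathrm{F}$ cannot kill $\tilde{\varphi}$; the heuristic that ``otherwise the support would lie strictly below $\mathcal{J}$'' is not a proof, since the source is not an object of $\cC$ and does not have an apex in the relevant sense.

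The paper closes this gap by a genuinely different mechanism. It sandwiches the original $\alpha\colon\mathrm{H}\to\mathrm{K}$ on \emph{both} sides by $\mathrm{G}$, the multiplicity-free sum of all indecomposables in $\mathcal{J}$, and then passes to a faithful cell $2$-representation $\mathbf{C}_{\mathcal{L}}$ (faithfulness uses $\mathcal{J}$-simplicity). The crucial external input is \cite[Theorem~2]{KMMZ}: in $\mathbf{C}_{\mathcal{L}}$ the $1$-morphism $\mathrm{G}$ is represented by a projective functor, i.e.\ by tensoring with a projective bimodule. Tensoring any bimodule on both sides by a projective bimodule yields a direct sum of copies of that projective bimodule; hence $\mathrm{G}\mathrm{H}\mathrm{G}$ and $\mathrm{G}\mathrm{K}\mathrm{G}$ are, up to isomorphism, objects of $\cC$ (indeed of $\mathrm{add}(\mathcal{J})$), and $1_{\mathrm{G}}\alpha1_{\mathrm{G}}$ becomes a linear map between multiplicity spaces. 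Choosing bases produces a direct-sum decomposition in which $\beta=1_{\mathrm{G}}\alpha1_{\mathrm{G}}$ restricts to an isomorphism between nonzero summands in $\cC$, and the identity of such a summand is the required nonzero element of $\mathcal{I}_0$. In short, the paper does not reduce source and target separately; it uses that sandwiching by $\mathrm{G}$ forces the \emph{whole} object into $\cC$, a phenomenon visible only through the projective-bimodule model of the cell $2$-representation. Your abstract adjunction approach has no access to this, and without a substitute the argument does not close.
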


\begin{proof}
Let $\mathrm{H}$ and $\mathrm{K}$ be two $1$-morphisms in $\underline{\cC}$
and $\alpha:\mathrm{H}\to\mathrm{K}$ a non-zero $2$-morphism contained in our non-zero $2$-ideal
in $\underline{\cC}$. Let $\mathrm{G}$ be a multiplicity-free sum of representatives of
isomorphism classes of all indecomposable $1$-morphisms in $\mathcal{J}$. Consider the $2$-morphism 
\begin{displaymath}
\beta:=1_{\mathrm{G}}\alpha1_{\mathrm{G}}: \mathrm{G}\mathrm{H}\mathrm{G}\to\mathrm{G}\mathrm{K}\mathrm{G},
\end{displaymath}
which is then also in the $2$-ideal.

We claim that $\beta$ is non-zero and that it induces an isomorphism between
some summands of $\mathrm{G}\mathrm{H}\mathrm{G}$ and $\mathrm{G}\mathrm{K}\mathrm{G}$, moreover,
these summands belong both to $\cC$ and to $\mathcal{J}$, up to isomorphism.

Let $\mathcal{L}$ be a left cell in $\mathcal{J}$. Consider the corresponding cell
$2$-representation $\mathbf{C}_{\mathcal{L}}$ of $\cC$. As $\cC$ is $\mathcal{J}$-simple,
this $2$-representation is faithful and its abelianization 
$\underline{\mathbf{C}_{\mathcal{L}}}$ is a faithful $2$-rep\-re\-sen\-tation
of $\underline{\cC}$ due to faithfulness of representability of functors by bimodules. 
By \cite[Theorem~2]{KMMZ}, in this $2$-representation,
the $1$-morphism $\mathrm{G}$ 
is represented by a projective functor. As $\mathbf{C}_{\mathcal{L}}$ is transitive
and $\cC$ is fiat, this projective functor is an additive generator
of the category of all projective endofunctors of $\coprod_{\mathtt{i}\in \ccC}\mathbf{C}_{\mathcal{L}}(\mathtt{i})$. 

Our first claim is that, in the extension of $\underline{\mathbf{C}_{\mathcal{L}}}$ to $\underline{\cC}$, both $\mathrm{G}\mathrm{H}\mathrm{G}$ and $\mathrm{G}\mathrm{K}\mathrm{G}$ act by
non-zero projective functors. We prove the claim for $\mathrm{G}\mathrm{H}\mathrm{G}$;
for $\mathrm{G}\mathrm{K}\mathrm{G}$ the arguments are similar.
Assume $\mathrm{H}$ is given by a diagram 
$\mathrm{H}_1\overset{\gamma}{\longrightarrow}\mathrm{H}_2$,
where $\mathrm{H}_1$ and $\mathrm{H}_2$ are now $1$-morphisms in $\cC$.
Then $\mathrm{G}\mathrm{H}\mathrm{G}$ is given by the diagram 
$\mathrm{G}\mathrm{H}_1\mathrm{G}\overset{1_{\mathrm{G}}\gamma1_{\mathrm{G}}}{\longrightarrow}
\mathrm{G}\mathrm{H}_2\mathrm{G}$. Here any indecomposable summand of both
$\mathrm{G}\mathrm{H}_1\mathrm{G}$ and $\mathrm{G}\mathrm{H}_2\mathrm{G}$
is in $\mathcal{J}$.

We claim that there are decompositions
\begin{equation}\label{eqdecomp}
\mathrm{G}\mathrm{H}_1\mathrm{G}\cong  \mathrm{X}_1\oplus \mathrm{X}_2\quad\text{ and }\quad
\mathrm{G}\mathrm{H}_2\mathrm{G} \cong \mathrm{Y}_1\oplus \mathrm{Y}_2
\end{equation}
such that $1_{\mathrm{G}}\gamma1_{\mathrm{G}}$ annihilates $\mathrm{X}_1$
and induces an isomorphism between $\mathrm{X}_2$ and $\mathrm{Y}_1$.
Let $B$ denote the underlying algebra of $\mathbf{C}_{\mathcal{L}}$, that is, $\coprod_{\mathtt{i}\in \ccC}\mathbf{C}_{\mathcal{L}}(\mathtt{i})$ is equivalent to $B$-$\mathrm{proj}$.
Assume, for simplicity, that $\mathrm{G}$ is represented by 
the $B$-$B$-bimodule $Q_1\otimes_{\Bbbk}P_1$, where $Q_1$ is a left projective $B$-module
and $P_1$ is a right projective $B$-module.
Assume, moreover, that $\mathrm{H}_1$ is similarly represented by 
the $B$-$B$-bimodule $Q_2\otimes_{\Bbbk}P_2$ and that $\mathrm{H}_2$ is represented by 
the $B$-$B$-bimodule $Q_3\otimes_{\Bbbk}P_3$. Note that, in general, $1$-morphisms in $\cC$
will be represented by sums of bimodules of such form but, as our arguments extend easily
to this general case, we will save the significant notational and technical
difficulties involved. Then  $1_{\mathrm{G}}\gamma1_{\mathrm{G}}$ corresponds to a homomorphism of
$B$-$B$-bimodules
\begin{displaymath}
Q_1\otimes_{\Bbbk}P_1\otimes_B Q_2\otimes_{\Bbbk}P_2\otimes_B  Q_1\otimes_{\Bbbk}P_1\to
Q_1\otimes_{\Bbbk}P_1\otimes_B Q_3\otimes_{\Bbbk}P_3\otimes_B  Q_1\otimes_{\Bbbk}P_1.
\end{displaymath}
Both functors are isomorphic to direct sums of copies of $Q_1\otimes_{\Bbbk}P_1$
with multiplicity vector spaces given by $P_1\otimes_B Q_2\otimes_{\Bbbk}P_2\otimes_B  Q_1$
and $P_1\otimes_B Q_3\otimes_{\Bbbk}P_3\otimes_B  Q_1$, respectively. The morphism
$1_{\mathrm{G}}\gamma1_{\mathrm{G}}$ defines a linear map between these multiplicity spaces.

Now, as the bimodule representing $\mathrm{G}$ is an additive generator for projective $B$-$B$-bimodules,
it follows that both multiplicity spaces are non-zero and the morphism defined by
$1_{\mathrm{G}}\gamma1_{\mathrm{G}}$ is non-zero. By choosing appropriate bases in these
multiplicity spaces, we obtain the decomposition \eqref{eqdecomp}.
The definition of injective abelianization implies that 
$\mathrm{G}\mathrm{H}_1\mathrm{G}\overset{1_{\mathrm{G}}\gamma1_{\mathrm{G}}}{\longrightarrow}
\mathrm{G}\mathrm{H}_2\mathrm{G}$ is isomorphic to $\mathrm{X}_1$.
The latter is in $\cC$ and each of its indecomposable summands is in $\mathcal{J}$.

Now we claim that  there are decompositions 
$\mathrm{G}\mathrm{H}\mathrm{G}\cong \mathrm{X}_1\oplus \mathrm{X}_2$ and 
$\mathrm{G}\mathrm{K}\mathrm{G}\cong \mathrm{Y}_1\oplus \mathrm{Y}_2$ such that
$\beta$ annihilates $\mathrm{X}_1$ and induces an isomorphism between $\mathrm{X}_2$ and
$\mathrm{Y}_1$. This is proved by the same arguments as above, which also show that, since the bimodule representing $\mathrm{G}$ is an additive generator for projective 
$B$-$B$-bimodules, $\beta$ is non-zero. This implies that $\mathrm{X}_2$ is non-zero
and the identity $1$-morphism on $\mathrm{X}_2$ belongs to our $2$-ideal.

Thus, any non-zero $2$-ideal in $\underline{\cC}$ contains a non-zero $2$-ideal of $\cC$.
The rest of the claim follows from $\mathcal{J}$-simplicity of $\cC$.
\end{proof}

\begin{proof}[Proof of Theorem~\ref{thmmain}]
It follows immediately from the definitions that $\Theta\circ\Omega$ is the identity.

To prove that $\Omega\circ\Theta$ is the identity, 
let $\mathbf{M}$ be a simple transitive $2$-representation of $\cC$ 
with apex  $\mathcal{J}$. Without loss of generality we may assume that $\cC$ is $\mathcal{J}$-simple.
As before, we let $\mathrm{F}$ be a multiplicity free sum of all 
indecomposable $1$-morphisms in $\mathcal{H}$. Let $X\in \mathbf{M}(\mathtt{i})$ 
be a multiplicity free direct sum of representatives 
of isomorphism classes of all indecomposable objects which 
appear in $\mathrm{F}\, Y$, where $Y\in\mathbf{M}(\mathtt{i})$ is an additive generator
(note that $X\neq 0$  as $0\neq \mathbf{M}(\mathrm{F})$ and
$\mathrm{add}(\mathbf{M}(\mathrm{F})\mathbf{M}(\mathrm{F}))=\mathrm{add}(\mathbf{M}(\mathrm{F}))$). 
Note also that, due to Proposition~\ref{prop1n}, our choice of $X$ here fits with the choice 
of $X$ which was used for the definition of $\Omega$ above.
By  \cite[Theorem~9]{MMMT},  there exists a 
coalgebra $1$-morphism $\mathrm{C}_X$ in $\underline{\cC}$ such that $\mathbf{M}$ is equivalent to 
$\mathrm{inj}_{\underline{\ccC}}(\mathrm{C}_X)$ and such that, for all $\mathrm{H}$ in $\cC$, we have
\begin{displaymath}
\mathrm{Hom}_{\underline{\mathbf{M}}}(X,\mathrm{H}\,X) \cong \mathrm{Hom}_{\underline{\ccC}}(\mathrm{C}_X,\mathrm{H}). 
\end{displaymath} 
The coalgebra object $\mathrm{C}_X$ is cosimple by Corollary~\ref{simple}.

Consider the inclusion $2$-functor $\iota\colon \cA \hookrightarrow \cC$. 
This extends  to a $2$-functor $\underline{\cA} \hookrightarrow \underline{\cC}$, 
whose component functors are left exact and
fully faithful, and send injective objects to injective objects.  
We denote by $\tau$ its (component-wise) left adjoint (cf. \cite{Au,KhM}), 
implying we have adjunction morphisms 
$\varepsilon\colon\tau\iota \to \mathrm{Id}_{\underline{\ccA}}$ and 
$\eta\colon\mathrm{Id}_{\underline{\ccC}} \to \iota\tau$, the first being an isomorphism.
Using these, for any $\mathrm{G},\mathrm{H}\in \underline{\cC}$, we have a morphism
$$\beta_{\mathrm{G}\mathrm{H}}\colon\tau(\mathrm{G}\mathrm{H}) \to \tau(\iota\tau(\mathrm{G})\iota\tau(\mathrm{H})) = \tau \iota(\tau(\mathrm{G})\tau(\mathrm{H})) \to \tau(\mathrm{G})\tau(\mathrm{H})$$
given by the composition of $\tau(\eta_\mathrm{G}\circ_{\mathrm{h}}\eta_\mathrm{H})$ with $\varepsilon_{\tau(\mathrm{G})\tau(\mathrm{H})}$, using, in the middle, that $\iota$ is a $2$-functor,
see \cite{Ke} for details. Consequently, $\tau$ is an oplax $2$-functor. In particular, 
$\tau(\mathrm{C}_X)$ has the canonical structure of a coalgebra $1$-morphism (cf. \cite{Ke}). 
The counit of $\tau(\mathrm{C}_X)$ is the image of the counit of $\mathrm{C}_X$ under $\tau$
(note that $\tau$, as a quotient functor, preserves the identity $1$-morphism).
The comultiplication for $\tau(\mathrm{C}_X)$ is the image of the comultiplication of
$\mathrm{C}_X$ under $\tau$ composed with $\beta_{\mathrm{C}_X\mathrm{C}_X}$.

As $\iota$ is a $2$-functor, by a similar reasoning $\iota\tau(\mathrm{C}_X)$ has the canonical 
structure of a coalgebra $1$-morphism. Furthermore, from the definitions it follows that 
the evaluation of $\eta$ at $\mathrm{C}_X$ is a homomorphism of coalgebra $1$-morphisms. 
This homomorphism
induces a morphism of $2$-representations $\Psi\colon \mathrm{comod}_{\underline{\ccC}}(\mathrm{C}_X)\to \mathrm{comod}_{\underline{\ccC}}(\iota\tau(\mathrm{C}_X))$. 
Note that $\Psi$ is exact and faithful
by construction. 

Now we claim that $\tau$ is faithful. Indeed, if $\tau$ were not faithful, then, by Lemma~\ref{lemnewnew01},
$\tau$ would annihilate all identity $2$-morphisms for all $1$-morphisms in $\mathcal{J}$.
This contradicts the fact that $\tau$ is adjoint to $\iota$ and that 
$\iota$ is faithful.

The fact that $\tau$ is faithful implies that $\eta_{\mathrm{C}_X}$ is monic.
Indeed the adjunction axioms (evaluated at $\mathrm{C}_X$) yield that 
$\tau(\eta_{\mathrm{C}_X})$ is monic, meaning $\tau$ kills the kernel of $\eta_{\mathrm{C}_X}$.
As $\tau$ is faithful, the kernel of $\eta_{\mathrm{C}_X}$ must be zero.
Hence, by
Subsection~\ref{snew1.45}, we also have the adjoint functor 
$\Phi$ which maps $\mathrm{inj}_{\underline{\ccC}}(\iota\tau(\mathrm{C}_X))$
to $\mathrm{inj}_{\underline{\ccC}}(\mathrm{C}_X)$ which is full on injective objects.

Now, for $\mathrm{G}, \mathrm{H}\in \mathcal{L}$,  we compute
\begin{equation}\label{eqn127}
\begin{split}
\mathrm{Hom}_{\mathrm{inj}_{\underline{\ccC}}(\iota\tau(\mathrm{C}_X))}
(\mathrm{G}\,\iota\tau(\mathrm{C}_X),\mathrm{H}\,\iota\tau(\mathrm{C}_X))&\cong 
\mathrm{Hom}_{\mathrm{inj}_{\underline{\ccC}}(\iota\tau(\mathrm{C}_X))}
(\iota\tau(\mathrm{C}_X),\mathrm{G}^{\star}\mathrm{H}\,\iota\tau(\mathrm{C}_X)) \\
&\cong 
\mathrm{Hom}_{\mathrm{inj}_{\underline{\ccC}}(\iota\tau(\mathrm{C}_X))}
(\iota\tau(\mathrm{C}_X),\iota\mathrm{G}^{\star}\mathrm{H}\,\tau(\mathrm{C}_X)) \\
&\cong 
\mathrm{Hom}_{\mathrm{inj}_{\underline{\ccA}}(\tau(\mathrm{C}_X))}
(\tau(\mathrm{C}_X),\mathrm{G}^{\star}\mathrm{H}\,\tau(\mathrm{C}_X)) \\
&\cong 
\mathrm{Hom}_{\underline{\ccA}}
(\tau(\mathrm{C}_X),\mathrm{G}^{\star}\mathrm{H}). \\
\end{split}
\end{equation}
Here in the first line we use involutivity of $\star$; in the second line we use 
that $\mathrm{G}^*\mathrm{H}\in \mathcal{H}$, for any $\mathrm{G}, \mathrm{H}\in \mathcal{L}$,
and hence $\mathrm{G}^*\mathrm{H}$ commutes with $\iota$; in the third line we use that $\iota$ is full and faithful;
and in the fourth line we use \cite[Lemma~7]{MMMT}. 

Similarly, we also have 
\begin{equation*}
\begin{split}
\mathrm{Hom}_{\mathrm{inj}_{\underline{\ccC}}(\mathrm{C}_X)}
(\mathrm{G}\,\mathrm{C}_X,\mathrm{H}\,\mathrm{C}_X)&\cong 
\mathrm{Hom}_{\mathrm{inj}_{\underline{\ccC}}(\mathrm{C}_X)}
(\mathrm{C}_X,\mathrm{G}^{\star}\mathrm{H}\,\mathrm{C}_X) \\
&\cong 
\mathrm{Hom}_{\underline{\ccC}}
(\mathrm{C}_X,\mathrm{G}^{\star}\mathrm{H}). \\
\end{split}
\end{equation*}

Now we note that, using adjunction and the fact that $\mathrm{G}^{\star}\mathrm{H}\in\cA$, we have
\begin{displaymath}
 \mathrm{Hom}_{\underline{\ccC}}
(\mathrm{C}_X,\mathrm{G}^{\star}\mathrm{H})\cong
 \mathrm{Hom}_{\underline{\ccC}}
(\mathrm{C}_X,\iota(\mathrm{G}^{\star}\mathrm{H}))\cong
\mathrm{Hom}_{\underline{\ccA}}
(\tau(\mathrm{C}_X),\mathrm{G}^{\star}\mathrm{H}).
\end{displaymath}

Combining this, we obtain
$$\mathrm{Hom}_{\mathrm{inj}_{\underline{\ccC}}(\mathrm{C}_X)}
(\mathrm{G}\,\mathrm{C}_X,\mathrm{H}\,\mathrm{C}_X) \cong \mathrm{Hom}_{\mathrm{inj}_{\underline{\ccC}}(\iota\tau(\mathrm{C}_X))}
(\mathrm{G}\,\iota\tau(\mathrm{C}_X),\mathrm{H}\,\iota\tau(\mathrm{C}_X)),$$ for each $\mathrm{G}, \mathrm{H}\in \mathcal{L}$.
Together with the fact that $\displaystyle \mathrm{add}(\{\mathrm{G}\, X\,:\,\mathrm{G}\in\mathcal{L}\})=
\coprod_{\mathtt{i}\in\ccC}\mathbf{M}(\mathtt{i})$, this
implies that $\Phi$ is, in fact, an equivalence and $\mathrm{inj}_{\underline{\ccC}}(\iota\tau(\mathrm{C}_X))\cong \mathbf{M}$ is simple transitive. 

Consequently, the coalgebra $1$-morphism $\iota\tau(\mathrm{C}_X)$ is cosimple by Corollary \ref{simple}. 
As $\iota$ is left exact, this implies that
the coalgebra $1$-morphism 
$\tau(\mathrm{C}_X)$ in $\underline{\cA}$ is cosimple. 
Thus, again by Corollary~\ref{simple}, the $2$-representation
$\mathrm{inj}_{\underline{\ccA}}(\tau(\mathrm{C}_X))$ is simple transitive.

On the other hand, by construction, $\mathrm{inj}_{\underline{\ccA}}(\tau(\mathrm{C}_X))$, which is given by the additive closure of $\mathrm{F}\, \tau(\mathrm{C}_X)$, is equivalent to the transitive $2$-subrepresentation $\mathbf{N}$ defined in the proof of Proposition \ref{prop1n} of the restriction of $\mathbf{M} \cong \mathrm{inj}_{\underline{\ccC}}(\iota\tau(\mathrm{C}_X))$ to $\cA$. As $\mathrm{inj}_{\underline{\ccA}}(\tau(\mathrm{C}_X))$ is simple transitive, this proves
\begin{displaymath}
\mathrm{inj}_{\underline{\ccA}}(\tau(\mathrm{C}_X))\cong\Theta(\mathrm{inj}_{\underline{\ccC}}(\iota\tau(\mathrm{C}_X))) \cong \Theta(\mathbf{M}).
\end{displaymath}
Now by definition, $\Omega(\mathrm{inj}_{\underline{\ccA}}(\tau(\mathrm{C}_X)))$ is the simple transitive quotient of $\mathrm{inj}_{\underline{\ccC}}(\iota\tau(\mathrm{C}_X))$, which is already simple transitive. Thus $$\Omega(\Theta(\mathbf{M}))\cong \Omega(\mathrm{inj}_{\underline{\ccA}}(\tau(\mathrm{C}_X))\cong \mathrm{inj}_{\underline{\ccC}}(\iota\tau(\mathrm{C}_X))\cong \mathbf{M}.$$
This, finally, yields that
$\Omega\circ\Theta$ is the identity and the proof is complete.
\end{proof}

\begin{corollary}\label{cor753}
The following statements hold.

\begin{enumerate}[$($i$)$]
\item \label{cor753.1}
In the construction of 
$\Omega(\mathbf{N})$, already $\mathrm{inj}_{\underline{\ccC}}(\mathrm{D}_X)$ is simple transitive.
\item \label{cor753.2}
In the construction of  $\Theta(\mathbf{M})$, we do not need to take any quotients,
that is, $\Theta(\mathbf{M})$ is a $2$-subrepresentation.
\end{enumerate}
\end{corollary}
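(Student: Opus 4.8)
The plan is to read both statements off from the internal structure of the proof of Theorem~\ref{thmmain} rather than to set up a fresh argument: that proof already produces, on each side of the bijection, an explicit \emph{cosimple} coalgebra $1$-morphism and identifies the relevant $2$-representation with an honest $\mathrm{inj}$-category, so in each case Corollary~\ref{simple} shows directly that there is nothing left to quotient.

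For the second claim~\eqref{cor753.2}, I would start from a simple transitive $\mathbf{M}$ of $\cC$ with apex $\mathcal{J}$ and recall that the proof of Theorem~\ref{thmmain} constructs the coalgebra $1$-morphism $\tau(\mathrm{C}_X)$ in $\underline{\cA}$ and shows it is cosimple (deduced from cosimplicity of $\iota\tau(\mathrm{C}_X)$ together with left exactness of $\iota$). By Corollary~\ref{simple} applied inside the fiat $2$-category $\cA$, this forces $\mathrm{inj}_{\underline{\ccA}}(\tau(\mathrm{C}_X))$ to be simple transitive. The remaining step is the identification, already carried out in that proof, of $\mathrm{inj}_{\underline{\ccA}}(\tau(\mathrm{C}_X))$ with the $2$-subrepresentation $\mathbf{N}$ of Proposition~\ref{prop1n}: the former is the additive closure of $\mathrm{F}\,\tau(\mathrm{C}_X)$, which is exactly the additive closure defining $\mathbf{N}$. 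Since $\mathbf{N}$ is thereby seen to be simple transitive, its simple transitive quotient is $\mathbf{N}$ itself, so no quotient is needed and $\Theta(\mathbf{M})=\mathbf{N}$ is genuinely a $2$-subrepresentation.

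For the first claim~\eqref{cor753.1}, I would run the parallel identification on the $\cC$-side. Given a simple transitive $\mathbf{N}$ of $\cA$ with apex $\mathcal{H}$, Corollary~\ref{simple} (inside $\cA$) shows that the coalgebra $\mathrm{D}_X$ used in the definition of $\Omega$ is cosimple in $\underline{\cA}$. Invoking Theorem~\ref{thmmain}, write $\mathbf{N}=\Theta(\mathbf{M})$ for the unique simple transitive $\mathbf{M}$ with apex $\mathcal{J}$; then $\mathrm{D}_X$ is isomorphic to the coalgebra $\tau(\mathrm{C}_X)$ produced in the theorem's proof, both being the coalgebra $1$-morphism representing $\mathbf{N}$. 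That proof shows $\iota\tau(\mathrm{C}_X)$, i.e.\ $\iota(\mathrm{D}_X)$, is cosimple in $\underline{\cC}$, so by Corollary~\ref{simple} applied inside $\cC$ the $2$-representation $\mathrm{inj}_{\underline{\ccC}}(\mathrm{D}_X)=\mathrm{inj}_{\underline{\ccC}}(\iota(\mathrm{D}_X))$ is already simple transitive. Hence the simple transitive quotient taken in the definition of $\Omega(\mathbf{N})$ is the identity, which is the assertion.

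The only genuinely delicate point I expect is the bookkeeping of choices: the coalgebra $1$-morphism depends on the chosen multiplicity-free generator $X$, so I would note that any two admissible choices give Morita-equivalent coalgebras with equivalent (injective) comodule categories, which lets me take the $\mathrm{D}_X$ of the definition of $\Omega$ to coincide, up to isomorphism, with the $\tau(\mathrm{C}_X)$ of the theorem's proof, and likewise match the $X$ of Proposition~\ref{prop1n} with the one built in that proof. Once this compatibility is recorded, both statements are immediate consequences of cosimplicity via Corollary~\ref{simple}, and no further computation is required.
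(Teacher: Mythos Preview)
Your proposal is correct and follows essentially the same approach as the paper: both claims are read off directly from the proof of Theorem~\ref{thmmain}, with \eqref{cor753.2} coming from the identification of $\mathrm{inj}_{\underline{\ccA}}(\tau(\mathrm{C}_X))$ with the $2$-subrepresentation $\mathbf{N}$ of Proposition~\ref{prop1n} together with its simple transitivity, and \eqref{cor753.1} from the fact that $\mathrm{inj}_{\underline{\ccC}}(\iota\tau(\mathrm{C}_X))$ is already simple transitive. The only minor difference is that for \eqref{cor753.1} the paper cites directly the equivalence $\mathrm{inj}_{\underline{\ccC}}(\iota\tau(\mathrm{C}_X))\cong\mathbf{M}$, whereas you pass through cosimplicity of $\iota\tau(\mathrm{C}_X)$ and then invoke Corollary~\ref{simple}; since that cosimplicity was itself deduced from the same equivalence, this is the same argument with one extra link, and your remark on the bookkeeping of the choice of $X$ is exactly the compatibility recorded in the theorem's proof.
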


\begin{proof}
Claim~\eqref{cor753.1} follows from the part of the proof of Theorem~\ref{thmmain}
which shows that $\Psi$ is an equivalence between 
$\mathrm{inj}_{\underline{\ccC}}(\iota\tau(\mathrm{C}_X))$ 
and the original simple transitive $2$-representation $\mathbf{M}$.

Claim~\eqref{cor753.2} is proved by noting, as in the last paragraph of the proof of Theorem~\ref{thmmain}, that $\mathrm{inj}_{\underline{\ccA}}(\tau(\mathrm{C}_X))$ is equivalent to the $2$-subrepresentation $\mathbf{N}$  of the restriction of $\mathbf{M}$ to $\cA$ constructed in the proof of Proposition \ref{prop1n}, and is proved to be simple transitive in the second to last paragraph 
of the proof of Theorem~\ref{thmmain}.
\end{proof}

\subsection{Some consequences}\label{s2.8}

\begin{corollary}\label{corn1}
Let $\cC$ be as in Subsection~\ref{s2.7} and 
$\mathcal{L}_i$, where $i=1,2$, be two different left cells in $\mathcal{J}$.
Then, for  $\mathcal{H}_i=\mathcal{L}_i\cap\mathcal{L}_i^\star$, where $i=1,2$, there is
a natural bijection between the set of equivalence classes of simple transitive 
$2$-representations of $\cA_{\mathcal{H}_1}$ with apex $\mathcal{H}_1$ and  
the set of equivalence classes of simple transitive  $2$-representations of $\cA_{\mathcal{H}_2}$
with apex $\mathcal{H}_2$.
\end{corollary}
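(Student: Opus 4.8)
The plan is to reduce the statement entirely to Theorem~\ref{thmmain}, using as a hinge the set of equivalence classes of simple transitive $2$-representations of $\cC$ with apex $\mathcal{J}$. The point is that this set is intrinsic to the pair $(\cC,\mathcal{J})$ and, in particular, does not depend on the choice of a left cell inside $\mathcal{J}$. So I would pass through it twice, once for each of $\mathcal{L}_1$ and $\mathcal{L}_2$.

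First I would observe that Theorem~\ref{thmmain} applies verbatim to each of the two left cells $\mathcal{L}_1$ and $\mathcal{L}_2$ in place of $\mathcal{L}$: as recorded in Subsection~\ref{s2.6}, for each $i$ the $2$-subcategory $\cA_{\mathcal{H}_i}$ is fiat and $\mathcal{H}_i=\mathcal{L}_i\cap\mathcal{L}_i^{\star}$ is a two-sided cell in it, which is exactly the setup under which $\Theta$ and $\Omega$ are defined. Writing $\Theta_i$ and $\Omega_i$ for the resulting mutually inverse bijections, I obtain, for $i=1,2$,
$$\Theta_i\colon\left\{\begin{array}{c}\text{equivalence classes of simple}\\ \text{transitive }2\text{-representations}\\ \text{of }\cC\text{ with apex }\mathcal{J}\end{array}\right\}\longleftrightarrow\left\{\begin{array}{c}\text{equivalence classes of simple}\\ \text{transitive }2\text{-representations}\\ \text{of }\cA_{\mathcal{H}_i}\text{ with apex }\mathcal{H}_i\end{array}\right\}\colon\Omega_i.$$

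The required bijection is then the composite $\Theta_2\circ\Omega_1$, whose inverse is $\Theta_1\circ\Omega_2$; both are bijections, being composites of bijections by Theorem~\ref{thmmain}. Concretely, starting from a simple transitive $2$-representation $\mathbf{N}$ of $\cA_{\mathcal{H}_1}$ with apex $\mathcal{H}_1$, one applies $\Omega_1$ to produce a simple transitive $2$-representation of $\cC$ with apex $\mathcal{J}$, and then applies $\Theta_2$ to land among the simple transitive $2$-representations of $\cA_{\mathcal{H}_2}$ with apex $\mathcal{H}_2$. The naturality asserted in the statement is precisely the fact that the intermediate set is independent of the chosen left cell, so that the construction does not privilege either of $\mathcal{L}_1,\mathcal{L}_2$.

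I do not expect any genuine obstacle here, since all the substantive work has already been carried out in Theorem~\ref{thmmain}; the corollary is a purely formal consequence of having, for each left cell $\mathcal{L}_i$, a bijection onto one and the same cell-independent set. The only thing worth double-checking is the hypothesis bookkeeping, namely that $\cA_{\mathcal{H}_i}$ genuinely satisfies the assumptions of Subsection~\ref{s2.6} for each $i$, which is immediate from the description of $\cA_{\mathcal{H}}$ given there.
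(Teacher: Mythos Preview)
Your argument is correct and is exactly the approach taken in the paper: apply Theorem~\ref{thmmain} once to the pair $(\cC,\cA_{\mathcal{H}_1})$ and once to the pair $(\cC,\cA_{\mathcal{H}_2})$, then compose the resulting bijections through the common set of equivalence classes of simple transitive $2$-representations of $\cC$ with apex $\mathcal{J}$.
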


\begin{proof}
This follows by applying Theorem~\ref{thmmain} first to the pair $\cC$ and $\cA_{\mathcal{H}_1}$
and then to the pair $\cC$ and $\cA_{\mathcal{H}_2}$.
\end{proof}

\begin{corollary}\label{corn2}
Let $\cC$ be a fiat $2$-category and $\mathcal{J}$ a two-sided cell in $\cC$.
Assume that $\mathcal{J}$ contains a left cell $\mathcal{L}$ such that 
$|\mathcal{L}\cap\mathcal{L}^\star|=1$. Then $\mathcal{J}$ is strongly regular.
\end{corollary}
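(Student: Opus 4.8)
The plan is to establish the two defining requirements of strong regularity in turn: that $\mathcal{J}$ is regular, and that every $\mathcal{H}$-cell $\mathcal{L}'\cap\mathcal{R}'$ inside $\mathcal{J}$ (with $\mathcal{L}'$ a left and $\mathcal{R}'$ a right cell of $\mathcal{J}$) is a singleton. For the first point I would reduce to the situation of Subsection~\ref{s2.6}, where $\mathcal{J}$ is the unique maximal two-sided cell, by passing to the associated $\mathcal{J}$-simple subquotient of $\cC$. This standard reduction leaves the internal combinatorics of $\mathcal{J}$ unchanged — the left cells, right cells, the involution $\star$, the Duflo involutions and all the intersections $\mathcal{L}'\cap\mathcal{R}'$ are the same — so in particular the hypothesis $|\mathcal{L}\cap\mathcal{L}^{\star}|=1$ is preserved. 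Once $\mathcal{J}$ is maximal, regularity is immediate from \cite[Corollary~19]{KM}, exactly as in Subsection~\ref{s2.6}. Thus the whole problem reduces to controlling the cardinalities of the $\mathcal{H}$-cells.

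The heart of the argument is then to show that all $\mathcal{H}$-cells inside the regular cell $\mathcal{J}$ are equinumerous. This is the categorified shadow of Green's Lemma, in line with the analogy with finite-semigroup theory emphasised in the introduction: in a $\mathcal{D}$-class all $\mathcal{H}$-classes have equal cardinality. Concretely, to compare two $\mathcal{H}$-cells sharing a common right cell, say $\mathcal{L}_1\cap\mathcal{R}$ and $\mathcal{L}_2\cap\mathcal{R}$, I would fix indecomposable $1$-morphisms $\mathrm{A}\in\mathcal{L}_1\cap\mathcal{R}$ and $\mathrm{B}\in\mathcal{L}_2\cap\mathcal{R}$ and use the fiat structure to realise left multiplication by a suitable indecomposable $\mathrm{P}$ in the connecting $\mathcal{H}$-cell $\mathcal{L}_2\cap\mathcal{L}_1^{\star}$ as a bijection on the sets of indecomposable summands, carrying $\mathcal{L}_1\cap\mathcal{R}$ onto $\mathcal{L}_2\cap\mathcal{R}$, with inverse given by left multiplication by $\mathrm{P}^{\star}$. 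Running the symmetric argument within a fixed left cell shows that $|\mathcal{L}'\cap\mathcal{R}'|$ is independent of $\mathcal{R}'$ for fixed $\mathcal{L}'$ and of $\mathcal{L}'$ for fixed $\mathcal{R}'$; since the grid of $\mathcal{H}$-cells of $\mathcal{J}$ is connected, all of them are equinumerous. (The bijection $\star\colon\mathcal{L}_i\cap\mathcal{R}_j\to\mathcal{L}_j\cap\mathcal{R}_i$ provides a convenient consistency check.) Alternatively, this equinumerosity could be extracted from the decategorified positively based algebra attached to $\mathcal{J}$ via the machinery of \cite{KM} recalled in Subsection~\ref{s2.51}.

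With equinumerosity in hand the conclusion is immediate: the hypothesis exhibits the diagonal $\mathcal{H}$-cell $\mathcal{L}\cap\mathcal{L}^{\star}$ as a singleton, so every $\mathcal{H}$-cell $\mathcal{L}'\cap\mathcal{R}'$ of $\mathcal{J}$ is a singleton, which together with regularity is precisely strong regularity of $\mathcal{J}$. The step I expect to be the main obstacle is the equinumerosity itself: unlike in a semigroup, multiplication of indecomposable $1$-morphisms yields direct sums with multiplicities rather than single elements, so the translation maps are a priori neither bijective nor multiplicity-free. The real work is to use adjunction together with the positivity of the structure constants — and regularity, to prevent summands from escaping the target $\mathcal{H}$-cell — in order to force these translations to be genuine bijections on indecomposables.
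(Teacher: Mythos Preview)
Your strategy differs substantially from the paper's, and the step you yourself flag as the obstacle is genuinely unproven in your proposal.

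The paper does not attempt a direct Green's-lemma-style equinumerosity argument. Instead it uses Theorem~\ref{thmmain} as the essential input. Since $|\mathcal{H}|=1$, the $2$-category $\cA=\cA_{\mathcal{H}}$ has a strongly regular two-sided cell $\mathcal{H}$, so by \cite[Theorem~18]{MM5}/\cite[Theorem~33]{MM6} it has a \emph{unique} simple transitive $2$-representation with apex $\mathcal{H}$. Theorem~\ref{thmmain} then forces $\cC$ to have a unique simple transitive $2$-representation with apex $\mathcal{J}$; in particular all cell $2$-representations $\mathbf{C}_{\mathcal{L}_1}$, $\mathbf{C}_{\mathcal{L}_2}$ are equivalent. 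An equivalence gives a bijection $\mathcal{L}_1\to\mathcal{L}_2$, and \cite[Lemma~12]{MM1} (comparing annihilators of simples) shows this bijection respects right cells. This yields $|\mathcal{L}_1\cap\mathcal{R}|=|\mathcal{L}_2\cap\mathcal{R}|$ for all $\mathcal{R}$, from which strong regularity follows by the two-step argument with $\star$.

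Your proposed direct argument --- left multiplication by some $\mathrm{P}\in\mathcal{L}_2\cap\mathcal{L}_1^{\star}$ giving a bijection of indecomposables between $\mathcal{L}_1\cap\mathcal{R}$ and $\mathcal{L}_2\cap\mathcal{R}$ --- is exactly where the difficulty lies, and you have not carried it out. Regularity only ensures that summands of $\mathrm{P}\mathrm{A}$ stay in $\mathcal{L}_2$; it does not give you control over multiplicities, nor does it tell you that $\mathrm{P}\,{}_-$ and $\mathrm{P}^{\star}\,{}_-$ are mutually inverse on isomorphism classes. The adjunction and positivity you invoke are the right ingredients, but turning them into an honest bijection of indecomposables is precisely the nontrivial content, and your proposal stops short of it. The alternative you mention --- extracting equinumerosity from the decategorified positively based algebra via \cite{KM} --- is also just a pointer, not an argument; the results recalled in Subsection~\ref{s2.51} concern special modules and do not immediately yield equinumerosity of $\mathcal{H}$-cells.

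In short: the paper's route via Theorem~\ref{thmmain} is what makes this corollary a genuine \emph{consequence} of the main result, whereas your route attempts an independent combinatorial proof whose key step remains open in your write-up.
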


\begin{proof}
Due to \cite[Corollary~19]{KM}, we already know  that $\mathcal{J}$ is regular.
Without loss of generality we may assume that $\cC$ is as in Subsection~\ref{s2.6}.
Let $\mathcal{H}=\mathcal{L}\cap\mathcal{L}^\star$ and $\cA=\cA_{\mathcal{H}}$.
Then the two-sided cell $\mathcal{H}$ in $\cA$ is strongly regular and consists of one element. 
Therefore, due to \cite[Theorem~18]{MM5}
(in the version of
\cite[Theorem~33]{MM6}), $\cA$ has a unique, up to equivalence, simple
transitive $2$-representation with apex $\mathcal{H}$.

From Theorem~\ref{thmmain} we thus obtain that $\cC$ has a unique, up to equivalence, simple
transitive $2$-representation with apex $\mathcal{J}$. Consequently, for any two left cells
$\mathcal{L}_1$ and $\mathcal{L}_2$ in $\mathcal{J}$, the corresponding cell 
$2$-representations $\mathbf{C}_{\mathcal{L}_1}$ and $\mathbf{C}_{\mathcal{L}_2}$ of $\cC$
are equivalent. Let $\Psi:\mathbf{C}_{\mathcal{L}_1}\to \mathbf{C}_{\mathcal{L}_2}$
be an equivalence. Then $\Psi$ induces a bijection from $\mathcal{L}_1$ (which indexes isomorphism
classes of indecomposable objects in 
$\displaystyle\coprod_{\mathtt{i}\in\ccC}\mathbf{C}_{\mathcal{L}_1}(\mathtt{i})$) 
to $\mathcal{L}_2$ (which indexes isomorphism classes of indecomposable objects in 
$\displaystyle\coprod_{\mathtt{i}\in\ccC}\mathbf{C}_{\mathcal{L}_2}(\mathtt{i})$).

For $j=1,2$, let $L_{\mathrm{F}_j}$ be the simple object in 
$\displaystyle\coprod_{\mathtt{i}\in\ccC}\overline{\mathbf{C}_{\mathcal{L}_j}}(\mathtt{i})$ 
corresponding to some $\mathrm{F}_j\in \mathcal{L}_j$.
By \cite[Lemma~12]{MM1}, the annihilators of $L_{\mathrm{F}_1}$ and $L_{\mathrm{F}_2}$
in $\cC$ have a chance to coincide only if $\mathrm{F}_1$ and $\mathrm{F}_2$ belong
to the same right cell in $\mathcal{J}$. Therefore the bijection from 
$\mathcal{L}_1$ to $\mathcal{L}_2$ induced by $\Psi$ preserves right cells.
In other words, for any right cell $\mathcal{R}$ in $\mathcal{J}$, we have
\begin{equation}\label{eqeqn7}
|\mathcal{L}_1\cap\mathcal{R}|=|\mathcal{L}_2\cap\mathcal{R}|. 
\end{equation}

Using \eqref{eqeqn7}, from $|\mathcal{H}|=1$, we thus get $|\mathcal{L}'\cap \mathcal{L}^{\star}|=1$,
for any left cell $\mathcal{L}'$ in $\mathcal{J}$. Applying $\star$,  we get
$|\mathcal{R}\cap \mathcal{L}|=1$,
for any right cell $\mathcal{R}$ in $\mathcal{J}$. Now, using  \eqref{eqeqn7} once more,
it follows that $\mathcal{J}$ is strongly regular.
\end{proof}

\section{Application to Soergel bimodules, part I}\label{s3}

\subsection{Soergel bimodules and their small quotients}\label{s3.1}

Let $(W,S)$ be a finite Coxeter system. Further, let $V_{\mathbb{R}}$ be a reflection faithful $W$-module,
$V$ its complexification and $\mathtt{C}$ the corresponding {\em coinvariant algebra}. Let $\cS$ denote
the fiat $2$-category of Soergel $\mathtt{C}$-$\mathtt{C}$-bimodules associated with $(W,S)$ and $V$, 
see \cite{So1,So2,EW}. The unique object in $\cS$ is denoted $\mathtt{i}$.
For $w\in W$, we denote by $\theta_w$ the unique (up to isomorphism) indecomposable
Soergel bimodule which corresponds to $w$. Then the split Grothendieck ring of $\cS$ is isomorphic 
to $\mathbb{Z} [W]$ and this isomorphism sends the class of $\theta_w$ to the corresponding element
$\underline{H}_w$ of the Kazhdan-Lusztig basis in $\mathbb{Z} [W]$, see \cite{KL,EW}. Consequently,
the left, right and two-sided cells in $\cS$ are given by the corresponding left, right and two-sided
Kazhdan-Lusztig cells in $W$.

The $2$-category $\cS$ has a unique minimal (with respect to the two-sided order) 
two-sided cell, namely, the two-sided cell  $\mathcal{J}_e:=\{\theta_e\}$. In the set of the 
remaining two-sided cells there is again a unique minimal (with respect to the two-sided order)
two-sided cell which we call $\mathcal{J}$ for the rest of this section. 
This two-sided cell contains all $\theta_s$, where
$s\in S$. According to \cite[Proposition~4]{KMMZ}, elements of $\mathcal{J}$ can be characterized
in many different ways, for example, as those $\theta_w$, where $w\in W$, for which $w$
has a unique reduced expression.

In the set of all $2$-ideals of $\cS$ which do not contain any $1_{\theta_w}$, 
where $\theta_w\in \mathcal{J}$,
there is a unique ideal that is maximal with respect to inclusions. The corresponding quotient
of $\cS$ is denoted by ${{\cS}_{\hspace{-1mm}\mathrm{sm}}}$ and called the {\em small quotient} of $\cS$. 
We refer the reader to \cite[Section~3]{KMMZ} for details (note the change of notation 
from $\underline{\cS}$ to  ${{\cS}_{\hspace{-1mm}\mathrm{sm}}}$ to avoid
conflict with injective abelianization).

\subsection{Simple transitive $2$-representations of ${{\cS}_{\hspace{-1mm}\mathrm{sm}}}$}\label{s3.2}

The following statement, essentially, summarizes \cite[Theorem~6.1]{Zi1}, 
\cite[Theorem~1]{KMMZ} and \cite[Theorems~I,~II,~III]{MT}.
\vspace{7mm}

\begin{theorem}\label{thm2}
\hspace{2mm}

\begin{enumerate}[$($i$)$]
\item\label{thm2.1} If $(W,S)$ is not of type $I_2(n)$ with $n>4$ even, then every simple transitive 
$2$-representation of ${{\cS}_{\hspace{-1mm}\mathrm{sm}}}$ is equivalent to a cell $2$-representation.
\item\label{thm2.2} If $(W,S)$ is of type $I_2(n)$ with $n>4$ even and $n\neq 12,18,30$, then,
apart from cell $2$-representations, ${{\cS}_{\hspace{-1mm}\mathrm{sm}}}$ has exactly two extra equivalence classes of 
simple transitive  $2$-representations.
\item\label{thm2.3} If $(W,S)$ is of type $I_2(n)$ with $n= 12,18,30$, then,
apart from cell $2$-rep\-re\-sen\-ta\-ti\-ons, ${{\cS}_{\hspace{-1mm}\mathrm{sm}}}$ has at least four extra equivalence 
classes of simple transitive  $2$-rep\-re\-sen\-ta\-ti\-ons. Under the additional assumption of gradability,
these exhaust all simple transitive  $2$-representations.
\end{enumerate}
\end{theorem}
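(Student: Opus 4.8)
The statement is a consolidation of three existing classifications, so the task is to match each case to the appropriate source after a preliminary reduction. The plan is first to observe that ${{\cS}_{\hspace{-1mm}\mathrm{sm}}}$ has precisely two two-sided cells, the trivial cell $\mathcal{J}_e=\{\theta_e\}$ and the cell $\mathcal{J}$ containing all $\theta_s$ with $s\in S$. Consequently, by the theory of apexes recalled in Subsection~\ref{s2.2}, every simple transitive $2$-representation of ${{\cS}_{\hspace{-1mm}\mathrm{sm}}}$ has apex either $\mathcal{J}_e$ or $\mathcal{J}$; those with apex $\mathcal{J}_e$ are the trivial ones, which coincide with cell $2$-representations, so all of the content resides in the classification of those with apex $\mathcal{J}$. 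The remainder of the argument is then a case distinction on the Coxeter type of $(W,S)$.

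For part~\eqref{thm2.1}, I would simply invoke \cite[Theorem~1]{KMMZ}. That result classifies the simple transitive $2$-representations of the small quotient in every finite Coxeter type and shows that, away from the even dihedral types $I_2(n)$ with $n>4$, cell $2$-representations exhaust them. The only point requiring care is bookkeeping: one must confirm that every type excluded from the bad case --- the non-dihedral types, the odd dihedral types, and the boundary case $I_2(4)$ --- is indeed covered by \cite{KMMZ} and yields no non-cell examples (note that $I_2(4)=B_2$ lies in this regime, consistently with the fact that the non-cell behaviour of \cite{MaMa} arises for a different subquotient rather than for the small quotient itself).

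For parts~\eqref{thm2.2} and~\eqref{thm2.3}, the even dihedral case, I would appeal to the finer dihedral classification of \cite{MT} together with the constructions of \cite{Zi1}. The underlying mechanism is the ADE-type classification: the simple transitive $2$-representations of the dihedral small quotient are governed by module categories whose associated Dynkin datum has Coxeter number $n$, with the $A$-type accounting exactly for the cell $2$-representations. For even $n$ the $D$-type datum exists and accounts for the two extra equivalence classes of part~\eqref{thm2.2}; at the exceptional Coxeter numbers $n=12,18,30$, which are precisely the Coxeter numbers of $E_6$, $E_7$ and $E_8$, the corresponding exceptional data appear and contribute further classes, giving the lower bound of four in part~\eqref{thm2.3}. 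Here one must additionally import \cite[Theorem~6.1]{Zi1} for the construction of these exceptional examples and, for the exhaustiveness assertion under gradability, the graded classification of \cite{MT}.

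The main obstacle is not any single new computation but the reconciliation of the three frameworks: one must verify that the notions of simple transitive $2$-representation, of cell $2$-representation, and of equivalence used in \cite{KMMZ}, \cite{Zi1} and \cite{MT} agree on their overlap, and that the counts are not double-counted across sources. The most delicate point is the status of part~\eqref{thm2.3}: the ungraded classification at $n=12,18,30$ is only known to produce \emph{at least} four extra classes, and exhaustiveness can be asserted only under the gradability hypothesis, so the statement must faithfully carry this caveat rather than claim a complete ungraded count.
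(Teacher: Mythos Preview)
Your proposal is correct and matches the paper's approach: the paper itself offers no independent argument and simply presents the theorem as a summary of \cite[Theorem~6.1]{Zi1}, \cite[Theorem~1]{KMMZ} and \cite[Theorems~I,~II,~III]{MT}. One small misattribution: \cite{Zi1} treats type $B_2=I_2(4)$ specifically, so it contributes to part~\eqref{thm2.1} rather than to the construction of the extra classes in parts~\eqref{thm2.2}--\eqref{thm2.3}; both the constructions and the gradable exhaustiveness for even $n>4$ come from \cite{MT} (with \cite{KMMZ} handling the remaining types).
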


\subsection{The $2$-category ${\cQ}$}\label{s3.3}

Consider the $2$-category ${{\cS}_{\hspace{-1mm}\mathrm{sm}}}$ and fix $s\in S$. 
Denote by $\cQ$ the $2$-subcategory $\cA_{\mathcal{H}}$ of ${{\cS}_{\hspace{-1mm}\mathrm{sm}}}$, where 
$\mathcal{H}=\mathcal{L}_{s}\cap \mathcal{L}_{s}^{\star}$ with
$\mathcal{L}_{s}$ being the left cell containing $\theta_s$.
In the cases when $(W,S)$ is of type $I_2(4)$ or $I_2(5)$, the $2$-category $\cQ$ was studied in \cite{MaMa}
where  all simple transitive $2$-representations  of $\cQ$ were classified.

If $(W,S)$ is simply laced, then both two-sided cells of ${{\cS}_{\hspace{-1mm}\mathrm{sm}}}$, 
and hence also of $\cQ$,
are strongly regular, see \cite[Corollary~5]{KMMZ}. 
Therefore, in this case, the classification of simple transitive $2$-representations 
for both  ${{\cS}_{\hspace{-1mm}\mathrm{sm}}}$ and $\cQ$ follows directly from 
\cite[Theorem~18]{MM5} in the version of
\cite[Theorem~33]{MM6}.

In this section we combine Theorems~\ref{thmmain} and \ref{thm2} to classify simple transitive 
$2$-rep\-re\-sen\-ta\-ti\-ons of $\cQ$. 

\subsection{The case of $I_2(n)$ with $n$ odd}\label{s3.4}

The following result generalizes \cite[Theorem~5]{MaMa} where the case $n=5$ was considered. Our approach is 
rather different. We denote by $\mathcal{L}$ the left cell of $\cQ$ containing $\theta_s$.

\begin{theorem}\label{thm4}
If $(W,S)$ is of type  $I_2(n)$ with $n$ odd, then every simple transitive $2$-rep\-re\-sen\-tation of
$\cQ$ is equivalent to a cell $2$-representation.
\end{theorem}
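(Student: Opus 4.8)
The plan is to transport the classification to ${{\cS}_{\hspace{-1mm}\mathrm{sm}}}$ through the bijection of Theorem~\ref{thmmain} and then to quote the known answer there. First I would pin down the possible apices. The two-sided cells of $\cQ=\cA_{\mathcal{H}}$ are $\{\mathbbm{1}_{\mathtt{i}}\}$ and $\mathcal{H}$, of which $\{\mathbbm{1}_{\mathtt{i}}\}$ is minimal, so the apex of any simple transitive $2$-representation $\mathbf{M}$ of $\cQ$ is one of these two. If the apex is $\{\mathbbm{1}_{\mathtt{i}}\}$, then every indecomposable $1$-morphism in $\mathcal{H}$ annihilates $\mathbf{M}$, and transitivity then forces all indecomposable objects of $\mathbf{M}$ to be isomorphic; hence $\mathbf{M}$ has rank one and is the trivial cell $2$-representation $\mathbf{C}_{\{\mathbbm{1}_{\mathtt{i}}\}}$. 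It therefore remains to treat the simple transitive $2$-representations of $\cQ$ with apex $\mathcal{H}$.

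For those, I would apply Theorem~\ref{thmmain} with $\cC={{\cS}_{\hspace{-1mm}\mathrm{sm}}}$, which provides mutually inverse bijections $\Theta$ and $\Omega$ between the equivalence classes of simple transitive $2$-representations of ${{\cS}_{\hspace{-1mm}\mathrm{sm}}}$ with apex $\mathcal{J}$ and those of $\cQ$ with apex $\mathcal{H}$. As $(W,S)$ is of type $I_2(n)$ with $n$ odd, it is not of type $I_2(n)$ with $n>4$ even, so Theorem~\ref{thm2}\eqref{thm2.1} shows that every simple transitive $2$-representation of ${{\cS}_{\hspace{-1mm}\mathrm{sm}}}$ with apex $\mathcal{J}$ is equivalent to a cell $2$-representation $\mathbf{C}_{\mathcal{L}'}$, with $\mathcal{L}'$ a left cell of $\mathcal{J}$. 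Combined with the bijection, the theorem reduces to the single assertion that $\Theta(\mathbf{C}_{\mathcal{L}'})$ is a cell $2$-representation of $\cQ$ for every such $\mathcal{L}'$. Indeed, as $\mathcal{L}'$ ranges over the left cells the representations $\Theta(\mathbf{C}_{\mathcal{L}'})$ exhaust, by surjectivity of $\Theta$ and Theorem~\ref{thm2}\eqref{thm2.1}, all apex-$\mathcal{H}$ representations of $\cQ$; if each of them is a cell $2$-representation, we are done.

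The heart of the matter is thus to show that $\Theta$ carries cell $2$-representations to cell $2$-representations. By Corollary~\ref{cor753}\eqref{cor753.2} no quotient is involved, so $\Theta(\mathbf{C}_{\mathcal{L}'})$ is exactly the transitive $2$-subrepresentation $\mathbf{N}$ of Proposition~\ref{prop1n}: the action of $\cQ$ on $\mathrm{add}(\mathrm{F}\,Y)$, where $\mathrm{F}$ is the multiplicity-free sum of the indecomposables in $\mathcal{H}$ and $Y$ is an additive generator of $\mathbf{C}_{\mathcal{L}'}$. The indecomposable objects of $\mathbf{C}_{\mathcal{L}'}$ are indexed by $\mathcal{L}'$, and the matrices recording the action of the various $\theta_h$ with $\theta_h\in\mathcal{H}$ are governed by the dihedral Kazhdan--Lusztig structure constants for $\mathcal{H}$ acting on $\mathcal{L}'$. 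Using the explicit combinatorics for $n$ odd --- where $\mathcal{H}=\mathcal{L}_s\cap\mathcal{L}_s^{\star}$ is the set of the $(n-1)/2$ elements $s,sts,ststs,\dots$ whose reduced word begins and ends with $s$ --- I would identify these data with those of a cell $2$-representation of $\cQ$ attached to a left cell of $\mathcal{H}$ in $\cQ$, for instance by matching them against the realization of the latter inside the principal $2$-representation of $\cQ$.

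I expect this final identification to be the main obstacle. Concretely, one must verify not only that $\mathbf{N}$ is simple transitive but that its combinatorial datum coincides with that of a single left cell of $\mathcal{H}$ in $\cQ$, so that $\mathbf{N}$ is genuinely a cell $2$-representation rather than an abstract simple transitive one. The parity of $n$ enters the whole argument only through Theorem~\ref{thm2}\eqref{thm2.1}: for $n>4$ even that statement fails, ${{\cS}_{\hspace{-1mm}\mathrm{sm}}}$ then possesses simple transitive $2$-representations that are not cell $2$-representations, and these would yield, via $\Omega$, non-cell simple transitive $2$-representations of $\cQ$.
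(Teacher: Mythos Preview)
Your reduction via Theorem~\ref{thmmain} is the right skeleton, and so is the observation that it suffices to check that each $\Theta(\mathbf{C}_{\mathcal{L}'})$ is a cell $2$-representation of $\cQ$. The gap is in your final paragraph: you assert that the parity of $n$ enters only through Theorem~\ref{thm2}\eqref{thm2.1}. This is false, and the case $n=4$ exposes it. For $I_2(4)$, Theorem~\ref{thm2}\eqref{thm2.1} applies just as well (it excludes only $n>4$ even), yet Theorem~\ref{thm5} shows that $\Theta(\mathbf{C}_{\mathcal{L}_t})$ is \emph{not} a cell $2$-representation of $\cQ$. So your ``combinatorial identification'' step cannot go through uniformly; oddness of $n$ must be used precisely there, not in the input from Theorem~\ref{thm2}.

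The point you are missing is a count. In $\cQ$ the two-sided cell $\mathcal{H}$ is a single left cell, so there is exactly one cell $2$-representation of $\cQ$ with apex $\mathcal{H}$. On the ${{\cS}_{\hspace{-1mm}\mathrm{sm}}}$ side there are two left cells $\mathcal{L}_s$ and $\mathcal{L}_t$ in $\mathcal{J}$. Since $\Theta$ is a bijection, the only way every $\Theta(\mathbf{C}_{\mathcal{L}'})$ can be a cell $2$-representation is if $\mathbf{C}_{\mathcal{L}_s}$ and $\mathbf{C}_{\mathcal{L}_t}$ are already equivalent as $2$-representations of ${{\cS}_{\hspace{-1mm}\mathrm{sm}}}$. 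That equivalence is the actual content of the theorem and is exactly what fails for $n$ even. The paper proves it for $n$ odd by a direct argument: one sends $\mathbbm{1}_{\mathtt{i}}$ to the simple object $L_{w_0 s}$ in $\overline{\mathbf{C}_{\mathcal{L}_t}}$ via Yoneda, and then uses the symmetry $s\leftrightarrow w_0 s$, $ts\leftrightarrow w_0 ts$, \dots\ of the dihedral multiplication table (present only for $n$ odd, since then $w_0 s\in\mathcal{L}_t$) together with the explicit description of the underlying algebra to see that this extends to an equivalence $\mathbf{C}_{\mathcal{L}_s}\simeq\mathbf{C}_{\mathcal{L}_t}$. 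Once that is in hand, there is a single equivalence class with apex $\mathcal{J}$, it matches $\mathbf{C}_{\mathcal{L}}$ under $\Theta$, and the theorem follows. Your proposed route of directly matching $\Theta(\mathbf{C}_{\mathcal{L}_t})$ against the cell $2$-representation of $\cQ$ is, via the bijection, logically equivalent to proving this equivalence, so you cannot avoid it.
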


\begin{proof}
By Theorem~\ref{thm2}, under our assumptions, every simple transitive $2$-representation of
${{\cS}_{\hspace{-1mm}\mathrm{sm}}}$ is a cell $2$-representation. The $2$-category ${{\cS}_{\hspace{-1mm}\mathrm{sm}}}$ has
three left cells: the left cell $\mathcal{L}_e$ consisting of $\theta_e$,
the left cell $\mathcal{L}_s$ containing $\theta_s$ and
the left cell $\mathcal{L}_t$ containing $\theta_t$, where $t\in S$ is the other simple
reflection.

The cell $2$-representation $\mathbf{C}_{\mathcal{L}_e}$ of ${{\cS}_{\hspace{-1mm}\mathrm{sm}}}$
is just an action of ${{\cS}_{\hspace{-1mm}\mathrm{sm}}}$ on complex 
vector spaces where $\theta_e$ acts as the identity and
all other $\theta_w$ acts as zero. Via the bijection in Theorem~\ref{thmmain}
(applied to the appropriate quotients of ${{\cS}_{\hspace{-1mm}\mathrm{sm}}}$ and $\cQ$)
this corresponds to the 
cell $2$-representation $\mathbf{C}_{\mathcal{L}_e}$ of $\cQ$.
It is also clear that the bijection in Theorem~\ref{thmmain} matches the 
cell $2$-representation $\mathbf{C}_{\mathcal{L}}$ of $\cQ$ with the
cell $2$-representation $\mathbf{C}_{\mathcal{L}_s}$ of ${{\cS}_{\hspace{-1mm}\mathrm{sm}}}$.

To complete the proof, it is now left to show that the $2$-representations 
$\mathbf{C}_{\mathcal{L}_s}$ and $\mathbf{C}_{\mathcal{L}_t}$ of 
${{\cS}_{\hspace{-1mm}\mathrm{sm}}}$ are equivalent. Indeed, consider the
abelianization $\overline{\mathbf{C}_{\mathcal{L}_t}}$.
In the category $\overline{\mathbf{C}_{\mathcal{L}_t}}(\mathtt{i})$,
consider the simple top $L_{w_0s}$ of the projective object 
$0\to\theta_{w_0s}$.
By the Yoneda Lemma, there is a unique $2$-natural transformation $\Psi$ 
from the principal  $2$-representation $\mathbf{P}_{\mathtt{i}}$ to 
$\overline{\mathbf{C}_{\mathcal{L}_t}}$ which sends
$\mathbbm{1}_{\mathtt{i}}$ to $L_{w_0s}$. The multiplication table
of ${{\cS}_{\hspace{-1mm}\mathrm{sm}}}$ is obtained inductively from the following formulae:
\begin{displaymath}
\theta_s \theta_w=
\begin{cases}
\theta_s, & w=e;\\
\theta_{st}, & w=t;\\
\theta_{tsw_0}, & w=sw_0;\\
\theta_{w}\oplus \theta_{w}, & \ell(sw)<\ell(w);\\
\theta_{sw}\oplus \theta_{tw}, & \text{else}.
\end{cases}\qquad
\theta_t \theta_w=
\begin{cases}
\theta_t, & w=e;\\
\theta_{ts}, & w=s;\\
\theta_{stw_0}, & w=tw_0;\\
\theta_{w}\oplus \theta_{w}, & \ell(tw)<\ell(w);\\
\theta_{tw}\oplus \theta_{sw}, & \text{else}.
\end{cases}
\end{displaymath}
We note the symmetry of the above formulae with respect to
swapping, for $w$, the elements $s$ and $w_0s$, the elements
$ts$ and $w_0ts$, and so on. 
From this and the explicit description of the endomorphism algebras of 
the underlying category $\mathbf{C}_{\mathcal{L}_t}(\mathtt{i})$
in \cite[Subsection~7.2]{KMMZ},
it follows easily that $\Psi$ maps 
$\theta_w\in\mathcal{L}_s$ to an indecomposable projective object
in $\mathbf{C}_{\mathcal{L}_t}(\mathtt{i})$.
By comparing the Cartan matrices of the endomorphism algebras of 
the underlying categories, we see this $\Psi$ induces an equivalence
between $\mathbf{C}_{\mathcal{L}_s}$ and $\mathbf{C}_{\mathcal{L}_t}$.
The claim of the theorem follows.
\end{proof}

\subsection{The case of $I_2(4)$}\label{s3.5}

For $\cX\in\{\cQ,{{\cS}_{\hspace{-1mm}\mathrm{sm}}}\}$ and $w\in\{e,s,t\}$ (here $w\neq t$ if $\cX=\cQ$),
we denote by $\mathcal{L}_w^{(\ccX)}$ the left cell of $\cX$ containing $\theta_w$.
The following result significantly simplifies the proof of  \cite[Theorem~12]{MaMa}. 

\begin{theorem}\label{thm5}
If $(W,S)$ is of type  $I_2(4)$, then $\cQ$
has exactly three equivalence classes of simple transitive $2$-rep\-re\-sen\-tations,
namely $\mathbf{C}_{\mathcal{L}_e^{(\ccQ)}}$, $\mathbf{C}_{\mathcal{L}_s^{(\ccQ)}}$
and $\Theta(\mathbf{C}_{\mathcal{L}_t^{({\ccS}_{\hspace{-1mm}\mathrm{sm}})}})$.
\end{theorem}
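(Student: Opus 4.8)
The plan is to apply Theorem~\ref{thmmain} to the pair $\cC={{\cS}_{\hspace{-1mm}\mathrm{sm}}}$ and $\cA=\cQ=\cA_{\mathcal{H}}$, with $\mathcal{H}=\mathcal{L}_s\cap\mathcal{L}_s^{\star}$ and $\mathcal{J}$ the unique maximal two-sided cell of ${{\cS}_{\hspace{-1mm}\mathrm{sm}}}$. This produces a bijection between the equivalence classes of simple transitive $2$-representations of ${{\cS}_{\hspace{-1mm}\mathrm{sm}}}$ with apex $\mathcal{J}$ and those of $\cQ$ with apex $\mathcal{H}$. Since $\cQ$ has exactly two two-sided cells, namely $\{\theta_e\}$ and $\mathcal{H}$, every simple transitive $2$-representation of $\cQ$ has apex $\{\theta_e\}$ or $\mathcal{H}$, so it suffices to count the classes in each family. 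For apex $\{\theta_e\}$, the cell $\mathcal{H}$ is annihilated, so $\theta_s$ and $\theta_{sts}$ act as zero and transitivity forces the representation to be of rank one; hence there is a unique such class, namely $\mathbf{C}_{\mathcal{L}_e^{(\ccQ)}}$.

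For apex $\mathcal{H}$ I would pass through the bijection above. By part~\eqref{thm2.1} of Theorem~\ref{thm2} (type $I_2(4)$ is \emph{not} of type $I_2(n)$ with $n>4$ even), every simple transitive $2$-representation of ${{\cS}_{\hspace{-1mm}\mathrm{sm}}}$ is a cell $2$-representation; among the three left cells $\mathcal{L}_e,\mathcal{L}_s,\mathcal{L}_t$, only $\mathbf{C}_{\mathcal{L}_s}$ and $\mathbf{C}_{\mathcal{L}_t}$ have apex $\mathcal{J}$ (while $\mathbf{C}_{\mathcal{L}_e}$ has apex $\{\theta_e\}$). The key point, and the place where the even case genuinely diverges from the odd case treated in Theorem~\ref{thm4}, is to show that $\mathbf{C}_{\mathcal{L}_s}\not\cong\mathbf{C}_{\mathcal{L}_t}$ as $2$-representations of ${{\cS}_{\hspace{-1mm}\mathrm{sm}}}$. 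Any equivalence of $2$-representations merely relabels indecomposable objects, conjugating the action matrix $[\theta_s]$ by a permutation matrix, so $\mathrm{tr}\,[\theta_s]$ is an invariant of the equivalence class. Computing the action from the multiplication of ${{\cS}_{\hspace{-1mm}\mathrm{sm}}}$ in type $I_2(4)$ (using $\theta_{w_0}=0$), one finds that $\theta_s$ acts on $\mathbf{C}_{\mathcal{L}_s}$ with trace $4$ but on $\mathbf{C}_{\mathcal{L}_t}$ with trace $2$; hence they are inequivalent. Consequently ${{\cS}_{\hspace{-1mm}\mathrm{sm}}}$ has exactly two simple transitive $2$-representations with apex $\mathcal{J}$, and the bijection yields exactly two classes with apex $\mathcal{H}$ for $\cQ$, namely $\Theta(\mathbf{C}_{\mathcal{L}_s})$ and $\Theta(\mathbf{C}_{\mathcal{L}_t})$.

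It remains to identify these two. As in the proof of Theorem~\ref{thm4}, $\Theta$ matches the cell $2$-representation $\mathbf{C}_{\mathcal{L}_s}$ of ${{\cS}_{\hspace{-1mm}\mathrm{sm}}}$ with the cell $2$-representation $\mathbf{C}_{\mathcal{L}_s^{(\ccQ)}}$ of $\cQ$, where $\mathcal{L}_s^{(\ccQ)}$ is the left cell of $\cQ$ containing $\theta_s$. A short computation shows $\mathcal{H}=\{\theta_s,\theta_{sts}\}$ is a single left cell of $\cQ$: from $\theta_{sts}\theta_{sts}\cong\theta_s^{\oplus 2}$ and $\theta_{sts}\theta_s\cong\theta_{sts}^{\oplus 2}$ one gets $\theta_s\geq_L\theta_{sts}$ and $\theta_{sts}\geq_L\theta_s$ within $\cQ$. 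Thus $\mathbf{C}_{\mathcal{L}_s^{(\ccQ)}}$ is the unique cell $2$-representation of $\cQ$ with apex $\mathcal{H}$, and the remaining class $\Theta(\mathbf{C}_{\mathcal{L}_t^{({\ccS}_{\hspace{-1mm}\mathrm{sm}})}})$ is \emph{not} a cell $2$-representation, recovering the non-cell example of \cite{MaMa}. Collecting the three classes $\mathbf{C}_{\mathcal{L}_e^{(\ccQ)}}$, $\mathbf{C}_{\mathcal{L}_s^{(\ccQ)}}$ and $\Theta(\mathbf{C}_{\mathcal{L}_t^{({\ccS}_{\hspace{-1mm}\mathrm{sm}})}})$ completes the count.

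The main obstacle is the inequivalence $\mathbf{C}_{\mathcal{L}_s}\not\cong\mathbf{C}_{\mathcal{L}_t}$: everything else is bookkeeping built on Theorem~\ref{thmmain} and Theorem~\ref{thm2}, but this inequivalence is precisely the phenomenon that fails in the odd case and is what forces the appearance of the third, non-cell representation. I expect the trace invariant $\mathrm{tr}\,[\theta_s]$ to settle it cleanly, so that the only real care needed is in verifying the action matrices from the multiplication table of ${{\cS}_{\hspace{-1mm}\mathrm{sm}}}$ and confirming that $\mathcal{H}$ forms a single left cell of $\cQ$.
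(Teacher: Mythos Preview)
Your argument is correct and follows the same overall strategy as the paper: invoke the classification of simple transitive $2$-representations of ${{\cS}_{\hspace{-1mm}\mathrm{sm}}}$ in type $I_2(4)$ and then transport it to $\cQ$ via Theorem~\ref{thmmain}, handling the apex $\{\theta_e\}$ separately exactly as in the proof of Theorem~\ref{thm4}.

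The one genuine difference is how the inequivalence $\mathbf{C}_{\mathcal{L}_s}\not\cong\mathbf{C}_{\mathcal{L}_t}$ is obtained. The paper simply cites \cite[Theorem~6.12]{Zi1}, which already asserts that ${{\cS}_{\hspace{-1mm}\mathrm{sm}}}$ has precisely the three listed equivalence classes (in particular, that $\mathbf{C}_{\mathcal{L}_s}$ and $\mathbf{C}_{\mathcal{L}_t}$ are distinct). You instead supply a direct invariant, the trace $\mathrm{tr}\,[\theta_s]$, and your values $4$ versus $2$ are correct: on $\mathbf{C}_{\mathcal{L}_s}$ one has $\theta_s\theta_s\cong\theta_s^{\oplus 2}$ and $\theta_s\theta_{sts}\cong\theta_{sts}^{\oplus 2}$, while on $\mathbf{C}_{\mathcal{L}_t}$ only $\theta_s\theta_{st}\cong\theta_{st}^{\oplus 2}$ contributes to the diagonal. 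This is a pleasant, self-contained alternative to the citation; the paper makes the analogous remark (``for even $n$, all cell $2$-representations of ${{\cS}_{\hspace{-1mm}\mathrm{sm}}}$ are inequivalent'') without proof in the proof of Theorem~\ref{thm6}. Your verification that $\mathcal{H}=\{\theta_s,\theta_{sts}\}$ is a single left cell of $\cQ$ (via $\theta_{sts}\theta_{sts}\cong\theta_s^{\oplus 2}$ in the small quotient) and the ensuing identification of $\Theta(\mathbf{C}_{\mathcal{L}_s})$ with $\mathbf{C}_{\mathcal{L}_s^{(\ccQ)}}$ are likewise correct and make explicit what the paper leaves implicit.
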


We note that the $2$-representation
$\Theta(\mathbf{C}_{\mathcal{L}_t^{({\ccS}_{\hspace{-1mm}\mathrm{sm}})}})$ is not equivalent to
a cell $2$-representation.

\begin{proof}
By \cite[Theorem~6.12]{Zi1}, up to equivalence, simple transitive $2$-representations of ${{\cS}_{\hspace{-1mm}\mathrm{sm}}}$ are
exactly $\mathbf{C}_{\mathcal{L}_e^{({\ccS}_{\hspace{-1mm}\mathrm{sm}})}}$,
$\mathbf{C}_{\mathcal{L}_s^{({\ccS}_{\hspace{-1mm}\mathrm{sm}})}}$ and
$\mathbf{C}_{\mathcal{L}_t^{({\ccS}_{\hspace{-1mm}\mathrm{sm}})}}$.
Now the claim follows from Theorem~\ref{thmmain}
similarly to the first part of the proof of Theorem~\ref{thm4}.
\end{proof}

\begin{remark}\label{remMaMa}
{\em As already mentioned, the argument above is a significant simplification of 
\cite[Theorem~12]{MaMa}. However, the original proof of \cite[Theorem~12]{MaMa} is crucially
used in \cite{KMMZ} and hence in the results below.
}
\end{remark}

\subsection{The case of $I_2(n)$ with $n\neq 12,18,30$ and $n>4$ even}\label{s3.7}

\begin{theorem}\label{thm6}
If $(W,S)$ is of type $I_2(n)$ with $n\neq 12,18,30$ and $n>4$ even, then $\cQ$
has exactly five equivalence classes of simple transitive $2$-rep\-re\-sen\-tations
(the images under $\Theta$ of the  five equivalence classes of simple transitive
$2$-representations of ${{\cS}_{\hspace{-1mm}\mathrm{sm}}}$).
\end{theorem}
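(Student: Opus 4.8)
The plan is to reduce the count for $\cQ$ to the known count for ${{\cS}_{\hspace{-1mm}\mathrm{sm}}}$ and then transport it across the bijection $\Theta$ of Theorem~\ref{thmmain}, exactly as was done in the proofs of Theorems~\ref{thm4} and \ref{thm5}. Concretely, I would begin by invoking Theorem~\ref{thm2}\eqref{thm2.2}: under the hypotheses $n>4$ even and $n\neq 12,18,30$, the $2$-category ${{\cS}_{\hspace{-1mm}\mathrm{sm}}}$ has exactly five equivalence classes of simple transitive $2$-representations, namely the three cell $2$-representations $\mathbf{C}_{\mathcal{L}_e}$, $\mathbf{C}_{\mathcal{L}_s}$, $\mathbf{C}_{\mathcal{L}_t}$ attached to the three left cells of ${{\cS}_{\hspace{-1mm}\mathrm{sm}}}$, together with the two extra classes produced there. (For $n$ even the three cell classes are genuinely distinct, in contrast to the $n$ odd situation of Theorem~\ref{thm4}; this is already accounted for in the count of five.)

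Next I would sort these five classes by their apex. Since ${{\cS}_{\hspace{-1mm}\mathrm{sm}}}$ has only the two two-sided cells $\mathcal{J}_e=\{\theta_e\}$ and $\mathcal{J}$, every simple transitive $2$-representation has apex one of these. Now $\mathbf{C}_{\mathcal{L}_e}$ has apex $\mathcal{J}_e$, while $\mathbf{C}_{\mathcal{L}_s}$ and $\mathbf{C}_{\mathcal{L}_t}$ have apex $\mathcal{J}$; and the two extra classes must also have apex $\mathcal{J}$, for any simple transitive $2$-representation with apex $\mathcal{J}_e$ annihilates $\mathcal{J}$, hence factors through the trivial quotient of ${{\cS}_{\hspace{-1mm}\mathrm{sm}}}$ and is therefore forced to be the unique class $\mathbf{C}_{\mathcal{L}_e}$. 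Thus exactly four of the five classes have apex $\mathcal{J}$, and exactly one has apex $\mathcal{J}_e$.

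Then I would apply Theorem~\ref{thmmain} to the pair $({{\cS}_{\hspace{-1mm}\mathrm{sm}}},\cQ)$: the map $\Theta$ is a bijection between the four apex-$\mathcal{J}$ classes of ${{\cS}_{\hspace{-1mm}\mathrm{sm}}}$ and the apex-$\mathcal{H}$ classes of $\cQ$, giving exactly four simple transitive $2$-representations of $\cQ$ with apex $\mathcal{H}$. It remains to count the simple transitive $2$-representations of $\cQ$ whose apex is the only other two-sided cell, $\{\mathbbm{1}_{\mathtt{i}}\}$. As this cell is a single, trivially strongly regular element, \cite[Theorem~18]{MM5} in the form \cite[Theorem~33]{MM6} yields a unique such class, namely $\mathbf{C}_{\mathcal{L}_e^{(\ccQ)}}$, which matches $\mathbf{C}_{\mathcal{L}_e}$ of ${{\cS}_{\hspace{-1mm}\mathrm{sm}}}$ via Theorem~\ref{thmmain} applied to the appropriate trivial subquotients (as in the first part of the proof of Theorem~\ref{thm4}). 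Adding $4+1$ gives exactly five equivalence classes, all arising as images of the five classes of ${{\cS}_{\hspace{-1mm}\mathrm{sm}}}$, as claimed.

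The main obstacle, such as it is, is purely the bookkeeping around the apex-$\mathcal{J}_e$ class: $\Theta$ is defined only on apex-$\mathcal{J}$ representations, so $\mathbf{C}_{\mathcal{L}_e}$ must be handled separately, and one must confirm both that $\cQ$ has a unique simple transitive $2$-representation with apex $\{\mathbbm{1}_{\mathtt{i}}\}$ and that $\Theta$, being a bijection, neither loses nor doubles any of the four apex-$\mathcal{J}$ classes. Since all the genuine difficulty is absorbed into Theorem~\ref{thm2}\eqref{thm2.2} and Theorem~\ref{thmmain}, the present argument is a direct adaptation of the proof of Theorem~\ref{thm5}, with the count $3$ replaced by $5$.
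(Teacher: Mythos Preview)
Your proposal is correct and takes essentially the same approach as the paper: both derive the result directly from Theorems~\ref{thmmain} and \ref{thm2}, with the observation that for even $n$ the three cell $2$-representations of ${{\cS}_{\hspace{-1mm}\mathrm{sm}}}$ are pairwise inequivalent. Your version is simply more explicit about the apex bookkeeping (splitting off the $\mathcal{J}_e$ class and handling it separately), which the paper compresses into ``similarly to the above''.
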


\begin{proof}
Similarly to the above, this follows from Theorems~\ref{thmmain} and \ref{thm2}.
We just note that, for even $n$, all cell $2$-representations of 
${{\cS}_{\hspace{-1mm}\mathrm{sm}}}$ are inequivalent.
\end{proof}

\subsection{The cases of $I_2(12)$, $I_2(18)$ and $I_2(30)$}\label{s3.9}

\begin{proposition}\label{prop8}
If $(W,S)$ is of type $I_2(12)$, $I_2(18)$ or $I_2(30)$, then $\cQ$
has at least seven equivalence classes of simple transitive $2$-rep\-re\-sen\-tations
(the images under $\Theta$ of the seven equivalence classes of simple transitive
$2$-representations of ${{\cS}_{\hspace{-1mm}\mathrm{sm}}}$).
Under the additional assumption of gradability, these exhaust all 
simple transitive $2$-rep\-re\-sen\-tations of $\cQ$.
\end{proposition}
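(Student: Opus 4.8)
The plan is to transport the known classification for ${{\cS}_{\hspace{-1mm}\mathrm{sm}}}$ across the bijection of Theorem~\ref{thmmain}, exactly in the spirit of the proofs of Theorems~\ref{thm5} and~\ref{thm6}, and then to verify that the gradability hypothesis is respected by this transport.

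First, I would count the simple transitive $2$-representations of ${{\cS}_{\hspace{-1mm}\mathrm{sm}}}$ in these three types. By Theorem~\ref{thm2}\eqref{thm2.3}, beyond the three cell $2$-representations $\mathbf{C}_{\mathcal{L}_e^{({\ccS}_{\hspace{-1mm}\mathrm{sm}})}}$, $\mathbf{C}_{\mathcal{L}_s^{({\ccS}_{\hspace{-1mm}\mathrm{sm}})}}$ and $\mathbf{C}_{\mathcal{L}_t^{({\ccS}_{\hspace{-1mm}\mathrm{sm}})}}$, which are pairwise inequivalent since $n$ is even (see the proof of Theorem~\ref{thm6}), there are at least four further equivalence classes, yielding at least seven in total; under gradability these exhaust everything. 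I would then sort them by apex: the cell $2$-representation $\mathbf{C}_{\mathcal{L}_e^{({\ccS}_{\hspace{-1mm}\mathrm{sm}})}}$ has apex $\mathcal{J}_e=\{\theta_e\}$, while the other six all have apex $\mathcal{J}$. The latter is forced because ${{\cS}_{\hspace{-1mm}\mathrm{sm}}}$ has only the two two-sided cells $\mathcal{J}_e$ and $\mathcal{J}$, and any simple transitive $2$-representation with apex $\mathcal{J}_e$ factors through the quotient annihilating $\mathcal{J}$ and hence coincides with the trivial $\mathbf{C}_{\mathcal{L}_e^{({\ccS}_{\hspace{-1mm}\mathrm{sm}})}}$.

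Next I would invoke Theorem~\ref{thmmain} with $\cC={{\cS}_{\hspace{-1mm}\mathrm{sm}}}$ and $\cA=\cQ=\cA_{\mathcal{H}}$. The map $\Theta$ is then a bijection between the six apex-$\mathcal{J}$ classes of ${{\cS}_{\hspace{-1mm}\mathrm{sm}}}$ and the simple transitive $2$-representations of $\cQ$ with apex $\mathcal{H}$, producing six pairwise inequivalent ones of the latter kind. The remaining, apex-$\mathcal{J}_e$ class $\mathbf{C}_{\mathcal{L}_e^{({\ccS}_{\hspace{-1mm}\mathrm{sm}})}}$ is matched, just as in the first part of the proof of Theorem~\ref{thm4}, with the cell $2$-representation $\mathbf{C}_{\mathcal{L}_e^{(\ccQ)}}$, which is the unique simple transitive $2$-representation of $\cQ$ with apex the minimal cell $\{\mathbbm{1}_{\mathtt{i}}\}$. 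As this last one has a different apex, all seven resulting $2$-representations of $\cQ$ are pairwise inequivalent, which establishes the lower bound of seven.

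Finally, for the exhaustiveness claim under gradability I would run the correspondence backwards via $\Omega$. Since $\cQ$ has exactly the two two-sided cells $\{\mathbbm{1}_{\mathtt{i}}\}$ and $\mathcal{H}$, any simple transitive $2$-representation $\mathbf{N}$ of $\cQ$ has apex one of these; if the apex is $\{\mathbbm{1}_{\mathtt{i}}\}$ then $\mathbf{N}\cong\mathbf{C}_{\mathcal{L}_e^{(\ccQ)}}$, and if the apex is $\mathcal{H}$ then $\Omega(\mathbf{N})\cong\mathrm{inj}_{\underline{\ccC}}(\mathrm{D}_X)$ is an apex-$\mathcal{J}$ simple transitive $2$-representation of ${{\cS}_{\hspace{-1mm}\mathrm{sm}}}$ with $\Theta(\Omega(\mathbf{N}))\cong\mathbf{N}$. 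By Theorem~\ref{thm2}\eqref{thm2.3} the former is one of the six known classes as soon as it is gradable, forcing $\mathbf{N}$ to be one of the six images and giving exactly seven classes in all. The main obstacle is precisely this last implication: one must check that gradability passes across the bijection, that is, that a graded structure on $\mathbf{N}$ induces one on the coalgebra $1$-morphism $\mathrm{D}_X$ and hence on $\mathrm{inj}_{\underline{\ccC}}(\mathrm{D}_X)\cong\Omega(\mathbf{N})$. I expect this to hold because every construction entering $\Theta$ and $\Omega$—restriction to $\cA$, passage to the $2$-subrepresentation generated by $\mathrm{F}\,M$, the simple transitive quotient, and the internal-hom coalgebra $1$-morphism—is compatible with the grading coming from the standard grading on Soergel bimodules; making this compatibility precise is the one genuinely nontrivial point, the remainder being a direct application of Theorems~\ref{thmmain} and~\ref{thm2}.
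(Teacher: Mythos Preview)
Your proposal is correct and takes essentially the same approach as the paper, whose entire proof is the single sentence that the claim follows from Theorems~\ref{thmmain} and~\ref{thm2} analogously to the preceding cases. Your explicit concern about transporting gradability across $\Omega$ is legitimate, but the paper does not address it either; it is tacitly assumed for the reasons you sketch.
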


\begin{proof}
Similarly to the above, this follows from Theorems~\ref{thmmain} and \ref{thm2}.
\end{proof}

\subsection{The cases of rank higher than $2$}\label{s3.10}

\begin{theorem}\label{thm8-1}
If $(W,S)$ is of type $H_3$ or $H_4$, then every simple 
transitive $2$-rep\-re\-sen\-tation of
$\cQ$ is equivalent to a cell $2$-representation.
\end{theorem}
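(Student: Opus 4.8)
The plan is to argue exactly as in the proofs of Theorems~\ref{thm4}--\ref{thm6}, feeding the classification on the small quotient into the bijection of Theorem~\ref{thmmain}. Since neither $H_3$ nor $H_4$ is of type $I_2(n)$, part~\eqref{thm2.1} of Theorem~\ref{thm2} applies and shows that every simple transitive $2$-representation of ${{\cS}_{\hspace{-1mm}\mathrm{sm}}}$ is equivalent to a cell $2$-representation. It then remains to transport this statement across the bijection $\Theta$ between simple transitive $2$-representations of ${{\cS}_{\hspace{-1mm}\mathrm{sm}}}$ with apex $\mathcal{J}$ and those of $\cQ=\cA_{\mathcal{H}}$ with apex $\mathcal{H}$.

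First I would sort the simple transitive $2$-representations of $\cQ$ by apex. The $2$-category $\cQ$ has exactly two two-sided cells, namely $\{\mathbbm{1}_{\mathtt{i}}\}$ and $\mathcal{H}$. A simple transitive $2$-representation with apex $\{\mathbbm{1}_{\mathtt{i}}\}$ annihilates every $1$-morphism in $\mathcal{H}$; since $\{\mathbbm{1}_{\mathtt{i}}\}$ is a singleton two-sided cell, hence strongly regular, \cite[Theorem~18]{MM5} in the version of \cite[Theorem~33]{MM6} (exactly as invoked in the proof of Corollary~\ref{corn2}) guarantees that there is a unique such representation, which is the cell $2$-representation $\mathbf{C}_{\mathcal{L}_e^{(\ccQ)}}$. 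This settles the identity apex, and I am left with the representations with apex $\mathcal{H}$.

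For these, Theorem~\ref{thmmain} gives a bijection $\Theta$ \emph{onto} the set of all apex-$\mathcal{H}$ simple transitive $2$-representations of $\cQ$, whose source is the set of apex-$\mathcal{J}$ simple transitive $2$-representations of ${{\cS}_{\hspace{-1mm}\mathrm{sm}}}$; by the first paragraph these are precisely the cell $2$-representations $\mathbf{C}_{\mathcal{L}''}$, with $\mathcal{L}''$ running over the left cells of $\mathcal{J}$. Consequently it suffices to prove that each $\Theta(\mathbf{C}_{\mathcal{L}''})$ is a cell $2$-representation of $\cQ$: this forces the image of $\Theta$, which is all of the apex-$\mathcal{H}$ simple transitive $2$-representations, to consist of cell $2$-representations, and the reverse inclusion is clear. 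By Corollary~\ref{cor753}\eqref{cor753.2}, $\Theta(\mathbf{C}_{\mathcal{L}''})$ is literally the $2$-subrepresentation of the restriction of $\mathbf{C}_{\mathcal{L}''}$ to $\cQ$ carried by $\mathrm{add}(\mathrm{F}\, M)$ (with $\mathrm{F}=\mathrm{F}^{\star}$ the multiplicity-free sum of the indecomposables in $\mathcal{H}$, as in Proposition~\ref{prop1n}), so the task is concrete: identify the indecomposable summands of $\mathrm{F}\, M$ inside $\mathbf{C}_{\mathcal{L}''}$ and match the resulting $\cQ$-action with the cell $2$-representation $\mathbf{C}_{\mathcal{L}'}$ attached to a suitable left cell $\mathcal{L}'$ of $\mathcal{H}$.

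The main obstacle is exactly this last matching, and it is genuinely type-specific rather than formal: the analogous statement \emph{fails} for $I_2(4)$, where, as recorded after Theorem~\ref{thm5}, $\Theta(\mathbf{C}_{\mathcal{L}_t^{({\ccS}_{\hspace{-1mm}\mathrm{sm}})}})$ is \emph{not} a cell $2$-representation. What makes $H_3$ and $H_4$ work is the explicit left/right cell structure of $\mathcal{J}$ in these types (computed, as acknowledged, with T.~Kildetoft's help), through which one checks that localizing the restricted cell $2$-representation at $\mathrm{add}(\mathrm{F}\, M)$ reproduces exactly one cell $2$-representation of $\cQ$ for each $\mathcal{L}''$; equivalently, one verifies that the number of inequivalent cell $2$-representations of $\cQ$ with apex $\mathcal{H}$ coincides with the number of those of ${{\cS}_{\hspace{-1mm}\mathrm{sm}}}$ with apex $\mathcal{J}$. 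Since the only edge labels occurring in $H_3$ and $H_4$ are $3$ and $5$, the rank-two parabolic subgroups are of types $A_2$ and $I_2(5)$, none of which is of the problematic type $I_2(n)$ with $n>4$ even; I expect this absence to be what ultimately guarantees the matching, after which the identification of $\Theta(\mathbf{C}_{\mathcal{L}''})$ with a cell $2$-representation proceeds as in the first part of the proof of Theorem~\ref{thm4}.
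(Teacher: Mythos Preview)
Your overall strategy---feed Theorem~\ref{thm2}\eqref{thm2.1} into the bijection of Theorem~\ref{thmmain}---is reasonable, but it is \emph{not} the route the paper takes, and the step you flag as ``the main obstacle'' is left genuinely unresolved. You need that $\Theta(\mathbf{C}_{\mathcal{L}''})$ is a cell $2$-representation of $\cQ$ for every left cell $\mathcal{L}''\subset\mathcal{J}$, or equivalently that the number of equivalence classes of cell $2$-representations on the two sides agree. But the statement that all $\mathbf{C}_{\mathcal{L}''}$ with $\mathcal{L}''\subset\mathcal{J}$ are equivalent in types $H_3$, $H_4$ is precisely Corollary~\ref{cor8-11}, which the paper \emph{derives from} Theorem~\ref{thm8-1}; invoking it here would be circular. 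Your fallback, an explicit computation with the cell structure ``computed, as acknowledged, with T.~Kildetoft's help'', does not help either: the appendix contains the cell data for $B_3$ and $B_4$, not for $H_3$ or $H_4$. So as written the proposal does not close the gap.

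The paper's argument sidesteps all of this with a short reduction. Using \cite[Subsection~4.2]{KMMZ} one identifies the indecomposable $1$-morphisms of $\cQ$ as $\theta_e$, $\theta_s$, $\theta_{sts}$, where $s,t$ are the two simple reflections with $(st)^5=e$; the general construction of Soergel bimodules then shows that the $\mathcal{H}$-simple quotient of $\cQ$ in type $H_3$ or $H_4$ is biequivalent to the $\mathcal{H}$-simple quotient of $\cQ$ in type $I_2(5)$. Theorem~\ref{thm4} (the odd dihedral case) now finishes the proof immediately. Your intuition that ``edge labels $3$ and $5$'' are what make $H_3$ and $H_4$ behave well is exactly right, but it enters through this biequivalence with $I_2(5)$, not through a separate cell-by-cell verification of $\Theta$ on cell $2$-representations.
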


\begin{proof}
From \cite[Subsection~4.2]{KMMZ}, the indecomposable $1$-morphisms in 
$\cQ$ are $\theta_e$, $\theta_s$ and $\theta_{sts}$, where $s$ and $t$
are two simple reflections such that $(st)^5=e$. The general definition of
Soergel bimodules implies that the 
$\mathcal{H}$-simple quotient of $\cQ$ in types $H_3$ and $H_4$ is biequivalent
to the $\mathcal{H}$-simple quotient of the $2$-category $\cQ$ in type $I_2(5)$.
Therefore the claim follows from Theorem~\ref{thm4} (see also \cite[Theorem~5]{MaMa}).
\end{proof}

\begin{corollary}\label{cor8-11}
If $(W,S)$ is of type $H_3$ or $H_4$ and $\mathcal{L}_1$ and $\mathcal{L}_2$
are two left cells in $\mathcal{J}$, then the cell $2$-representations 
$\mathbf{C}_{\mathcal{L}_1}$ and $\mathbf{C}_{\mathcal{L}_2}$ of
${{\cS}_{\hspace{-1mm}\mathrm{sm}}}$ are equivalent.
\end{corollary}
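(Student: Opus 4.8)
The plan is to reproduce, in this setting, the reasoning used at the end of the proof of Corollary~\ref{corn2}. Both $\mathbf{C}_{\mathcal{L}_1}$ and $\mathbf{C}_{\mathcal{L}_2}$ are simple transitive $2$-representations of ${{\cS}_{\hspace{-1mm}\mathrm{sm}}}$ with apex $\mathcal{J}$, so it is enough to prove that ${{\cS}_{\hspace{-1mm}\mathrm{sm}}}$ has, up to equivalence, a \emph{unique} simple transitive $2$-representation with apex $\mathcal{J}$; the two cell $2$-representations are then forced to coincide.

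First I would verify that ${{\cS}_{\hspace{-1mm}\mathrm{sm}}}$ fits the framework of Subsection~\ref{s2.6}: its two-sided cells are $\{\theta_e\}$ and $\mathcal{J}$, with $\mathcal{J}$ the unique maximal one because $\theta_e=\mathbbm{1}_{\mathtt{i}}$ is the minimum of the two-sided order. Theorem~\ref{thmmain} then supplies the bijection $\Theta$ between equivalence classes of simple transitive $2$-representations of ${{\cS}_{\hspace{-1mm}\mathrm{sm}}}$ with apex $\mathcal{J}$ and equivalence classes of simple transitive $2$-representations of $\cQ=\cA_{\mathcal{H}}$ with apex $\mathcal{H}$. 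Hence the desired uniqueness for ${{\cS}_{\hspace{-1mm}\mathrm{sm}}}$ is equivalent to the uniqueness of a simple transitive $2$-representation of $\cQ$ with apex $\mathcal{H}$.

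This last uniqueness is what Theorem~\ref{thm8-1} provides: it shows that every simple transitive $2$-representation of $\cQ$ (in types $H_3$ and $H_4$) is a cell $2$-representation, by identifying the $\mathcal{H}$-simple quotient of $\cQ$ with that of $\cQ$ in type $I_2(5)$. In type $I_2(5)$ one knows, combining Theorem~\ref{thm2}\eqref{thm2.1} with the equivalence $\mathbf{C}_{\mathcal{L}_s}\cong\mathbf{C}_{\mathcal{L}_t}$ established in the proof of Theorem~\ref{thm4} and transported through $\Theta$, that $\cQ$ carries exactly one simple transitive $2$-representation with apex $\mathcal{H}$ (equivalently, $\mathcal{H}$ is a single left cell of $\cQ$, whence there is a single cell $2$-representation $\mathbf{C}_{\mathcal{L}}$ with that apex). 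Since the equivalence classes of simple transitive $2$-representations with a prescribed apex are an invariant of the corresponding simple quotient, this count survives the biequivalence and yields the same conclusion for $\cQ$ in types $H_3$ and $H_4$. Running it back through $\Theta$ gives the uniqueness for ${{\cS}_{\hspace{-1mm}\mathrm{sm}}}$, and therefore $\mathbf{C}_{\mathcal{L}_1}\cong\mathbf{C}_{\mathcal{L}_2}$.

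The step I expect to be the main obstacle is precisely this transfer of a counting statement across the biequivalence of $\mathcal{H}$-simple quotients: one must be sure that the number of equivalence classes of simple transitive $2$-representations with apex $\mathcal{H}$ depends only on the $\mathcal{H}$-simple quotient and not on $\cQ$ itself, so that the explicit rank-two computations of \cite{MaMa} in type $I_2(5)$ can be imported rather than repeated in types $H_3$ and $H_4$. This is exactly the reduction already carried out inside the proof of Theorem~\ref{thm8-1}, so it can be cited; everything else in the argument is a formal consequence of Theorem~\ref{thmmain} and the classification results recalled above.
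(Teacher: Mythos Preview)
Your proposal is correct and follows essentially the same route as the paper: use Theorem~\ref{thmmain} to translate the question to $\cQ$, invoke Theorem~\ref{thm8-1} to see that simple transitive $2$-representations of $\cQ$ with apex $\mathcal{H}$ are cell $2$-representations, observe there is only one such cell $2$-representation, and transport uniqueness back. The paper compresses all of this into one sentence, noting simply that $\mathcal{H}$ is the only non-identity two-sided cell in $\cQ$ (and hence the only left cell there), so your parenthetical remark is in fact the cleanest justification and the detour through the $I_2(5)$ count is unnecessary.
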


\begin{proof}
This follows from Theorem~\ref{thm8-1} as $\mathcal{H}$ is the only non-identity
two-sided cell in $\cQ$.
\end{proof}

\begin{theorem}\label{thm8-2}
If $(W,S)$ is of type $F_4$ or $B_n$, for $n>2$, then $\cQ$ has three 
equivalence classes of simple 
transitive $2$-rep\-re\-sen\-tations, two of which are cell $2$-representations.
\end{theorem}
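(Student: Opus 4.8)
The plan is to proceed exactly as in the proof of Theorem~\ref{thm8-1}, reducing the problem to the rank-two situation of type $I_2(4)$ settled in Theorem~\ref{thm5}.

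First, I would make the count of simple transitive $2$-representations independent of the (a priori arbitrary) choice of $s\in S$. Since $\mathcal{H}$ is a two-sided cell of $\cQ=\cA_{\mathcal{H}}$ and $\mathbbm{1}_{\mathtt{i}}$ is its own cell, every simple transitive $2$-representation of $\cQ$ has apex either $\{\mathbbm{1}_{\mathtt{i}}\}$ or $\mathcal{H}$; there is exactly one of the former, namely the cell $2$-representation $\mathbf{C}_{\mathcal{L}_e}$, and by Corollary~\ref{corn1} the number of the latter does not depend on the choice of left cell, hence not on $s$. I may therefore take $s$ to be a simple reflection lying on the bond of order $4$ (for $B_n$ the end node with $m_{st}=4$, for $F_4$ a node of the order-$4$ bond) and let $t$ be the adjacent reflection, so that $m_{st}=4$. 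Relying on the cell computations in these types (cf. \cite{KMMZ} and the computations in types $B_3,B_4,B_5$ acknowledged above), the indecomposable $1$-morphisms of $\cQ$ are then $\theta_e$, $\theta_s$ and $\theta_{sts}$, so that $\mathcal{H}=\{\theta_s,\theta_{sts}\}$, precisely as in type $I_2(4)$.

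Next, exactly as in the proof of Theorem~\ref{thm8-1}, I would invoke the general definition of Soergel bimodules to see that the $\mathcal{H}$-simple quotient of $\cQ$ in type $F_4$ or $B_n$ is biequivalent to the $\mathcal{H}$-simple quotient of $\cQ$ in type $I_2(4)$. Indeed, the products $\theta_s\theta_s$, $\theta_s\theta_{sts}$, $\theta_{sts}\theta_s$ and $\theta_{sts}\theta_{sts}$, together with all the relevant spaces of $2$-morphisms, are governed entirely by the dihedral parabolic subgroup $\langle s,t\rangle\cong I_2(4)$, so the structure constants coincide after passing to the $\mathcal{H}$-simple quotient. Since any simple transitive $2$-representation with apex $\mathcal{H}$ factors through the $\mathcal{H}$-simple quotient, this biequivalence gives a bijection between the apex-$\mathcal{H}$ simple transitive $2$-representations of $\cQ$ in type $F_4$ or $B_n$ and those in type $I_2(4)$. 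By Theorem~\ref{thm5} there are exactly two of the latter, namely $\mathbf{C}_{\mathcal{L}_s^{(\ccQ)}}$ (a cell $2$-representation) and $\Theta(\mathbf{C}_{\mathcal{L}_t^{({\ccS}_{\hspace{-1mm}\mathrm{sm}})}})$ (which is not). Adjoining the unique apex-$\{\mathbbm{1}_{\mathtt{i}}\}$ representation $\mathbf{C}_{\mathcal{L}_e}$, I conclude that $\cQ$ has exactly three equivalence classes of simple transitive $2$-representations, of which precisely two, namely $\mathbf{C}_{\mathcal{L}_e}$ and the cell $2$-representation $\mathbf{C}_{\mathcal{L}}$, are cell $2$-representations.

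The genuinely type-specific input, and the step I expect to be the main obstacle, is the cell combinatorics: one must justify that for the chosen $s$ the diagonal $\mathcal{H}$-cell consists of exactly the two elements $\theta_s,\theta_{sts}$ (so that $\mathcal{J}$ is regular but not strongly regular, consistently with Corollary~\ref{corn2}), which is where the explicit computations in types $F_4$ and $B_n$ enter. Once the indecomposables are identified, the biequivalence of the $\mathcal{H}$-simple quotients is a routine extension of the argument already used for Theorem~\ref{thm8-1}, the point being that the coinvariant algebras of $F_4$, $B_n$ and $I_2(4)$ differ but only the local dihedral data of $\langle s,t\rangle$ survives in the $\mathcal{H}$-simple quotient.
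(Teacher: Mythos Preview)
Your proposal is correct and follows essentially the same approach as the paper: identify the indecomposable $1$-morphisms of $\cQ$ as $\theta_e,\theta_s,\theta_{sts}$ via the cell combinatorics in \cite[Subsection~4.2]{KMMZ}, observe that the $\mathcal{H}$-simple quotient is biequivalent to that in type $I_2(4)$, and then apply Theorem~\ref{thm5}. Your additional preliminary step---invoking Corollary~\ref{corn1} to make the count independent of the choice of $s\in S$ before specializing to a simple reflection on the order-$4$ bond---is a worthwhile clarification that the paper leaves implicit (indeed, as the $B_3$ tables in the appendix show, for $s=s_3$ the diagonal $\mathcal{H}$-cell is $\{\theta_3,\theta_{32123}\}$ rather than $\{\theta_s,\theta_{sts}\}$, so some such reduction or choice is needed).
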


\begin{proof}
From \cite[Subsection~4.2]{KMMZ}, the indecomposable $1$-morphisms in 
$\cQ$ are $\theta_e$, $\theta_s$ and $\theta_{sts}$, where $s$ and $t$
are two simple reflections such that $(st)^4=e$. The general definition of
Soergel bimodules implies that the 
$\mathcal{H}$-simple quotient of $\cQ$ in types $F_4$ or $B_n$, for $n>2$, is biequivalent
to the $\mathcal{H}$-simple quotient of the $2$-category $\cQ$ in type $I_2(4)$.
Therefore the claim follows from Theorem~\ref{thm5} (see also \cite[Theorem~12]{MaMa}).
\end{proof}

\begin{corollary}\label{cor8-22}
If $(W,S)$ is of type $F_4$ or $B_n$, for $n>2$, then there are exactly two equivalence classes of 
cell $2$-representations of 
${{\cS}_{\hspace{-1mm}\mathrm{sm}}}$ with apex $\mathcal{J}$.
\end{corollary}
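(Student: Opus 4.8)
The plan is to read off the exact number of simple transitive $2$-representations with apex $\mathcal{J}$ from Theorem~\ref{thm8-2} via the bijection of Theorem~\ref{thmmain}, and then to separate the cell $2$-representations among them by a crude numerical invariant, namely their rank. First I would note that the cell $2$-representations of ${{\cS}_{\hspace{-1mm}\mathrm{sm}}}$ with apex $\mathcal{J}$ are exactly the $\mathbf{C}_{\mathcal{L}}$ with $\mathcal{L}$ a left cell contained in $\mathcal{J}$, and that each such $\mathbf{C}_{\mathcal{L}}$ is simple transitive.

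For the upper bound I would argue as follows. By Theorem~\ref{thm8-2} the $2$-category $\cQ$ has three simple transitive $2$-representations; one of them, $\mathbf{C}_{\mathcal{L}_e^{(\ccQ)}}$, has apex the identity cell $\{\theta_e\}$ (every non-identity $1$-morphism acts as zero), so exactly two of the three have apex $\mathcal{H}$. Applying Theorem~\ref{thmmain} to the pair $({{\cS}_{\hspace{-1mm}\mathrm{sm}}},\cQ)$---legitimate since $\mathcal{J}$ is the unique maximal two-sided cell of the fiat $2$-category ${{\cS}_{\hspace{-1mm}\mathrm{sm}}}$---then yields that ${{\cS}_{\hspace{-1mm}\mathrm{sm}}}$ has exactly two equivalence classes of simple transitive $2$-representations with apex $\mathcal{J}$. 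As every cell $2$-representation with apex $\mathcal{J}$ is simple transitive, this already bounds the number of equivalence classes of such cell $2$-representations above by two.

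For the lower bound I would use that the rank of $\mathbf{C}_{\mathcal{L}}$ equals $|\mathcal{L}|$, so that two left cells of different cardinality automatically give cell $2$-representations of different rank, hence inequivalent. It therefore suffices to exhibit two left cells in $\mathcal{J}$ of different size. Here I would invoke the description of $\mathcal{J}$ as the set of $\theta_w$ with $w$ having a unique reduced expression: such a $w$ is a reduced word admitting no braid move, and its left cell is determined by the last letter of this unique word, so counting is elementary. For instance, in type $B_3$ the left cell ending in the $4$-edge generator $s_1$ has four elements, while the two left cells ending in $s_2$ and $s_3$ have five; hence $\mathbf{C}_{\mathcal{L}^{(s_1)}}$ is inequivalent to $\mathbf{C}_{\mathcal{L}^{(s_2)}}$. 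Combined with the upper bound, this forces exactly two equivalence classes.

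The main obstacle is precisely this lower bound, more exactly the uniform verification that two left cells of different size occur for \emph{all} types $B_n$ with $n>2$ and for $F_4$, rather than only in the sample computation above. I expect this to follow from the asymmetry introduced by the $4$-labelled edge, which blocks the ``type $A$'' balancing that would equalise left-cell sizes; failing a clean uniform combinatorial argument, one may appeal to the explicit cell data available for these Coxeter systems. It is worth stressing that rank cannot settle the excluded case $I_2(4)=B_2$, where the two inequivalent cell $2$-representations both have rank three---this is exactly the delicate point handled by Theorem~\ref{thm5} and \cite{Zi1,MaMa}, and is why a mere rank count does not suffice there.
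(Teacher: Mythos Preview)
Your upper bound coincides with the paper's argument: Theorem~\ref{thm8-2} gives three equivalence classes of simple transitive $2$-representations of $\cQ$, one with trivial apex and two with apex $\mathcal{H}$; Theorem~\ref{thmmain} then yields exactly two equivalence classes of simple transitive $2$-representations of ${{\cS}_{\hspace{-1mm}\mathrm{sm}}}$ with apex $\mathcal{J}$, hence at most two cell $2$-representations with that apex.

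The difference is in the lower bound. The paper does not compute ranks or compare left-cell cardinalities; it simply invokes Theorem~\ref{thm2}\eqref{thm2.1}, which says that for types $F_4$ and $B_n$ (these are not $I_2(m)$ with $m>4$ even) \emph{every} simple transitive $2$-representation of ${{\cS}_{\hspace{-1mm}\mathrm{sm}}}$ is a cell $2$-representation. Hence both of the two simple transitive $2$-representations with apex $\mathcal{J}$ are cell $2$-representations, and the count is exactly two. This is uniform across all $n>2$ and $F_4$ and requires no case analysis.

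Your rank argument is a legitimate alternative and would work, but as you yourself note, it leaves an obligation: a uniform proof that $\mathcal{J}$ contains left cells of different sizes in all types $B_n$, $n>2$, and in $F_4$. This can indeed be extracted from the explicit description of $\mathcal{J}$ in \cite[Subsection~4.2]{KMMZ} (for $B_n$ the left cell of $\theta_{s_1}$, where $s_1$ is incident to the $4$-edge, is strictly smaller than the others), so the approach is salvageable. But you are working harder than necessary: Theorem~\ref{thm2} is already available and disposes of the lower bound in one line, without combinatorics. The trade-off is that your argument, once the cell-size check is completed, avoids appealing to the full classification in Theorem~\ref{thm2} and is in that sense more self-contained.
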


\begin{proof}
This follows from Theorems~\ref{thm2} and \ref{thm8-2} as in this case $\cQ$ has two
equivalence classes of simple 
transitive $2$-rep\-re\-sen\-tations with apex $\mathcal{H}$.
\end{proof}

\section{Application to Soergel bimodules, part II}\label{s4}

\subsection{Nice two-sided cells}\label{s4.1}

In this section, we let $(W,S)$ be a finite Coxeter system and $\cS$ the corresponding $2$-category 
of Soergel bimodules, see Subsection~\ref{s3.1} for details. Let $\mathcal{J}$ be a two-sided 
cell of $\cS$. We will say that $\mathcal{J}$ is {\em nice} provided that the following
conditions are satisfied.
\begin{itemize}
\item For any left cell $\mathcal{L}$ in $\mathcal{J}$ and any right cell  
$\mathcal{R}$ in $\mathcal{J}$, we have $|\mathcal{L}\cap\mathcal{R}|\leq 2$;
\item for any left cell $\mathcal{L}$ in $\mathcal{J}$ there are right cells 
$\mathcal{R}_1$ and $\mathcal{R}_2$ in $\mathcal{J}$ 
such that  we have $|\mathcal{L}\cap\mathcal{R}_1|=1$ and $|\mathcal{L}\cap\mathcal{R}_2|=2$;
\item $\mathcal{J}$ contains the longest element for some parabolic subgroup of $W$.
\end{itemize}

For example, from \cite[Subsection~4.2]{KMMZ} it follows that the two-sided cell of $\cS$ which
contains all simple reflections is nice if and only if $W$ is of type $F_4$ or $B_n$, for $n\geq 2$.

\begin{example}\label{exfc}
{\em  
Recall that, for a Coxeter group $(W,S)$, an element $w\in W$ is {\em fully commutative}
provided that any two reduced expressions of $w$ can be obtained from each other only using
commutativity of Coxeter generators, see \cite{St}. By \cite[Theorem~3.1.1]{GL}, in type $B_n$, 
the set of all fully commutative elements is a union of two-sided cells. Assume $W$ is of type
$B_n$ and let $S'\subset S$ be a non-empty subset such that any pair of elements in $S'$ commutes.
Let $w'_0$ be the product of all elements in $S'$ and $\mathcal{J}$ the two-sided cell containing $w'_0$.
Note that $w'_0$ is the longest element of the parabolic subgroup generated by $S'$.
With this in mind, the very last paragraph of \cite{GL}, just before the bibliography, 
implies that $\mathcal{J}$ is nice.
}
 \end{example}

\subsection{Simple transitive $2$-representations with nice apex}\label{s4.2}
The main result of this section is the following theorem.

\begin{theorem}\label{thm55}
Let $\mathbf{M}$ be a simple transitive $2$-representation of $\cS$ with a nice apex $\mathcal{J}$. 
Then $\mathbf{M}$ is equivalent to a cell $2$-representation. Moreover, there are exactly 
two equivalence classes of simple transitive $2$-representations of $\cS$ with apex $\mathcal{J}$.
\end{theorem}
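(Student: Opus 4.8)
The plan is to transport the question to the small $2$-category $\cA_{\mathcal{H}}$ attached to a diagonal $\mathcal{H}$-cell, using the three clauses in the definition of niceness for three different purposes. First I would pass to the fiat quotient of $\cS$ in which $\mathcal{J}$ is the unique maximal two-sided cell; this affects neither the simple transitive $2$-representations with apex $\mathcal{J}$ nor the cell $2$-representations attached to left cells of $\mathcal{J}$, and it places us in the setup of Subsection~\ref{s2.6}. Fixing a left cell $\mathcal{L}\subseteq\mathcal{J}$ and putting $\mathcal{H}=\mathcal{L}\cap\mathcal{L}^{\star}$, Theorem~\ref{thmmain} then supplies a bijection between the equivalence classes of simple transitive $2$-representations of $\cS$ with apex $\mathcal{J}$ and those of $\cA_{\mathcal{H}}$ with apex $\mathcal{H}$. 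It therefore suffices to show that $\cA_{\mathcal{H}}$ has exactly two such classes and that the two corresponding classes for $\cS$ are cell $2$-representations.

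Next I would compute $|\mathcal{H}|$. Since $\mathcal{L}^{\star}$ is a right cell in $\mathcal{J}$, the first clause of niceness gives $|\mathcal{H}|\le 2$; and if $|\mathcal{H}|=1$, then Corollary~\ref{corn2} would make $\mathcal{J}$ strongly regular, contradicting the second clause, which provides a right cell meeting $\mathcal{L}$ in two elements. Hence $|\mathcal{H}|=2$, so $\cA_{\mathcal{H}}$ has exactly three indecomposable $1$-morphisms: $\mathbbm{1}_{\mathtt{i}}$ and the two elements of $\mathcal{H}$. The hard part, which I expect to be the main obstacle, is to identify the $\mathcal{H}$-simple quotient of $\cA_{\mathcal{H}}$ with that of the $2$-category $\cQ$ of dihedral type $I_2(4)$. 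This is where the third clause enters: the presence in $\mathcal{J}$ of the longest element of a parabolic subgroup lets one write down an explicit dihedral ($B_2$) local model for the multiplication of the two elements of $\mathcal{H}$, in the spirit of the reduction carried out in the proof of Theorem~\ref{thm8-2}, while the two intersection-size hypotheses are exactly what pin the dihedral type down to $I_2(4)$ (the bound $|\mathcal{L}\cap\mathcal{R}|\le 2$ excludes $I_2(n)$ for $n\ge 6$, and the existence of intersections of both sizes $1$ and $2$ excludes the remaining dihedral types). Granting this biequivalence, Theorem~\ref{thm5} shows that $\cA_{\mathcal{H}}$ has exactly two simple transitive $2$-representations with apex $\mathcal{H}$, and hence $\cS$ has exactly two with apex $\mathcal{J}$.

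Finally I would show these two classes are cell $2$-representations by exhibiting at least two inequivalent cell $2$-representations of $\cS$ with apex $\mathcal{J}$; since each such cell $2$-representation is simple transitive with apex $\mathcal{J}$ and there are only two classes in total, this simultaneously forces every simple transitive $2$-representation with apex $\mathcal{J}$ to be a cell $2$-representation and yields the count. Suppose, for contradiction, that all $\mathbf{C}_{\mathcal{L}'}$, for left cells $\mathcal{L}'\subseteq\mathcal{J}$, were pairwise equivalent. Then, arguing as in the proof of Corollary~\ref{corn2} through \cite[Lemma~12]{MM1} and the resulting equality \eqref{eqeqn7}, I would obtain $|\mathcal{L}_1\cap\mathcal{R}|=|\mathcal{L}_2\cap\mathcal{R}|$ for all left cells $\mathcal{L}_1,\mathcal{L}_2$ and every right cell $\mathcal{R}$; applying the involution $\star$, which preserves the cardinalities of such intersections, also gives $|\mathcal{L}\cap\mathcal{R}_1|=|\mathcal{L}\cap\mathcal{R}_2|$ for all right cells $\mathcal{R}_1,\mathcal{R}_2$. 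Thus $|\mathcal{L}'\cap\mathcal{R}|$ would be constant as $\mathcal{L}'$ and $\mathcal{R}$ range over the left and right cells of $\mathcal{J}$, contradicting the second clause of niceness, which exhibits one such value equal to $1$ and another equal to $2$. Hence at least two inequivalent cell $2$-representations exist, which completes the argument.
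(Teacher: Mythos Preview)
Your reduction to $\cA_{\mathcal{H}}$, the computation $|\mathcal{H}|=2$, and the final argument producing two inequivalent cell $2$-representations all match the paper. The gap is your step~4: the assertion that the $\mathcal{H}$-simple quotient of $\cA_{\mathcal{H}}$ is biequivalent to that of $\cQ$ in type $I_2(4)$. Your justification is purely combinatorial (intersection sizes constrain a hypothetical dihedral type) and says nothing about $2$-morphisms; even if the Grothendieck rings agreed, that would not give a biequivalence. The analogy with Theorem~\ref{thm8-2} is misleading: there $\mathcal{J}$ is the cell of simple reflections, so $\mathcal{H}=\{\theta_s,\theta_{sts}\}$ with $s,t$ generating a genuine $I_2(4)$ parabolic, and the biequivalence follows because the relevant Soergel bimodules literally live over that parabolic. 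For a general nice $\mathcal{J}$ the two elements of $\mathcal{H}$ are $\theta_{w'_0}$ and some unspecified $\theta_w$, and there is no evident reason their $2$-morphism spaces should match those of the dihedral model.

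The paper does not attempt this identification. Instead it uses the third niceness clause in a different way: since $w'_0$ is the longest element of a parabolic $W'$, one can factor $\theta_{w'_0}\cong\theta^{\mathrm{out}}\theta^{\mathrm{on}}$ inside the $2$-category of \emph{singular} Soergel bimodules, with $\theta^{\mathrm{on}}\theta^{\mathrm{out}}$ a multiple of $\mathbbm{1}_{\mathtt{j}}$. Via \cite[Corollary~12]{MMMT} this transports the problem to a $2$-category $\cA^{\checkmark}$ generated by $\mathbbm{1}_{\mathtt{j}}$ and a single further $1$-morphism $\theta$. One then computes the matrix $[\theta]$ in the cell $2$-representation of $\cA^{\checkmark}$ directly, using semisimplicity from \cite[Theorem~2]{KMMZ} and the transpose relation of \cite[Lemma~8]{AM} to force the off-diagonal entries to be $1$, and a rationality argument on the positively based algebra $\mathbb{C}[x]/(x^2-ax-1)$ to force $a=0$. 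This pins down the only two possible based modules, each realized by a unique simple transitive $2$-representation. So rather than identifying $\cA_{\mathcal{H}}$ with a known dihedral $2$-category, the paper replaces it by a yet smaller $2$-category whose simple transitive $2$-representations can be classified from scratch.
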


\begin{proof}
Let $\cC$ be the minimal quotient of $\cS$
in which $1$-morphisms of $\mathcal{J}$ survive (the $\mathcal{J}$-simp\-le quotient in the
terminology of \cite{MM1}). Let $w'_0$ be an element of $\mathcal{J}$ which is the longest
element of some parabolic subgroup of $W$, say $W'$. Let $\mathcal{L}$ and $\mathcal{R}$
be the left and the right cells containing $w'_0$, respectively. Set
$\mathcal{H}:=\mathcal{L}\cap \mathcal{R}$. Let $\cA$ be the $2$-subcategory of $\cC$
generated by $\theta_e$ and the elements in $\mathcal{H}$.
 
Note that $\mathcal{R}=\mathcal{L}^{\star}$. Therefore, if we assume 
$|\mathcal{H}|=1$, from Corollary~\ref{corn2} we deduce that $\mathcal{J}$ is strongly regular.
This, however, contradicts our assumption that $\mathcal{J}$ is nice.
Therefore $|\mathcal{H}|=1$ is not possible.

The above shows that $|\mathcal{H}|>1$, that is $|\mathcal{H}|=2$ as $\mathcal{J}$ is nice.
Let $\theta_w$ be the element in $\mathcal{H}$ different from $\theta_{w'_0}$.
Let $\cS\cS$ be the $2$-category of {\em singular Soergel bimodules}, see \cite[Subsection~7.2]{MM1}. 
The objects of  $\cS\cS$ correspond to parabolic subgroups of $W$. If  $\mathtt{i}$ is the object
corresponding to the trivial subgroup, then the full $2$-subcategory of $\cS\cS$
with object $\mathtt{i}$
is naturally identified with $\cS$. We
let $\mathtt{j}$ be the object corresponding to $W'$. Then there are $1$-morphisms
$\theta^{\mathrm{on}}\in \cS\cS(\mathtt{i},\mathtt{j})$ and
$\theta^{\mathrm{out}}\in \cS\cS(\mathtt{j},\mathtt{i})$ such that 
\begin{displaymath}
\theta_{w'_0}\cong\theta^{\mathrm{out}}\theta^{\mathrm{on}}\quad\text{ and }\quad
\mathbbm{1}_{\mathtt{j}}^{\oplus\mathbf{l}(w'_0)}\cong\theta^{\mathrm{on}}\theta^{\mathrm{out}},
\end{displaymath}
where $\mathbf{l}$ denotes the length function. We denote by $\theta$ the $1$-morphism in
$\cS\cS(\mathtt{j},\mathtt{j})$ such that 
\begin{displaymath}
\theta_w\cong \theta^{\mathrm{out}}\theta \theta^{\mathrm{on}}.
\end{displaymath}
Let $\cS\cS_{\mathtt{j}}$ be the full $2$-subcategory of $\cS\cS$
with object $\mathtt{j}$. Let $\cA^\checkmark$ be the $2$-subcategory of 
$\cS\cS_{\mathtt{j}}$ given by $\mathbbm{1}_{\mathtt{j}}$
and $\theta$. Combining Theorem~\ref{thmmain} with \cite[Corollary~12]{MMMT}, we obtain that there
is a bijection between simple transitive $2$-representations of $\cS$ with apex $\mathcal{J}$
and simple transitive $2$-representations of $\cA^\checkmark$.

Next we claim that $\cS$ has at least two equivalence classes of cell $2$-representations with
apex $\mathcal{J}$. This is similar to the argument in the proof of 
Corollary~\ref{corn2}. As $\mathcal{J}$ is nice, 
there are left cells 
$\mathcal{L}_1$ and $\mathcal{L}_2$ in $\mathcal{J}$ 
such that  $|\mathcal{L}_1\cap\mathcal{R}|=1$ and $|\mathcal{L}_2\cap\mathcal{R}|=2$.
In the proof of Corollary~\ref{corn2}, we show that an equivalence between 
$\mathbf{C}_{\mathcal{L}_1}$ and $\mathbf{C}_{\mathcal{L}_2}$ would
induce a bijection between $\mathcal{L}_1\cap\mathcal{R}$ and $\mathcal{L}_2\cap\mathcal{R}$. 
Since these have different cardinalities in our
situation, we deduce that $\mathbf{C}_{\mathcal{L}_1}$
and $\mathbf{C}_{\mathcal{L}_2}$ are not equivalent.

To complete the proof of the proposition, it is now sufficient to show that 
$\cA^\checkmark$ has at most two equivalence classes of simple transitive $2$-representations.
From the previous paragraph, we know that it has at least two, moreover, one of them has
rank one and another one has rank two.
Note that $\mathbbm{1}_{\mathtt{j}}$ and $\theta$ belong to the same two-sided cell of $\cA^\checkmark$
for otherwise there would exist a $2$-representation of $\cA^\checkmark$ which annihilates
$\theta$ but not $\mathbbm{1}_{\mathtt{j}}$. This would give, via \cite[Corollary~12]{MMMT},
a $2$-representation of $\cC$ which annihilates $\theta_{w}$ but not $\theta_{w'_0}$,
a contradiction. Consequently, $\{\mathbbm{1}_{\mathtt{j}},\theta\}$ is both a left,
a right and a two-sided cell.

Consider the matrix $[\theta]$ in the cell $2$-representation $\mathbf{C}$ of $\cA^\checkmark$. 
Let $P_{\mathbbm{1}_{\mathtt{j}}}$ and $P_{\theta}$ be the indecomposable projective
in $\overline{\mathbf{C}}(\mathtt{j})$ corresponding to $\mathbbm{1}_{\mathtt{j}}$ and $\theta$,
respectively. Let $L_{\mathbbm{1}_{\mathtt{j}}}$ and $L_{\theta}$ be 
their respective simple tops.  With respect to the basis
$P_{\mathbbm{1}_{\mathtt{j}}}$, $P_{\theta}$, the matrix $[\theta]$ has the form
\begin{displaymath}
\left(\begin{array}{cc}0&b\\1&a\end{array}\right),
\end{displaymath}
for some non-negative integers $a$ and $b$. As cell $2$-representation is simple transitive,
we must have $b>0$. As $\mathbbm{1}_{\mathtt{j}}$ acts on $\mathbf{C}(\mathtt{j})$ as the identity functor,
from \cite[Theorem~2]{KMMZ} it follows that $\overline{\mathbf{C}}(\mathtt{j})$ is
semi-simple. We also have the matrix bookkeeping the composition multiplicities 
in $\theta\, L_{\mathbbm{1}_{\mathtt{j}}}$ and $\theta\, L_{\theta}$.
By \cite[Lemma~8]{AM}, this matrix is
\begin{displaymath}
\left(\begin{array}{cc}0&1\\b&a\end{array}\right).
\end{displaymath}
We know that, on the one hand, $\theta\, P_{\mathbbm{1}_{\mathtt{j}}}\cong P_{\theta}$ but, 
on the other hand $L_{\theta}$ has composition multiplicity $b$ in $\theta\, L_{\mathbbm{1}_{\mathtt{j}}}$.
As $\overline{\mathbf{C}}(\mathtt{j})$ is semisimple, we have $P_{\mathbbm{1}_{\mathtt{j}}}\cong
L_{\mathbbm{1}_{\mathtt{j}}}$ and $P_{\theta}\cong L_{\theta}$. Therefore $b=1$.

Let $A:=\mathbb{C}[x]/(x^2-ax-1)$. Then $A$ is positively based 
with basis $1,x$ (see Subsection~\ref{s2.51}). It has a unique two-sided cell and the special module for this two-sided
cell is the one-dimensional $A$-module on which $x$ acts via $\frac{a+\sqrt{a^2+4}}{2}$.
Note that $\frac{a+\sqrt{a^2+4}}{2}$ is rational (in case of a non-negative integer $a$) 
if and only if $a=0$. Therefore, in case $A$ has a one-dimensional module on which 
$x$ acts as an integer, we must have $a=0$. Consequently, we have 
\begin{displaymath}
[\theta]=\left(\begin{array}{cc}0&1\\1&0\end{array}\right).
\end{displaymath}
In case $a=0$, the algebra $A$ has, up to equivalence, exactly two based modules in which $1+x$
acts as a matrix with positive integral coefficients. These are the modules in which the matrix of
$x$ is one of the following two matrices:
\begin{equation}\label{eqeqnn5}
\left(\begin{array}{cc}0&1\\1&0\end{array}\right)\quad\text{ or }\quad(1). 
\end{equation}
Now we can use the usual arguments (like in \cite[Subsection~5.10]{MaMa}) to show that the first matrix
in \eqref{eqeqnn5} corresponds to the cell $2$-representation of $\cA^\checkmark$. Indeed, 
let $\mathbf{K}$ be a simple transitive $2$-representation of $\cA^\checkmark$ in which
$[\theta]$ is the first matrix in \eqref{eqeqnn5}. Sending $\mathbbm{1}_{\mathtt{j}}$ to a simple module 
$L$ in $\overline{\mathbf{K}}(\mathtt{j})$ induces a homomorphism from  the cell $2$-representation of $\cA^\checkmark$
to $\mathbf{K}$ which is an equivalence as $\overline{\mathbf{K}}(\mathtt{j})$ is semisimple
and $L=\mathbbm{1}_{\mathtt{j}}(L)$ and $\theta(L)$ are indecomposable projectives which generate
$\overline{\mathbf{K}}(\mathtt{j})$. This shows that the cell $2$-representation of $\cA^\checkmark$
is the only simple transitive $2$-representations of $\cA^\checkmark$ for which $[\theta]$
is the first matrix in \eqref{eqeqnn5}.

Clearly, the second matrix in \eqref{eqeqnn5} corresponds to a 
unique simple transitive $2$-rep\-re\-sen\-ta\-ti\-on of $\cA^\checkmark$ (where both $\mathbbm{1}_{\mathtt{j}}$
and $\theta$ act as the identity functors and only scalar natural transformations survive). 
Therefore we obtain that $\cA^\checkmark$ has at most two equivalence classes of simple transitive 
$2$-representations, as required. The claim of the theorem follows.
\end{proof}

\subsection{Large quotients of Soergel bimodules}\label{s4.3}

The $2$-category $\cS$ has a unique maximal two-sided cell, namely $\{\theta_{w_0}\}$,
where $w_0$ is the longest element of $W$. Furthermore, if we remove this two-sided cell,
in what remains there is again a unique maximal two-sided cell. In this subsection we
denote this two-sided cell by $\mathcal{J}'$. Under the map $w\mapsto ww_0$, which is a bijection
reversing the two-sided order, the cell $\mathcal{J}'$ corresponds to the two-sided
cell $\mathcal{J}$ of $W$ containing all simple reflections, see \cite[Subsection~4.1]{KMMZ}.
The minimal quotient of $\cS$ in which elements of $\mathcal{J}'$ survive will be called
the {\em large quotient} of $\cS$ and denoted $\cS^{\checkmark}$. Our terminology is
motivated by the contrast to the terminology in \cite{KMMZ}.
From \cite[Subsection~4.1]{KMMZ} it follows that, in case $(W,S)$ is simply laced, the 
unique maximal two-sided cell of $\cS^{\checkmark}$ is strongly regular. Therefore, in the
simply laced case,
every faithful simple transitive $2$-representation of $\cS^{\checkmark}$ is equivalent
to a cell $2$-representation by \cite[Theorem~18]{MM5}.

\begin{corollary}\label{cor57}
Assume that $W$ is of type $B_n$, for $n\geq 2$. Then every faithful simple transitive
$2$-representation of $\cS^{\checkmark}$ is equivalent to a cell $2$-representation. Moreover,
there are exactly  two equivalence classes of such $2$-representations.
\end{corollary}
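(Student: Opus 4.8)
The plan is to deduce the statement from Theorem~\ref{thm55} by showing that the apex $\mathcal{J}'$ is a nice two-sided cell of $\cS$. First I would make the reduction to $\cS$ explicit. Since $\cS^{\checkmark}$ is by construction the minimal quotient of $\cS$ in which $\mathcal{J}'$ survives, a simple transitive $2$-representation of $\cS^{\checkmark}$ is faithful exactly when the cells strictly above $\mathcal{J}'$ (that is, $\{\theta_{w_0}\}$) are annihilated while $\mathcal{J}'$ is not, i.e.\ exactly when its apex, computed in $\cS$, equals $\mathcal{J}'$. Hence faithful simple transitive $2$-representations of $\cS^{\checkmark}$ correspond bijectively to simple transitive $2$-representations of $\cS$ with apex $\mathcal{J}'$, and this correspondence identifies cell $2$-representations on the two sides. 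It therefore suffices to prove that $\cS$ has exactly two equivalence classes of simple transitive $2$-representations with apex $\mathcal{J}'$ and that each is a cell $2$-representation.

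The core of the argument is the verification that $\mathcal{J}'$ is nice; once this is established, Theorem~\ref{thm55} applies verbatim with $\mathcal{J}$ replaced by $\mathcal{J}'$. For the two conditions governing the intersection numbers $|\mathcal{L}\cap\mathcal{R}|$, I would use the order-reversing bijection $w\mapsto ww_0$ of Subsection~\ref{s4.3}, which carries $\mathcal{J}'$ to the two-sided cell $\mathcal{J}$ of all simple reflections. In type $B_n$ the longest element $w_0$ is central, so right translation by $w_0$ agrees with left translation by $w_0$; consequently this bijection sends left cells to left cells and right cells to right cells and preserves every cardinality $|\mathcal{L}\cap\mathcal{R}|$. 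As the simple reflection cell $\mathcal{J}$ is nice in type $B_n$ (the remark preceding Example~\ref{exfc}), the first two conditions of niceness transfer to $\mathcal{J}'$.

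The delicate point is the third condition, that $\mathcal{J}'$ contain the longest element of some parabolic subgroup, and here I would produce an explicit witness. Label the generators so that the order-$4$ edge joins $s_{n-1}$ and $s_n$, and take the maximal parabolic $W'=\langle s_2,\dots,s_n\rangle\cong B_{n-1}$. A length count gives $\ell(w_0)-\ell(w_0^{W'})=n^2-(n-1)^2=2n-1$, and length-additivity of the factorization $w_0=w_0^{W'}\cdot({}^{W'}w_0)$ identifies $w_0^{W'}w_0$ with the element $u=s_1s_2\cdots s_{n-1}s_ns_{n-1}\cdots s_2s_1$ of length $2n-1$. The word $u$ admits no commutation move (consecutive letters are always graph-adjacent) and no braid move (the only subword touching the order-$4$ edge is $s_{n-1}s_ns_{n-1}$, which is too short for the length-four relation, and no subword $s_is_{i+1}s_i$ occurs), so $u$ has a unique reduced expression and hence lies in $\mathcal{J}$ by the characterization in \cite[Proposition~4]{KMMZ}. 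Therefore $w_0^{W'}=u\,w_0\in\mathcal{J}'$, exhibiting a parabolic longest element in $\mathcal{J}'$ and finishing the verification of niceness.

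With $\mathcal{J}'$ shown to be nice, Theorem~\ref{thm55} gives at once that every simple transitive $2$-representation of $\cS$ with apex $\mathcal{J}'$ is equivalent to a cell $2$-representation and that there are exactly two equivalence classes; transporting these along the correspondence of the first paragraph yields the claim for $\cS^{\checkmark}$. I expect the main obstacle to be precisely the third niceness condition: one must locate a parabolic longest element inside the \emph{high} cell $\mathcal{J}'$ and confirm, through the unique-reduced-expression criterion, that $w_0^{W'}w_0$ genuinely falls into the simple reflection cell $\mathcal{J}$. By contrast, the transfer of the intersection-cardinality conditions is essentially formal once the centrality of $w_0$ is invoked, and the final appeal to Theorem~\ref{thm55} is then automatic.
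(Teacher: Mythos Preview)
Your proposal is correct and follows essentially the same route as the paper: verify that $\mathcal{J}'$ is nice (transferring the intersection conditions from $\mathcal{J}$ via $w\mapsto ww_0$, and exhibiting the longest element of the type $B_{n-1}$ parabolic as a parabolic longest element inside $\mathcal{J}'$), then invoke Theorem~\ref{thm55}. The only cosmetic differences are that the paper cites \cite[Remark~3.3]{KaLu} or \cite[Corollary~6.2.10]{BB} for the cell-preservation of $w\mapsto ww_0$ rather than appealing to centrality of $w_0$ (note that centrality alone does not literally prove this---you still need one half of the cited fact), and cites \cite[Subsection~4.2]{KMMZ} for membership of the length $2n-1$ element in $\mathcal{J}$ rather than checking the unique-reduced-expression criterion by hand.
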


\begin{proof}
We claim that $\mathcal{J}'$ is nice. By \cite[Remark~3.3]{KaLu}
or \cite[Corollary~6.2.10]{BB}, the map
$w\mapsto ww_0$ sends left cells to left cells, right cells to right cells, and
two-sided cells to two-sided cells. Hence the first two conditions from the definition
of a nice two-sided cell for $\mathcal{J}'$ follow from the corresponding conditions for $\mathcal{J}$,
cf. Subsection~\ref{s4.1}. It remains to prove that $\mathcal{J}'$ contains the longest element
in some parabolic subgroup. 

Assume that the Coxeter diagram of $W$ is 
\begin{displaymath}
\xymatrix{1\ar@{-}[r]^4&2\ar@{-}[r]&3\ar@{-}[r]&\dots\ar@{-}[r]&n}. 
\end{displaymath}
We identify simple reflections with the vertices of this diagram. Let $W'$ be the parabolic subgroup
generated by the simple reflections $1$, $2$,\dots, $n-1$, and $w'_0$ be the longest element in $W'$.
Then 
\begin{gather*}
w_0= (n(n-1)(n-2)\dots 212\dots (n-1)n)\dots(32123)(212)(1), \\
w'_0= ((n-1)(n-2)(n-3)\dots 212\dots (n-2)(n-1))\dots(32123)(212)(1),\\
w_0=(n(n-1)(n-2)\dots 212\dots (n-1)n)w'_0.
\end{gather*}
As the element $n(n-1)(n-2)\dots 212\dots (n-1)n$ belongs to $\mathcal{J}$, see
\cite[Subsection~4.2]{KMMZ}, it follows that $w'_0\in \mathcal{J}'$.

The above proves that $\mathcal{J}'$ is nice. Now the claim
follows directly from Theorem~\ref{thm55}.
\end{proof}

\begin{remark}\label{rem58}
{\em
The observations in this remark are based on the lists 
of two-sided cells in types $B_3$ and $B_4$ which were computed by Tobias Kildetoft 
using GAP and can be found in Section~\ref{sappendix} below.

If $W$ is of type $B_3$, then $\cS$ has only two two-sided cells which are not strongly regular,
namely, the two-sided cell $\mathcal{J}'$ defined above and the two-sided cell  $\mathcal{J}$ 
which contains all simple reflections. Simple transitive $2$-representations of $\cS$ with 
strongly regular apex are equivalent to  cell $2$-representations by \cite[Theorem~18]{MM5}. 
Simple transitive $2$-representations of $\cS$ with apex $\mathcal{J}$ 
are equivalent to cell $2$-representations by \cite[Theorem~37]{KMMZ}. 
Simple transitive $2$-representations of $\cS$
with apex $\mathcal{J}'$ are equivalent to cell $2$-representations by Corollary~\ref{cor57}.
This completes the classification of simple transitive $2$-representations of $\cS$ in type $B_3$.
In particular, all simple transitive $2$-representations of $\cS$ in type $B_3$ are equivalent
to cell $2$-representations.

Similarly, one can check that all non-strongly regular two-sided cells of type $B_4$ are nice.
In particular, it follows that all simple transitive $2$-representations of $\cS$ in type $B_4$ are equivalent
to cell $2$-representations.

In type $B_5$ there is a unique two-sided cell 
(for which the value of Lusztig's $\mathbf{a}$-function 
from \cite{Lu} is $11$) which is neither strongly regular nor nice. The classification of 
simple transitive $2$-representations of $\cS$ in type $B_5$ with apex being this two-sided cell
seems to be the smallest open case for the moment.
} 
\end{remark}

\begin{remark}\label{rem59}
{\em
The maximal two-sided cell for $\cS^{\checkmark}$ in Coxeter types $F_4$, $H_3$ and  $H_4$ is not nice
and hence the above approach does not work in these cases. 
} 
\end{remark}

\section{Appendix: Cell structure in types $B_3$ and $B_4$}\label{sappendix}

This appendix contains lists of Kazhdan-Lusztig cells in types $B_3$ and $B_4$.
The lists were computed  by Tobias Kildetoft using the GAP package.
As we could not find any of these lists anywhere, we provide
them here. We thank Tobias Kildetoft for his help with these lists.

\subsection{Cells in type $B_3$}\label{sappendix.1}

We consider the Coxeter graph 
\begin{displaymath}
\xymatrix{1\ar@{-}[rr]^4&&2\ar@{-}[rr]&&3} 
\end{displaymath}
of type $B_3$ and the corresponding Coxeter group $(W,S)$, where $S=\{s_1,s_2,s_3\}$. 
Each element $w\in W$ will be given by one of its reduced expressions.
For simplicity, we denote the simple reflection $s_i$  by $i$.
For example, $1213$ denotes the element $s_1s_2s_1s_3$.

Boxes correspond to two-sided cells, given  with respect to the increasing
two-sided order (which is linear in this case). Rows are right cells and columns are left cells.
For each two-sided cell we also give the value of Lusztig's $\mathbf{a}$-function
on this cell, cf. \cite{Lu}. Elements given in bold are longest elements of parabolic subgroups.

The first two-sided cell. Its $\mathbf{a}$-value is $0$.
\begin{displaymath}
\begin{array}{|c|}
\hline
\mathbf{e}\\
\hline
\end{array} 
\end{displaymath}

The second two-sided cell. Its $\mathbf{a}$-value is $1$.
\begin{displaymath}
\begin{array}{|c|c|c|}
\hline
\mathbf{1}&12&123\\
121&&\\
\hline
21&\mathbf{2 }&23\\
&212&2123\\
\hline
321&32&\mathbf{3}\\
&3212&32123\\
\hline
\end{array} 
\end{displaymath}

The third two-sided cell. Its $\mathbf{a}$-value is $2$.
\begin{displaymath}
\begin{array}{|c|c|c|}
\hline
121321&1213&12132\\
\hline
1321&\mathbf{13}&132\\
\hline
21321&213&2132\\
\hline
\end{array} 
\end{displaymath}

The fourth two-sided cell. Its $\mathbf{a}$-value is $3$.
\begin{displaymath}
\begin{array}{|c|c|c|}
\hline
12321&1232&123212\\
\hline
2321&\mathbf{232}&23212\\
\hline
212321&21232&2123212\\
\hline
\end{array} 
\end{displaymath}

The fifth two-sided. Its $\mathbf{a}$-value is $4$.
\begin{displaymath}
\begin{array}{|c|c|c|}
\hline
\mathbf{1212}&12123&121232\\
12123212&1212321&\\
\hline
13212&132123&1232123\\
1213212&12132123&\\
\hline
213212&2132123&232123\\
&&21232123\\
\hline
\end{array} 
\end{displaymath}

The sixth two-sided. Its $\mathbf{a}$-value is $9$.
\begin{displaymath}
\begin{array}{|c|}
\hline
\mathbf{121232123}\\
\hline
\end{array} 
\end{displaymath}

\subsection{Cells in type $B_4$}\label{sappendix.2}

Here we use the same conventions as in Subsection~\ref{sappendix.1}
and consider the Coxeter graph 
\begin{displaymath}
\xymatrix{1\ar@{-}[rr]^4&&2\ar@{-}[rr]&&3\ar@{-}[rr]&&4} 
\end{displaymath}
Two-sided cells are again listed  with respect to the increasing
two-sided order. This order is linear with one exception that the two two-sided
cells with the same $\mathbf{a}$-value are not comparable.

The first two-sided cell. Its $\mathbf{a}$-value is $0$.
\begin{displaymath}
\begin{array}{|c|}
\hline
\mathbf{e}\\
\hline
\end{array} 
\end{displaymath}

The second two-sided cell. Its $\mathbf{a}$-value is $1$.
\begin{displaymath}
\begin{array}{|c|c|c|c|}
\hline
\mathbf{1}&12&123&1234\\
121&&&\\
\hline
21&\mathbf{2 }&23&234\\
&212&2123&21234\\
\hline
321&32&\mathbf{3}&34\\
&3212&32123&321234\\
\hline
4321&432&43&\mathbf{4}\\
&43212&432123&4321234\\
\hline
\end{array} 
\end{displaymath}

The third two-sided cell. Its $\mathbf{a}$-value is $2$.
\begin{displaymath}
\begin{array}{|c|c|c|c|c|c|}
\hline
3243&324&321432&3214&32143&3214321\\
321243&32124&&&&\\
\hline
243&\mathbf{24}&21432&214&2143&214321\\
21243&2124&&&&\\
\hline
213243&21324&2132&2134&213&21321\\
&&21321432&213214&2132143&213214321\\
\hline
1243&124&1432&\mathbf{14}&143&14321\\
&&121432&1214&12143&1214321\\
\hline
13243&1324&132&134&\mathbf{13}&1321\\
&&1321432&13214&132143&13214321\\
\hline
1213243&121324&12132&12134&1213&121321\\
&&121321432&1213214&12132143&1213214321\\
\hline
\end{array} 
\end{displaymath}

The fourth two-sided cell. Its $\mathbf{a}$-value is $3$.
{\tiny
\begin{displaymath}
\begin{array}{|c|c|c|c|c|c|c|c|}
\hline
32123432123&3212343&321243212&3212343212&32123432&3212432&321234321&32124321\\
\hline
3432123&\mathbf{343}&3243212&343212&3432&32432&34321&324321\\
\hline
212321243&2123243&2123212&21232124&212324&21232&2123214&212321\\
\hline
2123432123&212343&21243212&212343212&2123432&212432&21234321&2124321\\
\hline
23432123&2343&243212&2343212&23432&2432&234321&24321\\
\hline
2321243&23243&23212&232124&2324&\mathbf{232}&23214&2321\\
\hline
123432123&12343&1243212&12343212&123432&12432&1234321&124321\\
\hline
12321243&123243&123212&1232124&12324&1232&123214&12321\\
\hline
\end{array} 
\end{displaymath}
}

The fifth two-sided cell. Its $\mathbf{a}$-value is $4$.
\begin{displaymath}
\begin{array}{|c|c|c|c|c|c|}
\hline
212321243212&2123212432&212321432&21232143&2123214321&21232124321\\
\hline
2321243212&23212432&2321432&232143&23214321&232124321\\
\hline
213243212&2132432&213432&21343&2134321&21324321\\
\hline
13243212&132432&13432&\mathbf{1343}&134321&1324321\\
\hline
1213243212&12132432&1213432&121343&12134321&121324321\\
\hline
12321243212&123212432&12321432&1232143&123214321&1232124321\\
\hline
\end{array} 
\end{displaymath}

The sixth two-sided cell. Its $\mathbf{a}$-value is $4$.
\begin{displaymath}
\begin{array}{|c|c|c|c|c|c|}
\hline
321234321234&32124321234&3212432123&3214321234&321432123&32143212\\
34321234&324321234&32432123&&&\\
\hline
21234321234&2124321234&212432123&214321234&21432123&2143212\\
234321234&24321234&2432123&&&\\
\hline
2123212343&212321234&21232123&21321234&2132123&213212\\
23212343&2321234&232123&&&\\
\hline
1234321234&124321234&12432123&14321234&1432123&143212\\
&&&1214321234&121432123&12143212\\
\hline
123212343&12321234&1232123&1321234&132123&13212\\
&&&121321234&12132123&1213212\\
\hline
12123243&1212324&121232&121234&12123&\mathbf{1212}\\
&&&12123214&1212321&12123212\\
\hline
\end{array} 
\end{displaymath}

The seventh two-sided cell. Its $\mathbf{a}$-value is $5$.
{\tiny
\begin{displaymath}
\begin{array}{|c|c|c|c|c|c|c|c|}
\hline
21324321234&2132432123&2134321234&213432123&21343212&213214321234&21321432123&2132143212\\
\hline
2132123432&213212432&213212343&21321243&2132124&21321234321&2132124321&21321243212\\
\hline
1324321234&132432123&134321234&13432123&1343212&13214321234&1321432123&132143212\\
\hline
132123432&13212432&13212343&1321243&132124&1321234321&132124321&1321243212\\
\hline
12123432&1212432&1212343&121243&\mathbf{12124}&121234321&12124321&121243212\\
\hline
121324321234&12132432123&12134321234&1213432123&121343212&1213214321234&121321432123&12132143212\\
\hline
12132123432&1213212432&1213212343&121321243&12132124&121321234321&12132124321&121321243212\\
\hline
1212321432&12123212432&121232143&1212321243&121232124&12123214321&121232124321&1212321243212\\
\hline
\end{array} 
\end{displaymath}
}

The eights two-sided cell. Its $\mathbf{a}$-value is $6$.
{\tiny
\begin{displaymath}
\begin{array}{|c|c|c|c|c|c|}
\hline
21232123432123&2123212343212&21232123432&2132123432123&213212343212&212321234321\\
232123432123&23212343212&232123432&&&2321234321\\
\hline
2123212432123&21232124321234&212324321234&212321432123&21232143212&2123214321234\\
21232432123&2123243212&21232432&&&212324321\\
\hline
232432123&232124321234&2324321234&2321432123&232143212&23214321234\\
23212432123&23243212&\mathbf{232432}&&&2324321\\
\hline
1232123432123&123212343212&1232123432&132123432123&13212343212&12321234321\\
&&&12132123432123&1213212343212&\\
\hline
121232432123&12123243212&121232432&12123432123&1212343212&1212324321\\
&&&1212321432123&121232143212&\\
\hline
123212432123&1232124321234&12324321234&12321432123&1232143212&123214321234\\
1232432123&123243212&1232432&&&12324321\\
\hline
\end{array} 
\end{displaymath}
}

The ninth two-sided cell. Its $\mathbf{a}$-value is $9$.
\begin{displaymath}
\begin{array}{|c|c|c|c|}
\hline
212321234321234&21321234321234&2132124321234&213212432123\\
2321234321234&&&\\
\hline
12321234321234&1321234321234&132124321234&13212432123\\
&121321234321234&12132124321234&1213212432123\\
\hline
1212324321234&121234321234&12124321234&1212432123\\
&12123214321234&121232124321234&12123212432123\\
\hline
121232123432&12123212343&1212321234&\mathbf{121232123}\\
&1212321234321&12123212343212&121232123432123\\
\hline
\end{array} 
\end{displaymath}

The tenth two-sided cell. Its $\mathbf{a}$-value is $16$.
\begin{displaymath}
\begin{array}{|c|}
\hline
\mathbf{1212321234321234}\\
\hline
\end{array} 
\end{displaymath}

\vspace{5mm}

\noindent
M.~M.: Center for Mathematical Analysis, Geometry, and Dynamical Systems, Departamento de Matem{\'a}tica, 
Instituto Superior T{\'e}cnico, 1049-001 Lisboa, PORTUGAL \& Departamento de Matem{\'a}tica, FCT, 
Universidade do Algarve, Campus de Gambelas, 8005-139 Faro, PORTUGAL, email: {\tt mmackaay\symbol{64}ualg.pt}

\noindent
Vo.~Ma.: Department of Mathematics, Uppsala University, Box. 480,
SE-75106, Uppsala, SWEDEN, email: {\tt mazor\symbol{64}math.uu.se}

\noindent
Va.~Mi.: School of Mathematics, University of East Anglia,
Norwich NR4 7TJ, UK,  {\tt v.miemietz\symbol{64}uea.ac.uk}

\noindent
X.~Z.: Department of Mathematics, Uppsala University, Box. 480,
SE-75106, Uppsala, SWEDEN, email: {\tt xiaoting.zhang\symbol{64}math.uu.se}

\end{document}